\documentclass[11pt]{amsart}

\usepackage{amsmath,amsthm,verbatim,amssymb,amsfonts,amscd, graphicx, thm-restate}
\usepackage{graphics}
\usepackage{tikz-cd}
\usepackage{pinlabel}
\usepackage{xcolor}
\usepackage{stackrel}
\usepackage{url}
\usepackage{hyperref}

\usepackage[margin=1in]{geometry}
\newtheorem{theorem}{Theorem}[section]
\newtheorem{corollary}[theorem]{Corollary}
\newtheorem{lemma}[theorem]{Lemma}
\newtheorem{proposition}[theorem]{Proposition}

\newtheorem{question}[theorem]{Question} 
 
\theoremstyle{definition}
\newtheorem{definition}[theorem]{Definition}
\newtheorem{remark}[theorem]{Remark}

\theoremstyle{definition}
\newtheorem{example}{Example}[section]

\newcommand{\leftrarrows}{\mathrel{\raise.75ex\hbox{\oalign{%
  $\scriptstyle\leftarrow$\cr
  \vrule width0pt height.5ex$\hfil\scriptstyle\relbar$\cr}}}}
\newcommand{\lrightarrows}{\mathrel{\raise.75ex\hbox{\oalign{%
  $\scriptstyle\relbar$\hfil\cr
  $\scriptstyle\vrule width0pt height.5ex\smash\rightarrow$\cr}}}}
\newcommand{\Rrelbar}{\mathrel{\raise.75ex\hbox{\oalign{%
  $\scriptstyle\relbar$\cr
  \vrule width0pt height.5ex$\scriptstyle\relbar$}}}}

\makeatletter
\def\leftrightarrowsfill@{\arrowfill@\leftrarrows\Rrelbar\lrightarrows}
\newcommand{\xleftrightarrows}[2][]{\ext@arrow 3399\leftrightarrowsfill@{#1}{#2}}
\makeatother

\definecolor{violet}{rgb}{.6,.6,0}
\definecolor{green}{rgb}{.0,.8,0}

\newcommand{\R}{\mathbb{R}}
\newcommand{\RP}{\R\mathbb{P}}
\newcommand{\CP}{\mathbb{CP}}

\let\int\relax
\newcommand{\int}{\mathring}

\newcommand{\boundary}{\partial}

\newcommand\simtimes{\mathbin{%
    \stackrel{\sim}{\smash{\times}\rule{0pt}{0.7ex}}%
    }}
\newcommand\cp{\mathbb{CP}^2}
\newcommand\cpbar{\smash{\overline{\mathbb{CP}}}^2}

\title[Multisections of 4-manifolds]{Multisections of 4-manifolds}

\author{Gabriel Islambouli}
\address{Department of Pure Mathematics\\University of Waterloo\\Waterloo, ON N2L 3G1, Canada}
\urladdr{https://uwaterloo.ca/scholar/gislambo}
\email{gabriel.islambouli@uwaterloo.ca}

\author{Patrick Naylor}
\address{Department of Pure Mathematics\\University of Waterloo\\Waterloo, ON N2L 3G1, Canada}
\urladdr{https://patricknaylor.org}
\email{patrick.naylor@uwaterloo.ca}

\thanks{PN is supported by an NSERC CGS-D scholarship.}

\begin{document}

\maketitle

\begin{abstract}
We introduce multisections of smooth, closed 4-manifolds, which generalize trisections to decompositions with more than three pieces. This decomposition describes an arbitrary smooth, closed 4-manifold as a sequence of cut systems on a surface. We show how to carry out many smooth cut and paste operations in terms of these cut systems. In particular, we show how to implement a cork twist, whereby we show that an arbitrary exotic pair of smooth 4-manifolds admit 4-sections differing only by one cut system. By carrying out fiber sums and log transforms, we also show that the elliptic fibrations $E(n)_{p,q}$ all admit genus $3$ multisections, and draw explicit diagrams for these manifolds.
\end{abstract}

\section{Introduction}

A trisection is a decomposition of a 4-manifold into three simple pieces, first introduced by Gay and Kirby \cite{GayKir16}. One of the nice features of a trisection is that it encodes all of the smooth topology of a 4-manifold as three cut systems of curves on a surface. It is therefore natural to attempt to realize the cut and paste operations ubiquitous in 4-manifold topology as operations on these surfaces. There has been notable progress in this direction, with recent work realizing the Gluck twist \cite{GayMei18}, the Price twist \cite{KimMil20}, and knot surgery \cite{AraMoe} in this way. Much of the existing literature uses the theory of relative trisections (see \cite{CasGayPin18} for an introduction), which provides a general framework for trisections of manifolds with boundary.

Despite the aforementioned progress, implementing these operations on a trisection can be quite unwieldy in practice. Moreover, what one might expect to be the most natural operation, cutting and regluing one of the pieces, never changes the manifold \cite{LauPoe72}. The goal of this paper is to relax the definition of a trisection in order to provide a more flexible object that is amenable to cut and paste operations. We introduce a decomposition, called a multisection of a 4-manifold, which is the generalization of a trisection to a decomposition which may have more than three pieces. Like trisections, the entire decomposition is encoded as $n$ cut systems of curves on a surface, where $n$ is the number of sectors. Unlike a trisection, one can cut along subsections and re-glue in order to change the diffeomorphism type of the manifold. In particular, in Section \ref{sec:EllipicFibrations}, we show how to use operations on subsections to produce multisection diagrams for the elliptic fibrations $E(n)_{p,q}$, a rich class of simply connected smooth 4-manifolds exhibiting exotic phenomena. Using a theorem of Fintushel and Stern \cite{FinSte97} characterizing the Seiberg-Witten basic classes of these manifolds, we obtain the following corollary.

\begin{restatable*}{corollary}{infManyExoticGenusThree}
\label{cor:infManyExoticGenus3}
There are infinitely many homeomorphism classes of manifolds admitting genus 3 multisections, each of which has infinitely many distinct smooth structures also admitting genus 3 multisections.
\end{restatable*}

More generally, we show that the subtle difference between diffeomorphism and homeomorphism in dimension four is highly compatible with the structure of a 4-section. By work of Curtis, Freedman, Hsiang, and Stong \cite{CurFreHsiSto96} and, independently, Matveyev \cite{Mat96}, any two smooth, homeomorphic, simply connected, closed 4-manifolds are related by a cork twist, i.e., cutting out a contractible compact 4-dimensional submanifold and reguluing it by an involution to produce a new 4-manifold. Interpreting this in the language of multisections, we obtain the following. 

\begin{restatable*}{theorem}{corkCurves}
\label{thm:corkCurves}
Suppose that $X$ and $X'$ are smooth, closed, oriented simply connected 4-manifolds that are homeomorphic, but not diffeomorphic. Then, there exists a surface $\Sigma$, and cut systems $C_1$, $C_2$, $C_3$, $C_4$, and $C_4'$, such that:
\begin{enumerate}
    \item $(\Sigma; C_1, C_2, C_3, C_4)$ is a 4-section diagram for $X$;
    \item $(\Sigma; C_1, C_2, C_3, C_4')$ is a 4-section diagram for $X'$;
    \item There exists a map $\tau: \Sigma \to \Sigma$ such that, $\tau(C_1) = C_1$, $\tau(C_3) = C_3$, and $\tau(C_4) = C_4'$ where $\tau$ is the restriction of a cork twist to $\Sigma$.
\end{enumerate}
\end{restatable*}

\noindent We explicitly realize the change in cut systems required to accomplish the Mazur cork twist in Figure \ref{fig:mazManCorkTwist}.

A natural measure of complexity of a 4-manifold $X$ which arises from this set up is its \emph{multisection genus}, i.e., the minimal genus, $g$, such that $X$ admits a genus $g$ multisection. While this is bounded below by the rank of the fundamental group, the invariant seems to be much more subtle for simply connected 4-manifolds. The standard simply connected 4-manifolds $\#^i S^2 \times S^2 \#^j \cp \#^k \cpbar$ have unbounded trisection genus, but, by contrast, we show the following proposition.

\begin{restatable*}{proposition}{standardManifoldsAreGenusOne}
\label{thm:standardManifoldsAreGenusOne}
The 4-manifolds $\#^i S^2 \times S^2 \#^j \cp \#^k \cpbar$ admit genus one multisections. In particular, these manifolds admit a $(2+2i+j+k)$-section of genus one.
\end{restatable*}

\section{Definitions and existence proofs}

Throughout this paper, we will decompose manifolds using handle decompositions with handles of prescribed indices. We will call an orientable manifold built with handles of index at most $k$ a $k$-handlebody, so that, for example, a $2$-handlebody may contain 1-handles. For notational convenience, we will take $\#^0 S^1 \times S^2 = S^3$ and $\natural^{0} S^1 \times B^3 = B^4$. Since 1-handlebodies are topologically quite simple, decomposing 3- and 4-dimensional manifolds into 1-handlebodies records topological complexity as the complexity of some associated gluing maps. With this in mind, we introduce the main object of study. The reader may refer to Figure \ref{fig:4SectionSchematic} for a visual depiction of this definition.

\begin{definition}
Let $X$ be a smooth, orientable, closed, connected 4-manifold. An $n$\emph{-section}, or \emph{multisection} of $X$ is a decomposition $X = X_1 \cup X_2 \cup \cdots \cup X_n$ such that:
\begin{enumerate}
    \item $X_i \cong \natural^{k_i} S^1 \times B^3$;
    \item $X_1 \cap X_2 \cap \dots \cap X_n = \Sigma_g$, a closed orientable surface of genus $g$;
    \item $X_i\cap X_j=H_{i,j}$ is a 3-dimensional 1-handlebody if $|i-j|=1$, and $X_i \cap X_j = \Sigma_g$ if $|i-j|>1$;
    \item $\partial X_i \cong \#^{k_i} S^1 \times S^2$ has a Heegaard splitting given by $H_{(i-1),i} \cup_{\Sigma} H_{i,(i+1)}$.
\end{enumerate}
\end{definition}

\begin{figure}
    \centering
    \includegraphics[width=0.16\textwidth]{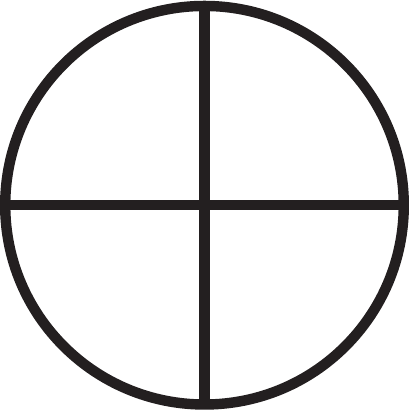}
    \put(-60,48){$X_1$}
    \put(-60,20){$X_2$}
    \put(-28,20){$X_3$}
    \put(-28,48){$X_4$}
    \put(5,35){$H_{34}$}
    \put(-98,35){$H_{12}$}
    \put(-45,-13){$H_{23}$}
    \put(-45,81){$H_{41}$}
    \caption{A schematic picture of a 4-section. The $X_i$ are 4-dimensional 1-handlebodies, and the $H_{ij}$ are 3-dimensional 1-handlebodies. The center of the figure represents a genus $g$ surface, where all of these pieces meet}
    \label{fig:4SectionSchematic}
\end{figure}

With parameters as in the definition, we will describe this decomposition as a $(g;k_1,\dots,k_n)$ $n$-section of $X$. We will refer to each $X_i$ as a \emph{sector} and $\Sigma_g$ as the \emph{central surface}. We call any pair of 3-dimensional handlebodies which are not the boundary of a common sector a \emph{cross-section}, and we note that any such pair describes a separating, embedded 3-manifold in $X$. Finally, we call the union of the successive sectors $X_{i} \cup X_{i+1} \cup\cdots\cup X_{i+l}$ a \emph{subsection}, and we will denote it by $X_{i,i+l}$. We will call a genus $g$ multisection of a 4-manifold $X$ \emph{thin} if $k_i=g-1$ for all $i$. 

Since every smooth, orientable, closed, connected manifold admits a trisection \cite{GayKir16}, all such 4-manifolds admit multisections. We note that, alternatively, one can prove that all such manifolds admit 4-sections using the gluing techniques of Section \ref{sec:SubsectionOperations}, but we leave the details of the proof to the interested reader. We now define diagrams representing a multisection.

\begin{definition}
A \emph{multisection diagram} is an ordered collection $(\Sigma;\mathcal{C}_1,\dots,\mathcal{C}_n)$ where $\Sigma$ is a surface, $\mathcal{C}_1,\dots,\mathcal{C}_n$ are cut systems for $\Sigma,$ and each triple $(\Sigma; \mathcal{C}_i, \mathcal{C}_{i+1})$ is a Heegaard diagram for $\#^{k_i} S^1 \times S^2$ for some non-negative integer $k_i$ (where the indices are taken mod $n$).
\end{definition}

A multisection diagram gives rise to a multisection via the process illustrated in Figure \ref{fig:fillInTrisection}. Beginning with $\Sigma\times D^2$, one attaches $n$ sets of thickened 3-dimensional 2-handles and a thickened 3-handle along the boundary of $\Sigma\times D^2$ as prescribed by $C_1,\dots,C_n$. By construction, the resulting 4-manifold has $n$ boundary components, each diffeomorphic to $\#^{k_i}S^1\times S^2$. By a theorem of Laudenbach and Po\'enaru \cite{LauPoe72} these boundary components can be uniquely capped off with 4-dimensional 1-handlebodies to produce a smooth closed 4-manifold equipped with a natural $n$-section. Conversely, given an $n$-section of $X$, cut systems determined by the compressing disks for the 3-dimensional handlebodies, $H_{i,j}$, describe an $n$-section diagram for $X$. 

\begin{figure}
    \centering
    \includegraphics[scale = .3]{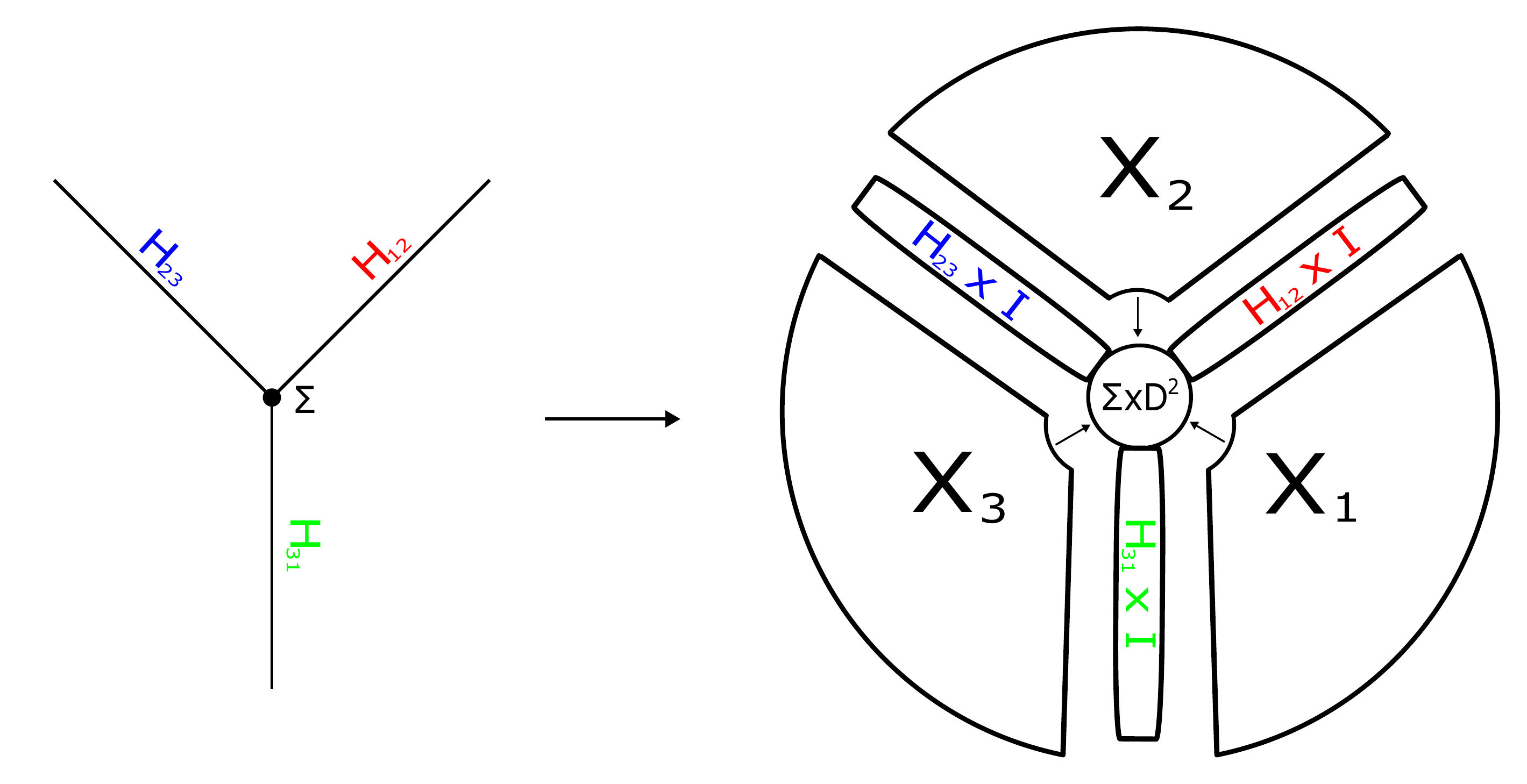}
    \caption{Thickening the 3-manifold obtained by attaching 3-dimensional 2-handles and a 3=handles to the central surface as prescribed by a multisection diagram gives a 4-dimensional manifold which can be uniquely capped off by 4-dimensional 1-handlebodies.}
    \label{fig:fillInTrisection}
\end{figure}

We now give definitions for the corresponding decompositions for manifolds with boundary; this will be the object obtained by removing a number of sectors from a closed multisection. Here, the induced structure on the boundary will be a Heegaard splitting. While such an object can have many sectors, the techniques in Section \ref{sec:handleDecomps} show that one can  reduce the decomposition to two sectors, so we will focus on that case.

\begin{definition}
Let $X$ be a smooth, orientable, connected 4-manifold $X$ with connected boundary. A \emph{bisection} of $X$ is a decomposition $X=X_1 \cup X_2$ such that:
\begin{enumerate}
    \item $X_i \cong \natural^{k_i} S^1 \times B^3$;
    \item $\partial X_1 = H_1 \cup_\Sigma H_2$, where each $H_i$ is a genus $g$ handlebody and $\partial H_i=\Sigma$;
    \item $\partial X_2 = H_2 \cup_\Sigma H_3$, where $H_3$ is a genus $g$ handlebody with $\partial H_3=\Sigma$;
    \item $X_1 \cap X_2 = H_2$;
    \item $\partial X= H_1 \cup_\Sigma H_3$, i.e., has a Heegaard splitting given by $\Sigma$.
\end{enumerate} 
\end{definition}

Bisections have been used previously in the literature. These decompositions were explored by Scharlemann \cite{Sch08}, who proved that any homology 4-ball with boundary $S^3$ admitting a bisection of genus at most three is in fact diffeomorphic to $B^4$. Bisections have also been used by Birman and Craggs \cite{BirCra78} to study invariants of homology 3-spheres, as well as by Ozsv\'{a}th and Szab\'{o} to define cobordism maps in Heegaard-Floer homology \cite{OzsSza06}. We will take a more constructive approach to these objects, and a central object to such a viewpoint is the corresponding diagrammatic object.

\begin{definition}
A \emph{bisection diagram} is an ordered quadruple $(\Sigma,C_1, C_2, C_3)$ where $C_1, C_2$ and $C_3$ are cut systems satisfying $(\Sigma; C_i, C_{i+1})$ is a Heegaard diagram for $\#^{k_i} S^1 \times S^2$ for some non-negative integer $k_i$.
\end{definition}

Note that in the above definition, there is no constraint on the pair of cut systems $C_1$ and $C_3$ and these cut systems form a Heegaard diagram for the boundary of the resulting manifold. As in the closed case, a bisection will determine a bisection diagram and vice versa. We now give proofs of the existence of bisections of a class of smooth, compact 4-manifolds. The proof is similar to that of \cite[Lemma 14]{GayKir16}.

\begin{theorem}
\label{thm:2HandlebodyHasBisection}
Every smooth, compact, connected 2-handlebody with connected boundary admits a bisection.
\end{theorem}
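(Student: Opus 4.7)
The plan is to adapt the proof of Gay and Kirby's existence theorem for trisections \cite{GayKir16} to this setting, with the simplification that there are no 3- or 4-handles to dualize. Start with a handle decomposition of $X$ with one 0-handle (possible by connectedness), $m$ 1-handles, and $n$ 2-handles. Let $Y$ denote the union of the 0- and 1-handles, so that $Y \cong \natural^m S^1 \times B^3$ and the 2-handles are attached to $\partial Y \cong \#^m S^1 \times S^2$ along a framed link $L$ with $n$ components.

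The key technical step, carried out just as in \cite{GayKir16}, is to begin with the standard genus-$m$ Heegaard splitting of $\partial Y$ and, after stabilizing enough times, arrange that $L$ can be isotoped onto the resulting Heegaard surface $\Sigma$ of some genus $g \geq m$, with the framing on each component of $L$ agreeing with its surface framing from $\Sigma$. Each such stabilization corresponds geometrically to introducing a canceling 1-/2-handle pair into the decomposition. After further rearrangement, we may assume $L$ is disjoint from the interior of the handlebody on one side of $\Sigma$, which we call $H_1$. Now set $X_1 = Y$, a 4-dimensional 1-handlebody whose boundary $\partial X_1 = H_1 \cup_\Sigma H_2$ is the stabilized Heegaard splitting. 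Push $H_2$ slightly into the 2-handle region of $X$ so that $\Sigma = \partial H_2 \subset \partial X$, and let $X_2$ be the closure of $X \setminus X_1$ adjusted so that $X_1 \cap X_2 = H_2$. The boundary of $X_2$ then decomposes as $H_2 \cup_\Sigma H_3$, where $H_3 \subset \partial X$ is the handlebody obtained from $H_2$ by surgery along $L$ using the surface framing, which also produces the required Heegaard splitting $\partial X = H_1 \cup_\Sigma H_3$ of the boundary.

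The main obstacle, and really the only nontrivial verification, is that $X_2$ is itself a 4-dimensional 1-handlebody. This is where the surface framing condition is essential: $X_2$ admits a handle decomposition consisting of a 4-dimensional thickening of $H_2$ (which is $\natural^g S^1 \times B^3$) together with the $n$ original 2-handles of $X$, and the surface framing condition on $L \subset \Sigma$ ensures that each of these 2-handles algebraically cancels a 1-handle of the thickened $H_2$. After performing the cancellations, $X_2$ inherits a handle decomposition with handles only of index at most one, so $X_2 \cong \natural^{k_2} S^1 \times B^3$ for some $k_2$. This cancellation argument is the direct analog of the corresponding step for the middle sector in the Gay--Kirby trisection existence proof, and once it is in hand all the remaining axioms of a bisection are immediate from the construction.
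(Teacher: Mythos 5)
Your overall strategy is the same as the paper's: take $X_1$ to be the $0$- and $1$-handles, put the attaching link $L$ on a Heegaard surface $\Sigma$ for $\partial X_1$ with handle framings equal to surface framings, and let $X_2$ be the trace of the $2$-handle attachments. However, there is a genuine gap at the one step you yourself identify as the crucial one. You assert that ``the surface framing condition on $L\subset\Sigma$ ensures that each of these 2-handles algebraically cancels a 1-handle of the thickened $H_2$.'' The surface framing condition controls only the framings; it does not produce cancellation. What is actually needed is a geometric duality condition: after pushing $L$ into $H_2$, each component $L_i$ must meet a properly embedded disk $D_i\subset H_2$ with $|L_i\cap D_j|=\delta_{i,j}$. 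This is what guarantees both that the surgered manifold $H_3$ is again a handlebody and that each $2$-handle geometrically cancels a $1$-handle of $H_2\times I$, so that $X_2\cong\natural^{k_2}S^1\times B^3$. The condition is not automatic for a link embedded on $\Sigma$ with the right framings: a component of $L$ could, for instance, bound a compressing disk of $H_2$, in which case surgery produces $H_2\#(S^1\times S^2)$ rather than a handlebody and the corresponding $2$-handle cancels nothing. The paper arranges duality explicitly by building $\Sigma$ from a tunnel system for $L$, so that each $L_i$ is dual to the meridian disk of its own tube; your ``stabilize until $L$ lies on $\Sigma$'' step must be supplemented by this (or an equivalent) arrangement.

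A secondary but related point: even where cancellation is available, ``algebraically cancels'' is the wrong notion. Smooth cancellation of a $1$-/$2$-handle pair requires the attaching circle of the $2$-handle to meet the belt sphere of the $1$-handle geometrically in a single point; algebraic intersection number one does not suffice in dimension four. The duality condition $|L_i\cap D_j|=\delta_{i,j}$, stated with geometric intersection numbers, is exactly what delivers this. With that condition inserted and invoked in place of the framing condition, the rest of your argument goes through and coincides with the paper's proof.
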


\begin{proof}

Let $X$ be a 4-manifold satisfying the hypotheses of the theorem. Take a handle decomposition of $X$ with a single $0$-handle, $k$ $1$-handles, and $n$ $2$-handles attached along the framed link $L$, and let $X_1$ be the union of the $0$- and $1$-handles. The framed attaching link $L$ lies in $\partial X_1 = \#^kS^1 \times S^2$. Using a tunnel system for $L$ in $X_1$, we may arrange $L$ to lie on a Heegaard surface $\Sigma$ for $\partial X_1$, which decomposes $\boundary X_1$ into two handlebodies $H_1$ and $H_2$. Moreover, when we push $L$ into $H_2$, each component $L_i$ of $L$ has a properly embedded dual disk $D_i\subset H_2$, such that $|L_i \cap D_j| = \delta_{i,j}$. This condition ensures that the result of pushing $L$ into $H_2$ and performing surgery on $L$ is another 3-dimensional handlebody, $H_3$. Moreover, the cobordism between $H_2$ and $H_3$ induced by the 2-handle attachment along $L$ is a 4-dimensional 1-handlebody, which we declare to be $X_2$. The decomposition $X = X_1 \cup X_2$ is the desired bisection. 
\end{proof}

It follows from work of Eliashberg \cite{Eli90} that every compact Stein manifold admits the structure of a 2-handlebody with framing conditions on its attaching links. We therefore immediately obtain the following corollary.

\begin{corollary}
\label{cor:steinMfldsHaveBisections}
Every compact Stein 4-manifold admits a bisection.
\end{corollary}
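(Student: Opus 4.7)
The plan is to deduce the corollary as a direct application of Theorem~\ref{thm:2HandlebodyHasBisection}, with Eliashberg's theorem supplying the handle-theoretic input. The work of Eliashberg cited in the paragraph preceding the corollary states that every compact Stein 4-manifold $X$ is diffeomorphic to a 4-manifold built from a single 0-handle, some 1-handles, and some 2-handles, where the 2-handles are attached along Legendrian knots in the contact boundary of the underlying 1-handlebody with framing one less than the Thurston--Bennequin number. The framing hypothesis is irrelevant for our purposes; all that matters is that $X$ has the structure of a 2-handlebody.

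The only thing to check in order to quote Theorem~\ref{thm:2HandlebodyHasBisection} is that the hypotheses of that theorem hold, namely compactness, connectedness of $X$, and connectedness of $\partial X$. Compactness is assumed outright, and connectedness of $X$ may be assumed (or the argument may be applied componentwise). For connectedness of the boundary, I would argue inductively on the handle decomposition produced by Eliashberg's theorem: starting from the single 0-handle $B^4$, whose boundary $S^3$ is connected, attaching a 1-handle along two disjoint 3-balls in a connected 3-manifold preserves boundary connectedness, and attaching a 2-handle along a knot in the boundary (removing an $S^1\times D^2$ neighborhood and gluing in $D^2\times S^1$) likewise preserves connectedness of the boundary. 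Hence $\partial X$ is connected.

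With the hypotheses verified, Theorem~\ref{thm:2HandlebodyHasBisection} produces a bisection of $X$, yielding the corollary. There is no real obstacle here: the corollary is essentially a packaging of Eliashberg's handle presentation together with the existence theorem already proved. If desired, one could additionally remark that the bisection construction in the proof of Theorem~\ref{thm:2HandlebodyHasBisection} is compatible with the contact/Legendrian structure, since the attaching link $L$ lives on a Heegaard surface of $\partial(\natural^k S^1\times B^3)$; but this is not needed for the statement as given.
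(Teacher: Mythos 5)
Your proposal is correct and follows the paper's argument exactly: cite Eliashberg's theorem to realize a compact Stein 4-manifold as a 2-handlebody and then apply Theorem~\ref{thm:2HandlebodyHasBisection}. The extra verification that the boundary is connected is a reasonable bit of care that the paper leaves implicit.
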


Recently, Lambert-Cole, Meier, and Starkson \cite{LamMeiSta20} have shown that symplectic manifolds admit Weinstein trisections. Roughly, they do this by realizing a symplectic 4-manifold as a branched covering over $\cp$ and pulling back the symplectic form. It is likely that Stein manifolds admit a similar compatibility with bisections, as they can be realized as branched covers over $B^4$ \cite{LoiPie01}.

\section{Handle Decompositions and Multisections}\label{sec:handleDecomps}

In this section, we describe how to pass between handle decompositions and multisections. The results of these sections will be frequently used in both directions. Passing from a handle decomposition to a multisection diagram will allow us to produce multisections of manifolds with well known handle decompositions, and these multisections can then be modified to produce both new multisections of the same manifold, or multisections of different manifolds. Passing from multisections to handle decompositions will allow us to identify a manifold from its multisection diagram. We will explicitly treat the case of closed multisections, and make notes of the modifications needed for bisections.

\subsection{Handle decompositions}

In this subsection, we show how to pass from a multisection diagram to a Kirby diagram. We will first produce a handle decomposition from a multisection, and then give a Kirby diagram inducing the same handle decomposition. Such a Kirby diagram will have the 2-handles lying in a nice position with respect to a Heegaard surface for the boundary of the 0- and 1-handles, which makes the next well known lemma pertinent. 

\begin{lemma}
\label{lem:dualhandlebody}
Let $H$ be a handlebody and and suppose that $\gamma\subset \partial H$ is a curve such that $|\gamma\cap D|=1$ for some properly embedded disk $D\subset H$. Then, the result of pushing $\gamma$ into $H$, and doing surgery on $\gamma$ is again a handlebody. Moreover, if we do surgery on $\gamma$ using the surface framing, then $\gamma$ bounds a disk in the surgered handlebody.
\end{lemma}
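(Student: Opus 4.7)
The plan is to reduce the problem to a direct surgery computation inside a solid torus neighborhood. First, I take $V := N(\gamma \cup D) \subset H$, the regular neighborhood of $\gamma \cup D$. Since $\gamma \cap D$ is a single point and $D$ is contractible, $\gamma \cup D$ deformation retracts onto $\gamma$, so $V$ is a solid torus. Its boundary splits as $\partial V = P \cup E$, where $P := N(\gamma \cup \partial D) \subset \partial H$ is the once-punctured torus obtained by thickening two curves meeting once on a surface, and $E$ is a properly embedded disk in the interior of $H$ capping off the unique boundary circle of $P$. On the torus $\partial V$, the curve $\partial D$ bounds $D \subset V$, so it is a meridian of $V$, and since $\gamma$ and $\partial D$ meet once on $\partial V$, the curve $\gamma$ is a longitude of $V$.

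Next, I cut $H$ along the separating disk $E$. This realizes $H$ as the boundary connected sum $H = V \natural H''$, where $H'' := \overline{H \setminus V}$. Since $H$ is a genus $g$ handlebody and $V$ has genus $1$, the complement $H''$ must be a handlebody of genus $g - 1$. Because the pushed-in curve $\gamma'$ lies entirely inside $V$, the surgery is supported in $V$, and the surgered handlebody becomes $H_{\text{surgered}} = V_{\text{surgered}} \natural H''$.

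The lemma then reduces to a local computation inside $V$: surgery on (the push-in of) the longitude $\gamma$ using the surface framing produces a solid torus in which $\gamma$ bounds a disk. The pushoff of $\gamma$ within $P \subset \partial V$ is parallel to $\gamma$ on $\partial V$, so the surface framing on $\gamma'$ agrees with the longitudinal framing induced by $\partial V$. A direct computation in $V \cong D^2 \times S^1$ then shows that this surgery swaps the roles of meridian and longitude on $\partial V$: the new filling is attached so its meridian disk is glued along a longitude of the core, so after gluing the original longitude $\gamma$ becomes a meridian of $V_{\text{surgered}}$ and therefore bounds a disk, while the original meridian $\partial D$ becomes a longitude. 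Taking the boundary connected sum with $H''$, we obtain $H_{\text{surgered}} = V_{\text{surgered}} \natural H''$, which is again a genus $g$ handlebody, and the disk bounded by $\gamma$ in $V_{\text{surgered}}$ persists in $H_{\text{surgered}}$.

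The main point requiring care is the framing bookkeeping in the last paragraph, namely identifying the surface framing with the longitudinal framing of the core of $V$ and confirming the meridian/longitude swap; once this local model is pinned down, the rest of the argument is formal.
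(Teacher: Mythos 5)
Your argument is correct. Note that the paper offers no proof of this lemma at all -- it is stated as ``well known'' -- so there is nothing to compare against except the standard folklore argument, which is essentially what you have written: split off a solid torus $V = N(\gamma\cup D)$ containing the surgery, observe that the surface framing agrees with the $\partial V$-framing of the core, and check that longitude-framed surgery on the core of a solid torus returns a solid torus in which the old longitude is the new meridian. The framing bookkeeping, which you correctly flag as the delicate point, is right: an annular neighborhood of $\gamma$ in $\partial H$ lies inside the once-punctured torus $P\subset\partial V$, so the surface push-off is a $\partial V$-parallel longitude, and filling $T^2\times I$ along that slope makes $\gamma$ bound the new meridian disk. (For the first claim, it is worth noting that $V_{\mathrm{surgered}} = (T^2\times I)\cup(S^1\times D^2)$ is a solid torus for \emph{any} framing, which is why the handlebody conclusion holds without the surface-framing hypothesis.)

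The one step you assert rather than prove is that $H''=\overline{H\setminus V}$ is a handlebody of genus $g-1$; ``$H$ has genus $g$ and $V$ has genus $1$'' does not by itself force this, since a complementary piece of a handlebody need not a priori be a handlebody. You can either invoke the standard fact that cutting a handlebody along any properly embedded disk (here $E$) yields handlebodies, or argue directly: since $|\gamma\cap D|=1$, the disk $D$ is non-separating, so $H$ cut along $D$ is a genus $g-1$ handlebody $H'$ in which $\gamma$ becomes a boundary arc, and $H''$ is obtained from $H'$ by deleting a collar neighborhood of that boundary arc, which does not change the homeomorphism type. Either way the gap is cosmetic, and the rest of the proof stands.
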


We are now ready to show how to pass between a multisection and a handle decomposition.

\begin{proposition}
\label{prop:handledecomposition}
Suppose that a 4-manifold $X$ admits an $n$-section $X= X_1 \cup X_2 \cup \cdots \cup X_n$ of genus $g$. Then, there is a handle decomposition for $X$ satisfying the following properties:
\begin{enumerate}
    \item The union of the 0- and 1-handles is a collar neighbourhood $H_{1,n}\times [0,1]$ of $H_{1,n}$ in $X_1$, where we identify $H_{1,n}\times \{0\}$ with $H_{1,n}\subset X_1$.
    \item The 2-handles for $X$ are attached sequentially along curves in a neighbourhood of $\partial(H_{1,n}\times \{1\})$. 
\end{enumerate}
\end{proposition}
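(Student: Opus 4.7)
The plan is to view $X$ as being built from a collar of $H_{1,n}$ in $X_1$, with 2-handles added in rounds corresponding to the successive sectors $X_2, X_3, \ldots, X_{n-1}$, and finally capped off by $X_n$ using 3- and 4-handles. For condition (1), set $N := H_{1,n} \times [0,1] \subset X_1$ with $H_{1,n} \times \{0\}$ identified with $H_{1,n} \subset \partial X_1$; then thickening a handle decomposition of the genus-$g$ handlebody $H_{1,n}$ gives $N$ a 4-dimensional handle decomposition with one 0-handle and $g$ 1-handles. The key boundary observation is that the surface $\Sigma \times \{1\}$ separates $\partial N$ into two genus-$g$ handlebodies, both realizing the cut system $C_1$, so $\partial N$ has a Heegaard splitting using $\Sigma \times \{1\}$ as Heegaard surface.

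For condition (2), I would inductively prove that $X_{1,i} := X_1 \cup \cdots \cup X_i$ is obtained from the previous stage ($N$ when $i = 1$, and $X_{1,i-1}$ otherwise) by attaching $g - k_i$ 2-handles along curves pushed slightly off $\Sigma \times \{1\}$ into the upper handlebody $H_{i-1,i}$, using the surface framing. The mechanism is Lemma~\ref{lem:dualhandlebody}: after putting the Heegaard diagram $(\Sigma; C_i, C_{i+1})$ of $\#^{k_i}S^1 \times S^2$ in standard position, exactly $g - k_i$ curves of $C_{i+1}$ each meet a curve of $C_i$ in a single transverse point, and surgery on these converts the upper handlebody from $H_{i-1,i}$ to $H_{i,i+1}$ while leaving $\Sigma \times \{1\}$ intact in the new boundary. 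Placing successive 2-handles on disjoint parallel copies of $\Sigma \times \{1\}$ keeps them in a neighborhood of $\Sigma \times \{1\}$ and prevents interference among the attachments. In the base case the $g - k_1$ 2-handles cancel $g - k_1$ of the 1-handles of $N$ (since each attaching circle meets the corresponding belt sphere once), producing $\natural^{k_1} S^1 \times B^3 \cong X_1$; the inductive step then follows from an Euler-characteristic check using $\chi(X_i) = 1 - k_i$, which shows each $X_i$ contributes exactly $g - k_i$ 2-handles to the running handle decomposition.

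After round $n - 1$, the constructed 4-manifold has boundary $H_{1,n} \cup_\Sigma H_{n-1,n} \cong \#^{k_n} S^1 \times S^2 = \partial X_n$, which by Laudenbach--Po\'enaru is capped off uniquely by $X_n$ using $k_n$ 3-handles and a single 4-handle. The main obstacle will be justifying that the intermediate 4-manifold at each round is honestly $X_{1,i}$ rather than merely some 4-manifold with the same boundary; this forces careful bookkeeping that the central surface $\Sigma$ (as $\Sigma \times \{1\}$) persists as a Heegaard surface of the boundary throughout the induction, so that the dual handlebody lemma can be applied unambiguously at every stage to identify the upper handlebody with the correct $H_{i,i+1}$.
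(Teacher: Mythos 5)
Your construction follows the paper's proof in all essentials: take the collar $H_{1,n}\times[0,1]$ as the union of the $0$- and $1$-handles, realize each successive sector as a round of $g-k_i$ surface-framed $2$-handles attached along curves of one cut system dual to the previous one (Lemma \ref{lem:dualhandlebody}), keep the attaching circles on nested parallel copies of $\Sigma\times\{1\}$, and close up at the end. The only cosmetic difference is where you stop: you run the $2$-handle rounds through sector $n-1$ and cap $X_n$ directly with $k_n$ $3$-handles and a $4$-handle, whereas the paper attaches $2$-handles for all $n$ sectors and closes with a genus-$g$ $1$-handlebody ($g$ $3$-handles and a $4$-handle); both decompositions satisfy (1) and (2), though the paper's count is the one quoted in Corollary \ref{prop:handleDecompFromThinMultisection}.

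The one genuine gap is the step you yourself flag as ``the main obstacle,'' because your proposed resolution does not close it. Tracking that $\Sigma\times\{1\}$ persists as a Heegaard surface and that the upper handlebody becomes $H_{i,i+1}$ only controls the \emph{boundary} of the piece you have built; it cannot distinguish two different $4$-dimensional fillings of that boundary, which is exactly the issue at stake. The paper resolves this with Laudenbach--Po\'enaru at \emph{every} sector, not just the last: the cobordism produced by the $i$-th round of $2$-handles is diffeomorphic to $\natural^{k_i}S^1\times B^3$ (the $2$-handles cancel $g-k_i$ of the $1$-handles of $H_{(i-1),i}\times I$), and since any two ways of gluing a $4$-dimensional $1$-handlebody along its $\#^{k_i}S^1\times S^2$ boundary yield diffeomorphic results, one may take the sector $X_i$ to \emph{be} this specific cobordism without changing the diffeomorphism type of $X$. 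You invoke Laudenbach--Po\'enaru only for the final cap; importing the same argument into each inductive step is what completes the proof.
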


\begin{proof}
The plan will be to reconstruct $X$ from its multisection, starting with $\Sigma_g\times D^2$. We will parameterize $D^2$ as the unit disk in $\mathbb{C}$. Note that after attaching the thickened handlebody, $H_{1,n} \times I$, to $\Sigma_g\times D^2$, the resulting manifold retracts onto $H_{1,n} \times I$. These are the 1-handles of the handle decomposition, and we will built the rest of the manifold without using any more 1-handles. We parameterize $H_{1,n} \times I$ as $H_{1,n}\times [0,1]$, identifying $H_{1,n}\times\{0\}$ with $H_{1,n}\subset X$, and declare that $H_{1,n}\times \{1\}\subset X_1$. We will use $H[a]$ to denote $H_{1,n}\times [a]\subset X$.

If we view $X- \nu(H_{1,n})$ as a cobordism from $H[1]$ to $H[0]$ then this cobordism is a composition of each sector, $X_i$, where we think of each sector as a cobordism (of manifolds with boundary) from $H_{(i-1),i}$ to $H_{i,(i+1)}$. By assumption, the union of the handlebodies $H_{(i-1),i}$ and $H_{i,(i+1)}$ is $\#^{k_i}S^1\times S^2$. This manifold admits a standard Heegaard splitting with Heegaard surface $\Sigma_g$.  Let $L_i$ be some choice of $g-k_i$ curves bounding disks in $H_{i,(i+1)}$ which are dual to curves bounding disks in  $H_{(i-1),i}$. Considering this as a link with surface framing, we obtain a $g-k_i$ component link $L$ satisfying the assumptions of Lemma \ref{lem:dualhandlebody} with respect to $H_{(i-1),i}$. Attaching 2-handles along this link produces a cobordism from $H_{(i-1),i}$ to $H_{i,(i+1)}$.

In fact, this cobordism is diffeomorphic to $\natural^{k_i}S^1\times B^3$ since the 2-handles cancel the $g-k_i$ of the 1-handles of $H_{(i-1),i}\times I$. By \cite{LauPoe72}, any two ways of attaching $\natural^{k_i}S^1\times B^3$ are equivalent, so we may take the sector $X_i$ to be this specific cobordism. In other words, we have now exhibited $X- \nu(H_{1,n})$ as a union of $g$ 1-handles and $(g-k_1)+\cdots+(g-k_{n})$ 2-handles, attached as required. What remains is a genus $g$ 1-handlebody, and this forms the 3-handles and 4-handle for $X$.

We next arrange the attaching circles for all of the 2-handles to lie in $H[1]$. A schematic of this process is illustrated in Figure \ref{fig:MStoKD}. First, decompose the boundary of $X- (\nu(H_{1,n})\cup \Sigma\times D^2)$  as $H[1] \cup (\Sigma\times [0,1]) \cup H[0]$. Since each 2-handle attaching curve in the above construction may be isotoped into $\Sigma_g$, we can push the 2-handles into various levels of the $H_{i,i+1}$. In particular, pushing each 2-handle sufficiently deep into the handlebodies ensures that there exists a collar neighbourhood of $\Sigma \times [0,1]$ which we parameterize as $\Sigma \times [0,1] \times [0,1]$.

We slightly modify the attaching region of each of these 2-handles. In particular, we push the 2-handle attaching curves between $H_{i-1,i}$ and $H_{i,i+1}$ into $\Sigma \times {\{\frac{i}{n}\} \times \{\frac{n-i}{n}}\}$. Using this parameterization, each 2-handle is attached at a shallower collar of $\Sigma \times [0,1]$. Thus the second factor of the product structure on $\Sigma \times [0,1] \times [0,1]$ can be used to transport each of these 2-handle attaching curves into  $\Sigma \times [0,1] \times \{0\} \subset H[1]$.
\end{proof}

From the above proof we see that a thin multisection has a particularly nice handle structure, where each sector corresponds to a single 2-handle attachment. We immediately obtain the following corollary.

\begin{corollary}
\label{prop:handleDecompFromThinMultisection}
Suppose that a 4-manifold $X$ admits a thin $n$-section of genus $g$. Then, $X$ admits a handle decomposition with one $0$-handle, $g$ 1-handles, $n$ 2-handles, $g$ 3-handles, and one 4-handle.
\end{corollary}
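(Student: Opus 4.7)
The plan is to apply Proposition \ref{prop:handledecomposition} directly and then specialize to the thin case by plugging in $k_i = g-1$.

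First, I would invoke Proposition \ref{prop:handledecomposition} to obtain a handle decomposition whose 0- and 1-handles form a collar $H_{1,n} \times [0,1]$ of the handlebody $H_{1,n}$. Since $H_{1,n}$ is a 3-dimensional genus $g$ handlebody, this collar is diffeomorphic to $\natural^g S^1 \times B^3$ as a 4-manifold, contributing exactly one 0-handle and $g$ 1-handles to the decomposition.

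Next I would count 2-handles using the construction in the proof of Proposition \ref{prop:handledecomposition}, which attaches $g - k_i$ 2-handles for the $i$-th sector $X_i$. Substituting the thin hypothesis $k_i = g - 1$ gives exactly one 2-handle per sector, so the total 2-handle count is $\sum_{i=1}^{n}(g - k_i) = n$.

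Finally, I would account for the remaining piece of $X$. The proof of Proposition \ref{prop:handledecomposition} identifies this piece as a genus $g$ 4-dimensional 1-handlebody $\natural^g S^1 \times B^3$ (by \cite{LauPoe72} it is uniquely determined up to diffeomorphism by its boundary). Turning this piece upside down expresses it as $g$ 3-handles and a single 4-handle attached to the previous step. Adding everything together yields one 0-handle, $g$ 1-handles, $n$ 2-handles, $g$ 3-handles, and one 4-handle, as claimed. There is no real obstacle: the whole statement is a bookkeeping exercise once Proposition \ref{prop:handledecomposition} is in hand, the only subtle point being that the leftover upper 1-handlebody has genus exactly $g$, which follows because its boundary is a Heegaard splitting along $\Sigma_g$.
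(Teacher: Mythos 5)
Your proof is correct and matches the paper's approach exactly: the paper simply observes that the corollary is immediate from the proof of Proposition \ref{prop:handledecomposition}, and your write-up just makes the bookkeeping explicit (one 0-handle and $g$ 1-handles from the collar of $H_{1,n}$, one 2-handle per sector since $g-k_i=1$ in the thin case, and $g$ 3-handles plus a 4-handle from the leftover genus $g$ 1-handlebody).
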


\begin{figure}
    \centering
    \includegraphics[scale=.4]{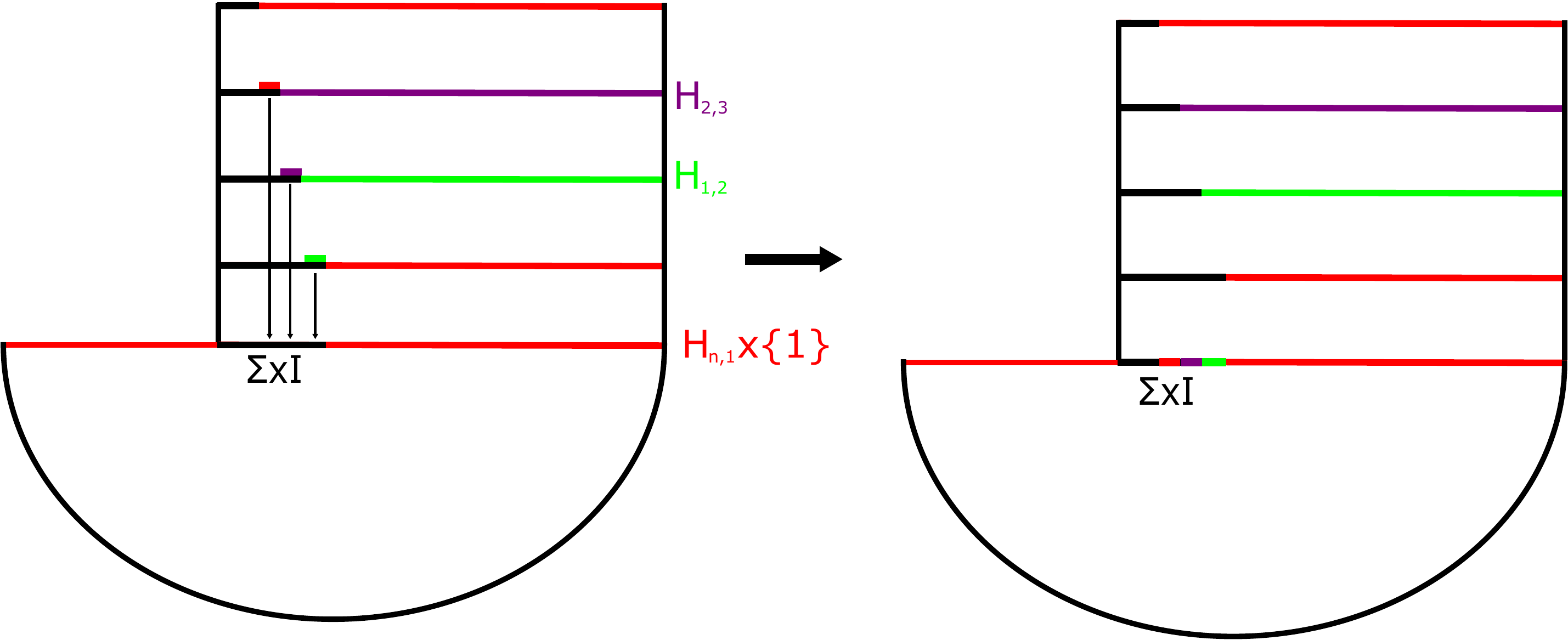}
    \caption{A schematic illustrating the process in Proposition \ref{prop:kirbydiagramfrommultisection}. The short colored curves indicate the 2-handle attachments giving rise to the subsequent 3-dimensional handlebodies. These 2-handles can be projected down to a collar neighbourhood of the multisection surface by a canonical product structure on the cobordism between the handlebodies in this collar neighbourhood.}
    \label{fig:MStoKD}
\end{figure}

\subsection{Drawing Kirby diagrams}

In this subsection, we further refine Proposition \ref{prop:handledecomposition} to produce Kirby diagrams from a given multisection diagram. We remind the reader that a Kirby diagram is drawn in a particular representation of some connected sum of copies of $S^1 \times S^2$. Thus, our task is to take the handle decomposition above and concretely draw the attaching link for each 2-handle. 

\begin{proposition}
\label{prop:kirbydiagramfrommultisection}
Suppose that a 4-manifold $X$ admits an $n$-section of genus $g$, described by a multisection diagram $(\Sigma,\mathcal{C}_I)$. Then, a Kirby diagram for $X$ may be obtained via the following algorithm:
\begin{enumerate} 
 \item Standardize one set of curves, say $\mathcal{C}_1$, to look like the cut system on the left of  Figure \ref{fig:cutSystemToSpheres}.
  \item Cut along small neighbourhoods of the curves in $\mathcal{C}_1$ and place two spheres on the two boundary components resulting from each cut, as illustrated on the right of Figure \ref{fig:cutSystemToSpheres}. Each pair of spheres is the attaching region for a 1-handle in the Kirby diagram.
 \item For each $1<k\leq n$, draw the curves of $\mathcal{C}_k$ that are dual to a curve in $\mathcal{C}_{k-1}$, ``pushed in'' to the surface. That is, view the surface in the diagram as $\Sigma\times \{0\}\subset \Sigma\times [0,1]$ with the $[0,1]$ factor pointing away from the point at infinity, and attach the relevant curves of $\mathcal{C}_k$ at level $\Sigma\times \{\frac{n-k}{n}\}$.
 \item Frame each curve with its corresponding surface framing. 
\end{enumerate}
\end{proposition}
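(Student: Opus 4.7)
The plan is to concretely carry out the handle decomposition of Proposition \ref{prop:handledecomposition}, yielding a Kirby diagram. I would justify the four steps sequentially, interpreting the algorithm as drawing the 0-handle, the 1-handles, and the 2-handles of $X$, with the 3-handles and 4-handle implicit.

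By Proposition \ref{prop:handledecomposition}, the union of the 0- and 1-handles of $X$ is $H_{1,n}\times[0,1] \cong \natural^{g} S^1 \times B^3$. Steps 1 and 2 realize this as a standard Kirby picture: putting $\mathcal{C}_1$ in the standardized form of Figure \ref{fig:cutSystemToSpheres} exhibits $H_{1,n}$ as the standard genus $g$ handlebody in $S^3$. Cutting $\Sigma$ along $\mathcal{C}_1$ then produces a $2g$-punctured planar surface in $S^3$, and capping each pair of boundary circles by a sphere pair realizes the standard Kirby picture of $\natural^{g} S^1 \times B^3$, with $\Sigma$ embedded in $S^3$ as a planar surface recapped through the $g$ sphere pairs.

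For steps 3 and 4, the proof of Proposition \ref{prop:handledecomposition} attaches 2-handles along curves on $\Sigma$: for each $k$ with $2\leq k\leq n$, the attaching link consists of those curves in $\mathcal{C}_k$ dual to curves in $\mathcal{C}_{k-1}$, which together form the 2-handles of sector $X_{k-1}$. To display these as a disjoint link in $S^3$, I would push the $k$-th family off $\Sigma$ into the collar $\Sigma\times[0,1]$ (oriented away from the point at infinity), placing it at the level $\Sigma\times\{(n-k)/n\}$. This is exactly step 3. Step 4 assigns each attaching curve the surface framing coming from $\Sigma$, matching the framing used in Proposition \ref{prop:handledecomposition}.

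The main issue to address is why the 2-handles of sector $X_n$ (curves in $\mathcal{C}_1$ dual to $\mathcal{C}_n$) need not be drawn explicitly. After attaching the 1-handles and the 2-handles from step 3, successive applications of Lemma \ref{lem:dualhandlebody} show that the boundary of the resulting 4-manifold has the Heegaard splitting $H_{1,n}\cup_\Sigma H_{n-1,n}$, which is $\partial X_n \cong \#^{k_n}S^1\times S^2$. By Laudenbach and Po\'enaru's theorem, this boundary is capped uniquely by $\natural^{k_n}S^1\times B^3 \cong X_n$, contributing the implicit $k_n$ 3-handles and 4-handle standard in any Kirby diagram of a closed 4-manifold. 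Hence the algorithm of steps 1--4 produces a Kirby diagram for $X$.
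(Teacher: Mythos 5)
Your proof is correct and follows the paper's own argument: both invoke Proposition \ref{prop:handledecomposition}, realize the standardized $\mathcal{C}_1$ and sphere pairs as the 0- and 1-handles, and then push the dual curves of each $\mathcal{C}_k$ into the collar $\Sigma\times[0,1]$ with surface framing. You additionally spell out a point the paper leaves implicit — that the curves of $\mathcal{C}_1$ dual to $\mathcal{C}_n$ need not be drawn because $X_n$ is absorbed into the 3- and 4-handles via Laudenbach--Po\'enaru — which is a correct and worthwhile clarification.
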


\begin{proof}
Given a multisection of $X$, begin with a handle decomposition as in Proposition \ref{prop:handledecomposition}. Standardising $\mathcal{C}_1$ to appear as in Figure \ref{fig:cutSystemToSpheres} and replacing each curve with a pair of spheres produces a Kirby diagram of the 1 handles of this handle decomposition, together with the Heegaard surface of the boundary of the 1-handles. In Proposition \ref{prop:handledecomposition}, the 2-handles are attached with surface framing in progressively shallower neighbourhoods of a collar neighbourhood $\Sigma \times [0,1]$ of this Heegaard surface, where the $[0,1]$ factor goes towards the ``inside'' handlebody (i.e. the handlebody not containing the point at infinity). Projecting them down into the boundary of the 0- and the 1-handles as in Figure \ref{fig:MStoKD} gives the desired result.
\end{proof}

\begin{figure}
    \centering
    \includegraphics[scale = .18]{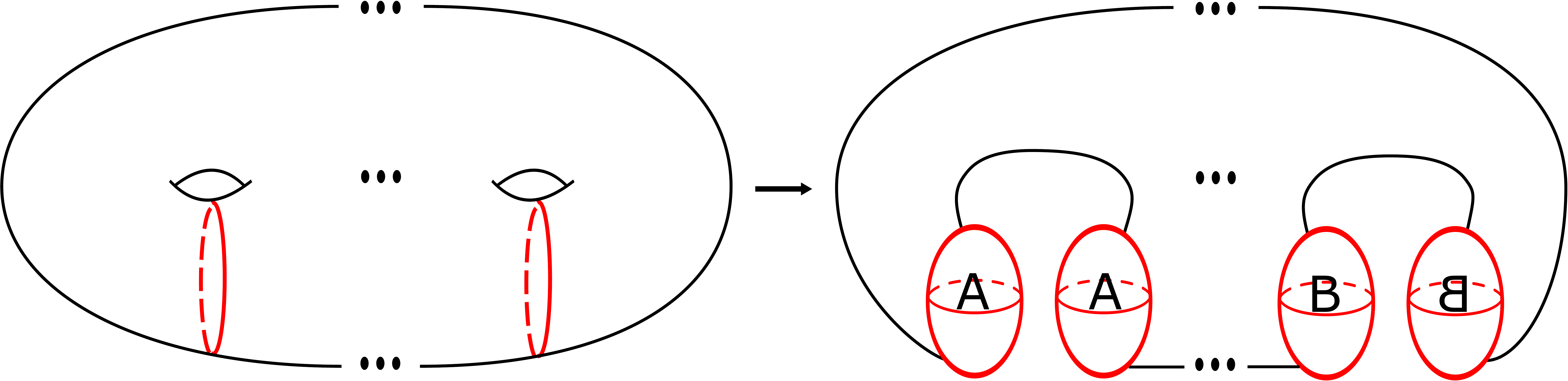}
    \caption{Each curve in the standard cut system becomes a pair of 2-spheres when converting to a Kirby diagram.}
    \label{fig:cutSystemToSpheres}
\end{figure}

\begin{remark}
This is a generalization of the way one usually obtains a Kirby diagram from a trisection diagram. The main difference is that in this case, one must take care to attach the 2-handle curves in the correct order. 
\end{remark}

\begin{example}
\label{ex:kirby2CP2}
We illustrate these methods with an example, finding a Kirby diagram from the multisection diagram in Figure \ref{kirby2CP2} (a), where the order of the curves is red, blue, green, and orange. The red curve is standard, so we can obtain a Kirby diagram by pushing the curves of the diagram out of the surface as in Proposition \ref{prop:kirbydiagramfrommultisection}. The first three curves are given their surface framing, and the red curve becomes a 1-handle (in dotted circle notation). An easy Kirby calculus argument shows that this manifold is indeed $\cp\#\cp$. 

\begin{figure}[ht]
    \centering
    \includegraphics[width=0.9\textwidth]{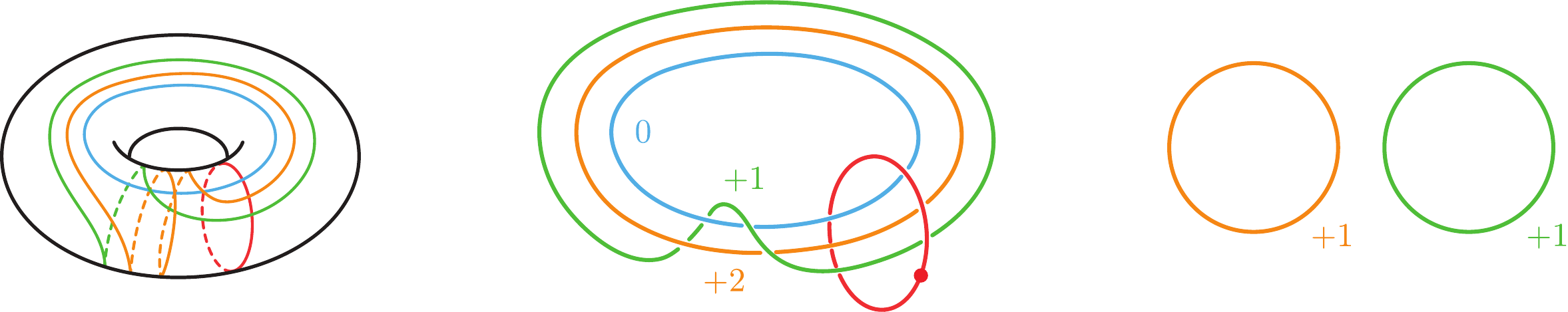}
    \put(-377,-20){(a)}
    \put(-222,-20){(b)}
    \put(-60,-20){(c)}
    \caption{Obtaining a Kirby diagram from a multisection. In (a), we start with a genus 1 multisection diagram. After pushing the curves out correctly, we obtain the Kirby diagram in (b). After slides and cancelling a 1/2- handle pair, we obtain the diagram of $\cp\#\cp$ in (c).}%
    \label{kirby2CP2}
\end{figure}
\end{example}

\begin{remark}
These arguments apply equally well to the case of bisections. In this case, we can start our handle decomposition at either $H_1$ or $H_2$, and we will have no 3- or 4-handles. As a consequence, the procedure for converting a bisection diagram to a Kirby diagram is the same as in the closed case, except that there are no 3- or 4-handles.
\end{remark}

\subsection{Multisections from handle decompositions}\label{sec:handlesToMultisections}

We now show how to convert handle decompositions and Kirby diagrams into multisections and multisection diagrams. For trisections, this process was outlined in Lemma 14 of \cite{GayKir16}, and was studied carefully in \cite{MeiSchZup16}, where the authors introduced the notion of a Heegaard-Kirby diagram. We follow these references but make appropriate modifications for our setting. We also take extra care to obtain a multisection diagram where possible, rather than just an abstract multisection. The main advantages of these modifications are extra flexibility, and the reduction of the genus of the surface. 

\begin{definition}
Let $H$ be a handlebody, and let $c_1$ be a curve on $\boundary H$. We say that $c_1$ is \emph{dual} to $H$ if there exists a properly embedded disk $D$ in $H$ such that $|c_1 \cap D| = 1$. We denote by $H(c_1)$ the handlebody obtained by pushing $c_1$ into $H$ and doing surgery along $c_1$ with surface framing.
\end{definition}

Note that Lemma \ref{lem:dualhandlebody} guarantees that the manifold $H(c_1)$ is indeed a handlebody. Given a curve $c_2$ dual to $H(c_1)$ we will write $H(c_1, c_2)$ to denote the handlebody $H(c_1)(c_2)$. We inductively define $H(c_1,\dots.,c_n)$ in the obvious manner. We emphasize that this notation specifies an ordered process; for example, the curve $c_2$ may not be dual to $H$. Using this notation, we define an intermediary object between a Kirby diagram and a multisection diagram.

\begin{definition}
A \emph{multisection prediagram} is a triple $(\Sigma_g, C_0, (c_1,\dots,c_n))$ where:
\begin{enumerate}
    \item $\Sigma_g$ is a genus $g$ surface, $C_0$ is a cut system determining a handlebody $H$, and $(c_1, \dots, c_n)$ is an ordered collection of curves on $\Sigma$
    \item $c_1$ is dual to $H$, and for all $i>1$, $c_i$ is dual to $H(c_1,\dots,c_{i-1})$.
    \item $H(c_1,\dots, c_n) = H$.
\end{enumerate}
\end{definition}

To construct a multisection diagram we will often start by finding a multisection prediagram within a Kirby diagram. The procedure for doing this is deferred briefly until Lemma \ref{lem:KDtoPrediagram}. Our next task is to show how to convert this prediagram into an honest multisection diagram, which is the content of the following lemma. 

\begin{lemma}
A multisection prediagram determines a thin multisection diagram. 
\end{lemma}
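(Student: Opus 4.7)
The plan is to construct the cut systems $\mathcal{C}_1, \dots, \mathcal{C}_n$ inductively, maintaining the invariant that $\mathcal{C}_i$ is a cut system for the handlebody $H_i := H(c_1, \dots, c_{i-1})$ (so $H_1 = H$). Initialize $\mathcal{C}_1 := C_0$. Given $\mathcal{C}_i$, the hypothesis that $c_i$ is dual to $H_i$ provides a compressing disk $D_i \subset H_i$ with $|\partial D_i \cap c_i| = 1$. Extend $\partial D_i$ to a cut system for $H_i$, which is related to $\mathcal{C}_i$ by handle slides; apply these slides and rename the result $\mathcal{C}_i$, so that $\mathcal{C}_i = \{\alpha_1, \alpha_2, \dots, \alpha_g\}$ with $\alpha_1 = \partial D_i$, $|\alpha_1 \cap c_i| = 1$, and $\alpha_j \cap c_i = \emptyset$ for $j \geq 2$. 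Now define
\[
\mathcal{C}_{i+1} := \{c_i, \alpha_2, \dots, \alpha_g\}.
\]
Lemma~\ref{lem:dualhandlebody} ensures that $c_i$ bounds a disk in $H_{i+1} := H_i(c_i)$, while the compressing disks for the $\alpha_j$ with $j \geq 2$ can be made disjoint from $c_i$ and so persist through the surgery. Thus $\mathcal{C}_{i+1}$ is a cut system for $H_{i+1}$.

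I next verify the thinness condition $k_i = g-1$. By construction, $\mathcal{C}_i$ and $\mathcal{C}_{i+1}$ share the $g-1$ curves $\alpha_2, \dots, \alpha_g$, while the remaining curves $\alpha_1$ and $c_i$ meet transversely in a single point. Each matching curve bounds a disk on both sides of the Heegaard surface, contributing an $S^1 \times S^2$ connect summand, and the intersecting pair reduces to the genus-one Heegaard splitting of $S^3$. Hence $(\Sigma; \mathcal{C}_i, \mathcal{C}_{i+1})$ represents $\#^{g-1} S^1 \times S^2$. An apparent worry is that handle slides performed at step $i$ might spoil the previously constructed diagram $(\Sigma; \mathcal{C}_{i-1}, \mathcal{C}_i)$; but slides preserve the underlying handlebody, so the 3-manifold represented by the earlier Heegaard splitting is unchanged and the thinness condition persists.

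The main subtlety is the cyclic closing, namely that $(\Sigma; \mathcal{C}_n, \mathcal{C}_1)$ also represents $\#^{g-1} S^1 \times S^2$. A further iteration of the construction would produce an $(n{+}1)$-st cut system for $H(c_1,\dots,c_n) = H$; this is handle-slide equivalent to $C_0 = \mathcal{C}_1$, but need not equal it, and modifying $\mathcal{C}_1$ to match would propagate changes backwards. The resolution is that the 3-manifold associated to a Heegaard diagram depends only on the pair of underlying handlebodies, not on the specific cut systems used to describe them. Since $\mathcal{C}_n$ determines $H_n$ and $\mathcal{C}_1 = C_0$ determines $H$, and since $H_n$ and $H$ differ by surgery along the dual curve $c_n$, the argument used above for consecutive pairs applies: after a suitable sequence of slides on each cut system one obtains the thin form with $g-1$ matching curves and one dual pair, so the splitting $\overline{H_n} \cup_\Sigma H$ is $\#^{g-1} S^1 \times S^2$. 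This completes the construction of a thin multisection diagram.
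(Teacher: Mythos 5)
Your proposal is correct and follows essentially the same inductive construction as the paper: arrange a cut system for $H(c_1,\dots,c_{i-1})$ containing a dual curve to $c_i$, slide the remaining curves off $c_i$, and swap the dual curve for $c_i$, so that consecutive cut systems share $g-1$ curves and have one geometrically dual pair. Your extra care about the cyclic closing condition and the harmlessness of the intermediate handle slides (since the represented $3$-manifold depends only on the underlying handlebodies) is a welcome elaboration of points the paper leaves implicit.
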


\begin{proof}
Given a multisection prediagram, $(\Sigma, C_0, (c_1, \dots, c_n))$ we obtain a multisection diagram \allowbreak $(\Sigma;C_0, \allowbreak C_1,\dots,C_{n-1})$  via the following inductive procedure. Suppose we have constructed the cut systems $C_0,\dots,C_i$; we will show how to construct $C_{i+1}$. Let $c_{i'}^1$ be a curve bounding a disk in $H(c_1,\dots,c_i)$ which is dual to $c_{i+1}$, and let $C_i' = (c_{i'}^1, c_{i'}^2,\dots, c_{i'}^g)$ be a cut system of $H(c_1,\dots,c_i)$ containing this curve. For all $k \neq 1$ we may remove any intersections between $c_{i'}^k$ and $c_{i+1}$ by sliding $c_{i'}^k$ over $c_{i'}^1$. Denote the result of sliding $c_{i'}^k$ by $c_{i+1}^k$. The cut system $C_{i+1}$ is given by $(c_{i+1}, c_{i+1}^2,\dots,c_{i+1}^n)$. By construction, each adjacent pair of cut systems in this diagram are a Heegaard splitting of $\natural^{g-1}S^1\times S^2$.
\end{proof}

We now show how to obtain a multisection prediagram from a Kirby diagram. This process, and its proof, are essentially the inverse of the procedure outlined in Proposition \ref{prop:kirbydiagramfrommultisection}. 

\begin{lemma}\label{lem:KDtoPrediagram}
Let $X$ be a 4-manifold presented as a Kirby diagram drawn in $S^3$. Let $\Sigma$ be a Heegaard surface for the boundary of the 0- and 1-handles, and $C_0$ be a cut system for $H$, one of the handlebodies in this Heegaard decomposition. Let $L = (L_1, L_2, \dots, L_n)$ be the framed attaching link for the 2-handles. Suppose that $L$ is projected onto $\Sigma$ so that:
\begin{enumerate}
    \item Each component $L_i$ is embedded on $\Sigma$ with handle framing induced by the surface framing;
    \item The projection of $L_1$ on $\Sigma$ is dual to $H$, and for $i>1$, $L_i$ is dual to $H(L_1, \dots, L_{i-1})$;
    \item The projection of each $L_i$ respects the original link in the following sense: parameterize a collar of $\Sigma\subset H$ as $\Sigma\times [0,1]$ where $\Sigma$ is identified with $\Sigma \times \{0\}$. Pushing each $L_i$ into the level $\Sigma\times \{(n-i)/n\}$ recovers the original Kirby diagram.
\end{enumerate}
Then, the triple $(\Sigma, C_0, (L_1,\dots, L_n))$ is a multisection prediagram for $X$.
\end{lemma}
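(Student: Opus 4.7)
The plan is to verify the three conditions in the definition of a multisection prediagram for the triple $(\Sigma, C_0, (L_1, \ldots, L_n))$. Condition (1) is immediate: the surface, the cut system $C_0$ for $H$, and the ordered collection of curves on $\Sigma$ are all supplied directly by the hypothesis. Condition (2) is essentially a direct restatement of hypothesis (2) of the lemma, which asserts the required iterated duality of the $L_i$; iterated application of Lemma \ref{lem:dualhandlebody} ensures that each intermediate $H(L_1, \ldots, L_i)$ is indeed a handlebody, so that the next duality condition even makes sense.

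The substantive task is verifying condition (3), namely $H(L_1, \ldots, L_n) = H$. My approach is to exploit that the Kirby diagram represents a closed 4-manifold $X$, so the 4-manifold $Y$ obtained by attaching the 0-, 1-, and 2-handles has boundary $\partial Y \cong \#^{k'} S^1 \times S^2$ for some $k'$, capped off by a 4-dimensional 1-handlebody of 3- and 4-handles that is essentially unique by Laudenbach-Po\'enaru. Under hypothesis (1) (handle framing equal to the surface framing) together with hypothesis (2) (iterated duality), an inductive application of Lemma \ref{lem:dualhandlebody} shows that $\Sigma$ persists as a Heegaard surface for $\partial Y$ with splitting $H(L_1, \ldots, L_n) \cup_\Sigma H'$, where $H'$ is the handlebody complementary to $H$ in the original splitting $\partial X_1 = H \cup_\Sigma H'$, unchanged by the successive 2-handle surgeries on the $H$-side.

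This lemma is essentially the inverse of Proposition \ref{prop:kirbydiagramfrommultisection}, and the layered projection of hypothesis (3) is precisely the geometric structure enabling this inversion. Running the construction of Proposition \ref{prop:handledecomposition} in reverse, the unique 3-/4-handle cap must serve as the cyclically closing sector of the implied multisection: its induced Heegaard splitting, matched with the splitting $H(L_1, \ldots, L_n) \cup_\Sigma H'$ of $\partial Y$ coming from the 2-handle side, forces $H(L_1, \ldots, L_n) = H$. The main obstacle I expect is this last step: carefully tracing how the uniqueness of the cap, combined with the layered arrangement of hypothesis (3), pins down the terminal handlebody to equal the starting one. This is a matter of bookkeeping Heegaard surfaces through iterated surgeries, which is controlled by the duality conditions in hypothesis (2) but must be verified rigorously to ensure the closing-up identity does not merely hold up to diffeomorphism.
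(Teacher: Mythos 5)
Your overall route is the same as the paper's (very brief) proof: conditions (1) and (2) of the prediagram definition are read off directly from the hypotheses, with Lemma \ref{lem:dualhandlebody} guaranteeing that each intermediate $H(L_1,\dots,L_i)$ is a handlebody, and the identification of the resulting manifold with $X$ is obtained by inverting Proposition \ref{prop:kirbydiagramfrommultisection}. The problem is your argument for the closing condition $H(L_1,\dots,L_n)=H$. Laudenbach--Po\'enaru gives uniqueness of the $3$-/$4$-handle cap only up to diffeomorphism of the capped-off manifold; the cap is glued to $\partial Y\cong\#^{k'}S^1\times S^2$ as an abstract $4$-dimensional $1$-handlebody and does not single out a Heegaard splitting of $\partial Y$ that could be ``matched'' against $H(L_1,\dots,L_n)\cup_\Sigma H'$. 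The equality $H(L_1,\dots,L_n)=H$ is a statement about which curves on $\Sigma$ bound disks in the terminal handlebody, and nothing in the uniqueness of the cap constrains that isotopy class.

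Moreover, the closing condition genuinely does not follow from hypotheses (1)--(3): for the standard one-$2$-handle Kirby diagram of $\cp$, take $\Sigma$ the genus one Heegaard torus of $S^3$, $H$ the solid torus whose meridian is the $(1,0)$-curve, and $L_1$ projected to the $(1,1)$-curve, whose surface framing is $+1$. All three hypotheses hold, but by Lemma \ref{lem:dualhandlebody} the curve $(1,1)$ bounds a disk in $H(L_1)$, so $H(L_1)\neq H$. Thus the closing identity is an additional constraint on the projection (satisfied in the paper's examples such as $S^2\times S^2$, but not automatic), and it must either be added as a hypothesis or verified separately in each application; it cannot be derived from closedness of $X$ plus uniqueness of the cap as you propose. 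To be fair, the paper's own proof elides this point entirely, but your proposal makes the stronger claim that the condition is forced, and that claim is false. Everything in your write-up before that final step is sound.
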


\begin{proof}
The fact that this is a multisection prediagram follows directly from the definitions. To see that this does indeed result in a multisection prediagram for the given manifold, one can convert the prediagram to a diagram and use the process of Proposition \ref{prop:kirbydiagramfrommultisection} to see that this returns the Kirby diagram we started with.
\end{proof}

\begin{example} 
We illustrate the method of Lemma \ref{lem:KDtoPrediagram} with two simple examples, obtaining a bisection diagram for $B^4$ and a 4-section diagram for $S^2\times S^2$. For a more involved use of this procedure, see Section \ref{sec:mazurex}. 

To obtain a 4-section diagram of $S^2\times S^2$, we start with the usual Kirby diagram with two 2-handles. As in Lemma \ref{lem:KDtoPrediagram}, we take a Heegaard surface for the boundary of 0- and 1-handles ($S^3$ in this case), so that we can project the 2-handles onto this surface with the property that their framings agree with the induced surface framing. Because there are so few curves, the duality condition is met automatically. In fact, because the genus of this multisection is one, the multisection prediagram shown in Figure \ref{fig:multisectiondiagramS2xS2} is already a multisection diagram. To verify this diagram is correct, we can obtain a Kirby digram for $S^2\times S^2$ from this diagram using Proposition \ref{prop:kirbydiagramfrommultisection}. 

\begin{figure}[ht]
    \centering
    \includegraphics[width=0.9\textwidth]{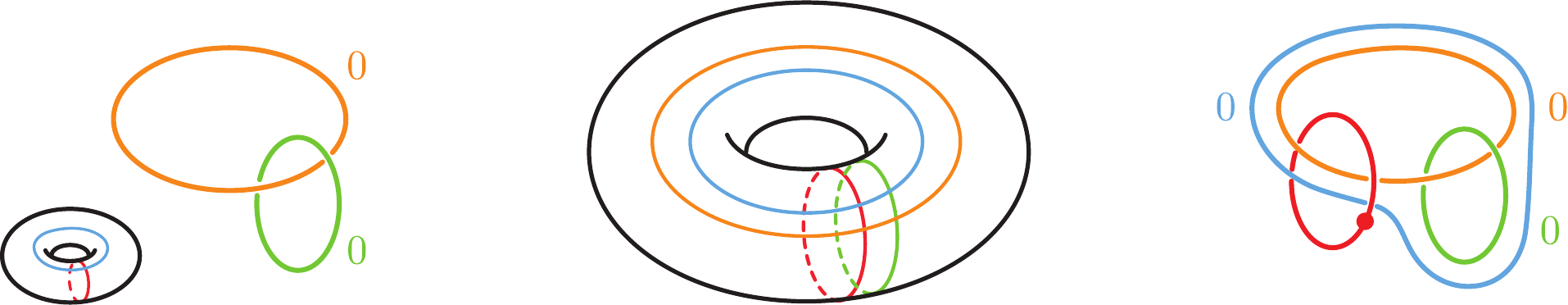}
    \put(-370,-20){(a)}
    \put(-208,-20){(b)}
    \put(-55,-20){(c)}
    \caption{The process of obtaining a 4-section diagram for $S^2\times S^2$. In (a), a Kirby diagram with a Heegaard surface for $S^3$. In (b), the curves projected onto this Heegaard surface. In (c), the Kirby diagram corresponding to this 4-section diagram.}
    \label{fig:multisectiondiagramS2xS2}
\end{figure}

Similarly, we can obtain a bisection diagram for $B^4$. We begin with the Kirby diagram in Figure \ref{fig:multisectiondiagramB4}, and use a genus one Heegaard surface for the boundary of the 0- and 1-handles ($S^1\times S^2$ in this case). We can easily project the 2-handle curve onto this surface, and as before, this bisection prediagram is already a bisection diagram. Note that essentially the same process may be used to produce an unbalanced trisection diagram for $S^4$. 

\begin{figure}[ht]
    \centering
    \includegraphics[width=0.9\textwidth]{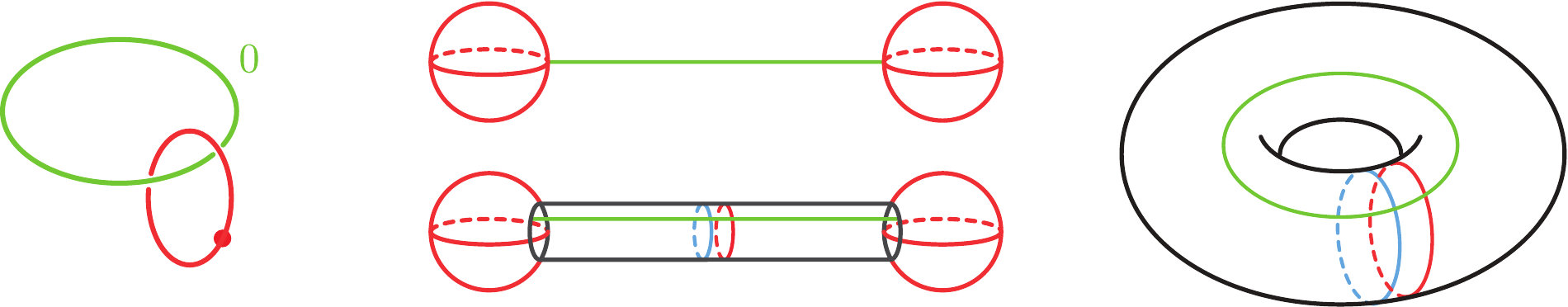}
    \put(-395,-20){(a)}
    \put(-237,-20){(b)}
    \put(-66,-20){(c)}
    \caption{The process of obtaining a bisection diagram from a Kirby diagram of $B^4$. In (a), a Kirby diagram for $B^4$. In (b), a Heegaard surface for the boundary of the 0/1-handles of this decomposition. In (c), the 2-handle curve projected onto this Heegaard surface.}
    \label{fig:multisectiondiagramB4}
\end{figure}
\end{example}

\begin{example}
Some of the simplest 4-manifolds with boundary are knot traces, usually denoted $X_n(K)$. These manifolds are obtained by attaching an $n$-framed 2-handle to $B^4$ along a knot $K$. Adapting a technique often used in Heegaard-Floer homology for drawing doubly pointed Heegaard diagrams of a knot \cite{OszSza04Knot}, we describe a quick way to produce bisection diagrams of $-X_n(K)$ from a knot diagram for $K$. This process is illustrated in Figure \ref{fig:knottrace}.

To produce this multisection, begin by flattening a knot diagram for $K \subset S^3$ into a plane $P$, and thicken the resulting 4-valent graph, $G$, to a handlebody. The boundary of this handlebody is a Heegaard surface, $\Sigma$, and will be our bisection surface. To obtain $C_1$, take loops around each bounded component of $P \backslash G$ and push them onto $\Sigma$. To obtain $C_2$, take a meridian of $K$ together with a curve at each vertex of $G$ which depends on the crossing information (illustrated in Figure \ref{fig:knottracecrossing}). 

The knot $K$ can be projected to $\Sigma$ so that it only intersects $C_2$ in the meridian curve. We may perform twists of $K$ about this curve until the surface framing matches our desired framing, and we call the resulting curve $K^p$. The cut system $C_3$ is obtained from $C_2$ by replacing the meridian curve with $K^p$. This bisection diagram produces a manifold with a number of cancelling handles, together with a 2-handle attached along the knot with surface framing. A minor technical note: the bisection diagram starts with what is usually the negatively oriented handlebody of $S^3$ so that, by orientation conventions, we have actually produced a multisection diagram for $-X_n(K)$.

\begin{figure}
    \vspace{2mm}
    \centering
    \includegraphics[width=0.7\textwidth]{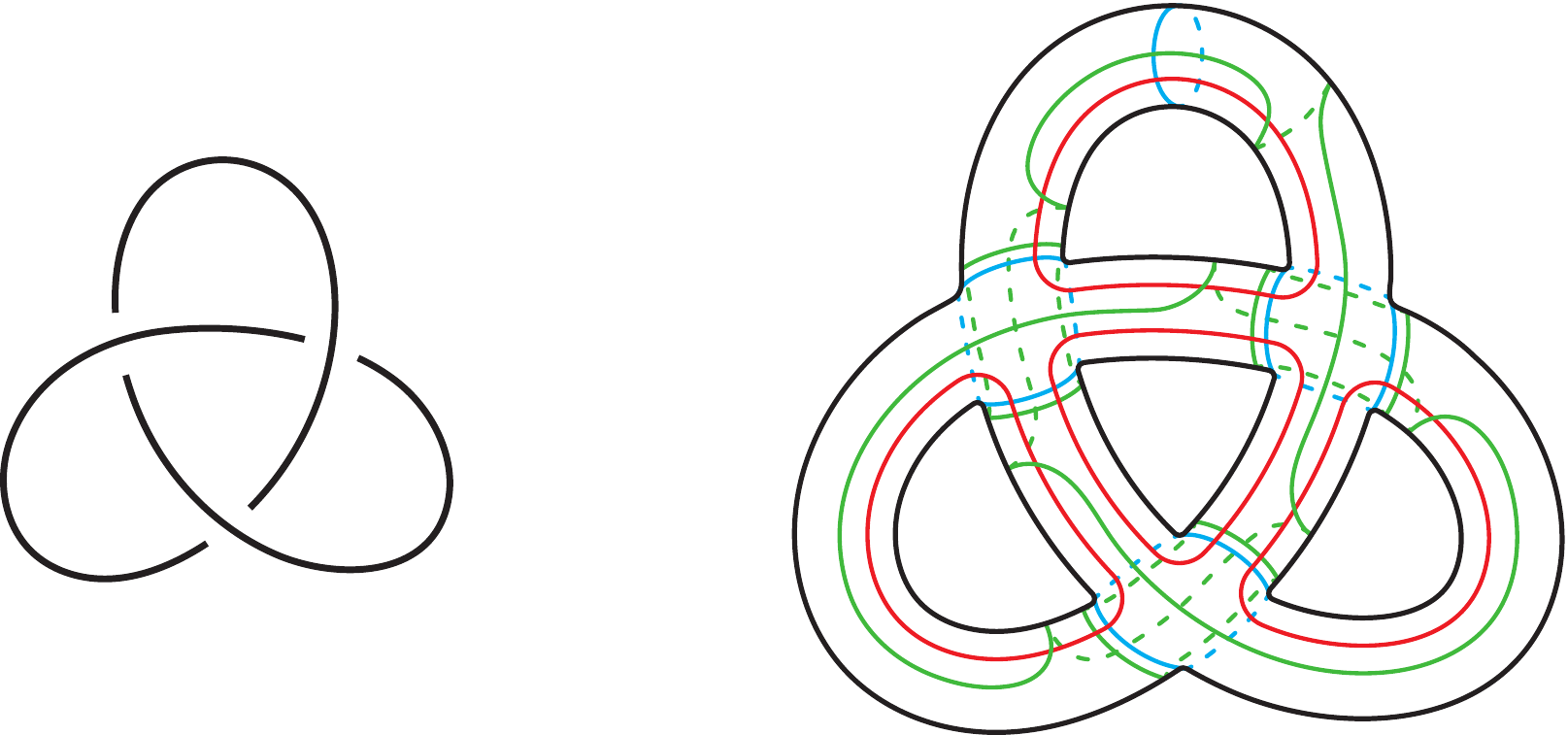}
    \put(-250,105){\Large $+1$}
    \put(-105,-15){$-X_{+1}(3_1)$}
    \caption{A bisection diagram for $-X_{+1}(3_1)$; the boundary of this bisection is the Poincar\'e homology sphere. The cut systems $C_1$, $C_2$, and $C_3$ are red, blue, and green, respectively.}
    \label{fig:knottrace}
\end{figure}

\begin{figure}
    \centering
    \includegraphics[width=0.4\textwidth]{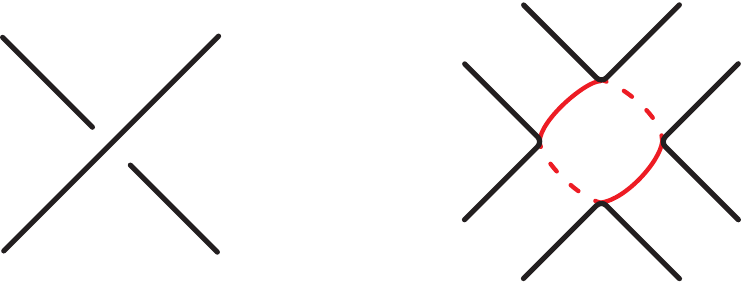}
    \caption{The curve corresponding to a crossing of a knot, which records the 3-dimensional crossing information.}
    \label{fig:knottracecrossing}
\end{figure}

\end{example}

\section{Subsection operations}\label{sec:SubsectionOperations}

In this section, we will show how to cut and reglue along subsections of a multisection, so that the resulting manifold inherits a natural multisection structure. We will also show how the resulting multisection diagrams change under such a procedure. In a trisection, each subsection or its complement is diffeomorphic to $\natural^k S^1 \times B^3$, so cutting and regluing a sector can never produce a different manifold \cite{LauPoe72}. By contrast, using techniques of this section, we will show in Theorem \ref{thm:corkCurves} that any two smooth structures of a fixed 4-manifold are related by modifying some 4-section.

The boundary of a subsection has a natural Heegaard splitting, and so gluing multisections must respect this structure. The following is a formal definition of a map respecting a given Heegaard splitting.

\begin{definition}
The \emph{Goeritz group} of a Heegaard splitting $M=H_1\cup_\Sigma H_2$ is the subgroup of $\text{Mod}(\Sigma)$:
$$\text{Mod}(M;H_1,H_2,\Sigma)=\{\phi \in \text{Mod}(\Sigma): \text{$\phi$ extends across $H_1$ and $H_2$}\}.$$
\end{definition}

Equivalently, this is the subgroup mapping classes of $M$ fixing $\Sigma$ together with a normal orientation of the surface. The structure of this group can be quite intricate: even determining a presentation for Heegaard splitting of the 3-sphere is difficult. Such a presentation is currently unknown for genus greater than three, and the genus three case was resolved only recently \cite{FreSch18}. Moreover, if the Heegaard splitting is stabilized, which is often the case in our setting, then the Goeritz group contains pseudo-Anosov elements \cite{JohRub13}. 

Suppose $X$ is a 4-manifold with multisection $X_I=(X_1,\dots,X_n)$. Consider a subsection $X_{i,j}$, whose boundary is given by the Heegaard splitting $M^3 = H_i \cup_\Sigma H_{j+1}$. Given a map $\phi \in \text{Mod}(M;H_i,\allowbreak H_{j+1},\Sigma)$, the manifold $(X-X_{i,j}) \cup_{\phi} X_{i,j}$ inherits the structure of a multisection. While an arbitrary map of a 3-manifold need not respect a given Heegaard splitting, the following lemma states that a map does respect the splitting up to stabilizations. 

\begin{lemma}\label{lem:MapFixHeegaard}
Let $M$ be a closed 3-manifold, $\phi: M \to M$ be a homeomorphism, and $M= H_1 \cup_{\Sigma} H_2$ be a Heegaard splitting. Then there exists a stabilization of this Heegaard splitting, $H_1' \cup_{\Sigma'} H_2'$, for which $\phi \in \textnormal{Mod}(M;H_1',H_2',\Sigma')$.
\end{lemma}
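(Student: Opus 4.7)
The strategy is to invoke the Reidemeister--Singer stabilization theorem, which states that any two Heegaard splittings of a fixed closed 3-manifold become isotopic after sufficiently many stabilizations of each. Since $\phi$ is a homeomorphism, pushing forward the given splitting yields another genus $g$ Heegaard splitting $M = \phi(H_1) \cup_{\phi(\Sigma)} \phi(H_2)$. Applying Reidemeister--Singer to $\Sigma$ and $\phi(\Sigma)$, there exists $k$ so that the $k$-fold stabilizations of each are isotopic as Heegaard splittings (with sides respected). Moreover, because $\phi$ is a diffeomorphism, it carries any stabilization of $\Sigma$ to a stabilization of $\phi(\Sigma)$ of the same genus, so the $k$-fold stabilization of $\phi(\Sigma)$ can be chosen to be $\phi(\Sigma^{(k)})$.

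The subtle point is that Reidemeister--Singer produces some stabilization of $\phi(\Sigma)$ isotopic to $\Sigma^{(k)}$, but a priori this stabilization need not equal $\phi(\Sigma^{(k)})$ on the nose. To resolve this, note that once we have two genus $g+k$ Heegaard splittings of $M$ in the same manifold, we may apply Reidemeister--Singer again to them: after replacing $k$ with a larger value $N$, any two stabilizations of $\Sigma$ at level $N$ are ambient isotopic in $M$. Thus we may assume $\phi(\Sigma^{(N)})$ is isotopic to $\Sigma^{(N)}$ as a Heegaard splitting, sides included.

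Set $\Sigma' = \Sigma^{(N)}$ with handlebodies $H_1', H_2'$. Choose an ambient isotopy $\psi_t : M \to M$ with $\psi_0 = \id$ and $\psi_1(\phi(\Sigma')) = \Sigma'$ such that $\psi_1(\phi(H_i')) = H_i'$. Then $\psi_1 \circ \phi$ is isotopic to $\phi$ and preserves the stabilized splitting: its restriction to $\Sigma'$ is a self-homeomorphism that extends over each of $H_1'$ and $H_2'$, via $\psi_1 \circ \phi$ itself. Since the Goeritz group is defined on mapping classes, and $\phi$ and $\psi_1 \circ \phi$ represent the same class, this gives $\phi \in \textnormal{Mod}(M; H_1', H_2', \Sigma')$, as desired.

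The main obstacle is the bookkeeping around how $\phi$ interacts with stabilization: a single application of Reidemeister--Singer produces a common stabilization only in an abstract sense, and the point requiring care is that we can force $\phi(\Sigma')$ and $\Sigma'$ into literally the same isotopy class by iterating the stabilization theorem. Apart from this issue, the argument reduces to the standard Reidemeister--Singer package together with the observation that diffeomorphisms carry stabilizations to stabilizations.
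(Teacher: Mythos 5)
Your argument is correct and is essentially the paper's own proof: compare $\Sigma$ with $\phi(\Sigma)$ via the Reidemeister--Singer theorem, use that a homeomorphism carries stabilizations to stabilizations, and then absorb the resulting isotopy into $\phi$ so that the (unchanged) mapping class preserves the stabilized splitting. You are in fact more careful than the paper about the one subtle point --- that the common stabilization produced by Reidemeister--Singer can be taken to be $\phi(\Sigma^{(N)})$ itself, which ultimately rests on the standard uniqueness of stabilization --- whereas the paper compresses this into the single sentence ``the stabilization operation commutes with homeomorphisms.''
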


\begin{proof}
Consider the Heegaard splitting $\phi(H_1) \cup_{\phi(\Sigma)} \phi(H_2)$. This decomposition may not be isotopic to $H_1 \cup H_2$, but the Reidemeister-Singer theorem \cite{Rei33} \cite{Sin33} guarantees that these Heegaard splittings can be made isotopic after some number of stabilizations. Since the stabilization operation commutes with homeomorphisms, $\phi$ fixes some stabilization of $H_1 \cup H_2$.
\end{proof}

Our next goal is to modify a multisection to induce a Heegaard splitting on the boundary, while leaving the underlying manifold unchanged. This operation will be the analogue of the unbalanced stabilization introduced in \cite{MeiSchZup16}. 

\begin{definition}\label{def:Stabilization}
Let $X$ be a multisected 4-manifold with sectors $X_I=X_1,\dots, X_n$, and suppose that this $n$-section is prescribed by a multisection diagram $(\Sigma, C_{1},\dots,C_{n})$, where $C_k$ is a cut system for $H_{k,k+1}$. Let $i$ and $j$ be natural numbers with $i < j$. An \emph{$(i,j)$-stabilization} is the multisection obtained by taking the connected sum of $X$ with the multisection of $S^4$ corresponding to the diagram in Figure $\ref{fig:stabDiagram}$. Equivalently, one introduces additional genus and extends the cut systems by one of either two curves $c_1$ or $c_2$, with $|c_1 \cap c_2|=1$. If $i \leq k \leq j-1$ then we extend $C_{k,k+1}$ by $c_1$, and otherwise, we extend $C_{k,k+1}$ by $c_2$.  
\end{definition}

This operation does not change the underlying manifold, since it amounts to taking a connected sum with $S^4$. To see this explicitly, observe that we can convert the diagram in Figure \ref{fig:stabDiagram} to a Kirby diagram consisting solely of a canceling 1-2 and 2-3 pair. Moreover, if a cross manifold is given by $H_{k,k+1}\cup_\Sigma H_{l,l+1}$ where $k<l$, then an $(i,j)$-stabilization changes the induced Heegaard splitting by a stabilization, if $i<k<j<l$, and by a connected sum with $S^1 \times S^2$ otherwise. Therefore, given any cross-section $H_{k,k+1}\cup_\Sigma H_{l,l+1}$ with $k-l \geq 2$, the $(k+1,l+1)$-stabilization will induce a stabilization on this cross-section. This is the main observation for Proposition \ref{prop:reglueAfterStab}, which essentially states that subsections of a multisection can be reglued while respecting the structure of the multisection.

\begin{figure}[ht]
    \centering
    \includegraphics[width=0.3\textwidth]{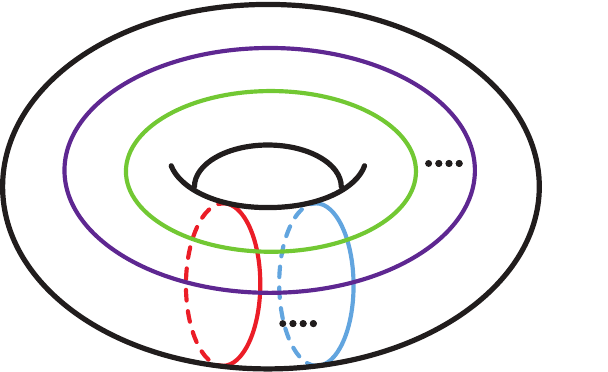}
    \put(-93,10){\small $c_1$}
    \put(-27,47){\small $c_2$}
    \caption{A multisection diagram for $S^4$. Taking the connected sum with this diagram does not change the underlying 4-manifold, but modifies cross-sections by either adding an $S^1 \times S^2$ summand, or inducing a stabilization on their Heegaard splitting.}
    \label{fig:stabDiagram}
\end{figure}

We also define a stabilization operation for bisections. Suppose that $X$ is a 4-manifold with non-empty connected boundary, with bisection $X= X_1 \cup X_2$ prescribed by a bisection diagram $(\Sigma; C_{1}, C_{2}, C_3)$, and that the Heegaard splitting on $\partial X$ is given by the curves $C_1$ and $C_3$. To perform an \emph{$1$-stabilization} of $X_1 \cup X_2$, take an arc $\alpha\subset \partial X_2$, and set $X_1'=X_1\cup \nu(\alpha)$ , and $X_2'=X_2- \nu(\alpha)$. Exchanging the roles of $X_1$ and $X_2$ in the previous discussion gives the process for a 2-stabilization. Diagrammatically, an $i$-stabilization corresponds to taking the connected sum with one of the two diagrams in Figure \ref{fig:bisectionStabilization}. 

\begin{figure}[ht]
    \centering
    \includegraphics[width=0.5\textwidth]{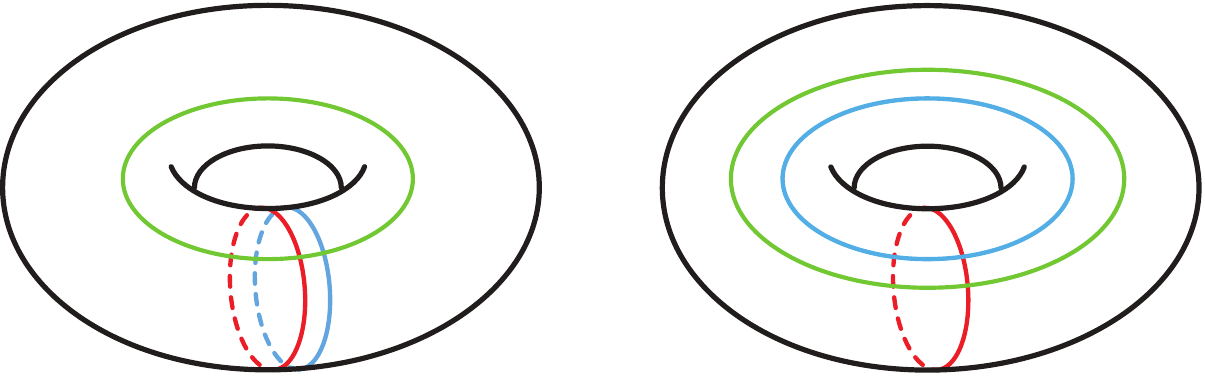}
    \caption{The two possible stabilizations for a bisection (where $C_1$, $C_2$, $C_3$ are colored red, blue, and green respectively). On the left, the diagram corresponding to a $1$-stabilization, and on the right, a $2$-stabilization.}
    \label{fig:bisectionStabilization}
\end{figure}

\begin{proposition}
\label{prop:reglueAfterStab}
Let $X$ be a multisected 4-manifold with sectors $X_I=X_1,\dots, X_n$, and let $X_{i,j}$ be a subsection. Then for any $\phi \in \textnormal{Mod}(\partial X_{i,j}),$ there exists some stabilization, $X_I'$, of $X_I$ such that $(X_i' - X_{i,j}') \cup_{\phi} X_{i,j}'$ has a multisection with sectors $X_I'$.
\end{proposition}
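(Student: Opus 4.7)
The plan is to reduce to the situation where $\phi$ respects a Heegaard splitting of $\partial X_{i,j}$ arising from the multisection, and then cut out $X_{i,j}$ and reglue by $\phi$. The boundary of the subsection $X_{i,j}$ carries the Heegaard splitting $H_{i-1,i}\cup_\Sigma H_{j,j+1}$. By Lemma \ref{lem:MapFixHeegaard}, after some finite number of stabilizations of this splitting, $\phi$ will lie in the Goeritz group of the stabilized splitting. The key observation is that each such 3-dimensional Heegaard stabilization can be realized at the 4-dimensional level by an $(i,j+1)$-stabilization of $X_I$, which does not change the underlying manifold $X$.

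To see this, I would invoke Definition \ref{def:Stabilization}: the $(i,j+1)$-stabilization extends the boundary cut systems $C_{i-1}$ and $C_j$ of the subsection by $c_2$ and $c_1$ respectively (with $|c_1\cap c_2|=1$), while all interior cut systems $C_i,\dots,C_{j-1}$ are extended by $c_1$. By the discussion following that definition, this precisely induces a genus-one stabilization of the Heegaard splitting on $\partial X_{i,j}$, since one side of the new handle contributes a compressing disk to $H_{i-1,i}$ and the other to $H_{j,j+1}$, meeting transversally in a single point on the new genus-one summand of $\Sigma$. Iterating this operation the number of times demanded by Lemma \ref{lem:MapFixHeegaard} produces the desired stabilized multisection $X_I'$, in which $\phi \in \textnormal{Mod}(\partial X_{i,j}';H'_{i-1,i},H'_{j,j+1},\Sigma')$.

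Once $\phi$ lies in this Goeritz group, it extends over each boundary handlebody, so regluing $X_{i,j}'$ to its complement by $\phi$ identifies $H'_{i-1,i}\subset \partial X_{i,j}'$ with the corresponding handlebody on $\partial (X - X_{i,j}')$, and similarly for $H'_{j,j+1}$. The internal structure of $X_{i,j}'$ (its decomposition into sectors $X'_i,\dots,X'_j$ together with the handlebodies $H'_{k,k+1}$ for $i\le k<j$) is transported rigidly by $\phi$, and the global central surface is the image of $\Sigma'$. All of the multisection axioms — the topological types of the sectors, the pairwise intersections, and the induced Heegaard splittings on each $\partial X'_i$ — are preserved, so the resulting manifold inherits an $n$-section with sectors $X_I'$, as claimed.

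The main obstacle I anticipate is the bookkeeping in the second paragraph: one must carefully match the sequence of 3-dimensional Heegaard stabilizations demanded by Lemma \ref{lem:MapFixHeegaard} with the specific 4-dimensional stabilizations performed, and verify that the $(i,j+1)$-stabilization really induces the standard (and not some twisted) Heegaard stabilization on $\partial X_{i,j}$. This ultimately reduces to a local calculation in a neighborhood of the connected-sum point on $\Sigma$, using the $S^4$ multisection of Figure \ref{fig:stabDiagram}, where the effect on adjacent and non-adjacent cross-sections can be read off directly from the intersection pattern of $c_1$ and $c_2$.
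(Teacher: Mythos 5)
Your proposal is correct and follows essentially the same route as the paper: invoke Lemma \ref{lem:MapFixHeegaard} (Reidemeister--Singer) to make $\phi$ respect a stabilized splitting of $\partial X_{i,j}$, realize those $3$-dimensional stabilizations by $(i,j+1)$-stabilizations of the multisection, and then reglue using the Goeritz-group condition. Your index $(i,j+1)$ is in fact the one consistent with the paper's formula ``the $(k+1,l+1)$-stabilization stabilizes the cross-section $H_{k,k+1}\cup H_{l,l+1}$,'' and your extra verification that the boundary cut systems of the subsection receive the two dual curves $c_1$ and $c_2$ is exactly the check the paper leaves implicit.
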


\begin{proof}
If $|i-j| = 1$, then the Heegaard splitting of $\partial X_{i,j}$ is the unique genus $g$ Heegaard splitting of $\#^k S^1 \times S^2$, and so any regluing map respects this Heegaard splitting. If $|i-j| > 1$, a given map may not respect the Heegaard splitting $\partial X_{i,j} = H_i \cup H_{j+1}$, but by Lemma \ref{lem:MapFixHeegaard}, this map respects some stabilization of $H_i \cup H_{j+1}$, which we call  $H_i' \cup H_{j+1}'$. After sufficiently many $(i+1,j+1)$ stabilizations (which induce stabilizations on $H_i \cup H_{j+1}$), we get a new multisection with sectors $X_I'$, and $\partial X_{i,j}' = H_i' \cup H_{j+1}'$. Since $\phi \in \text{Mod}(H_i' \cup H_{j+1}';H_i',H_{j+1}',\Sigma)$, the new manifold $(X' - X_{i,j}') \cup_{\phi} X_{i,j}'$ inherits a natural multisection structure coming from the previous pieces.
\end{proof}

Next, we study the effect of this cut and paste operation on diagrams. The following discussion is illustrated in Figure \ref{fig:gluingCutSystems}. Fix the central surface $\Sigma$ for a given multisection of a 4-manifold $X$, with sectors $X_I=X_1,\dots,X_n$. On the level of the cut systems, the effect of cutting out and regluing $X_{i,j}$ by a map $\phi: \boundary X_{i,j} \to \boundary X_{i,j}$ which respects the Heegaard splitting $\partial X_{i,j}= H_i \cup H_{j+1}$ amounts to applying $\phi\vert_{\Sigma}$ to each of the cut systems $C_i, C_{i+1}, \dots C_{j}, C_{j+1}$. Since $\phi$ extends across $H_{i}$ and $H_{j+1}$, $\phi(C_i)$ and $\phi(C_{j+1})$ are handle slide equivalent to $C_i$ and $C_{j+1}$ respectively. This leads immediately to the following proposition, which will be used frequently throughout this paper. 

\begin{figure}[ht]
    \centering
    \includegraphics[scale=.33]{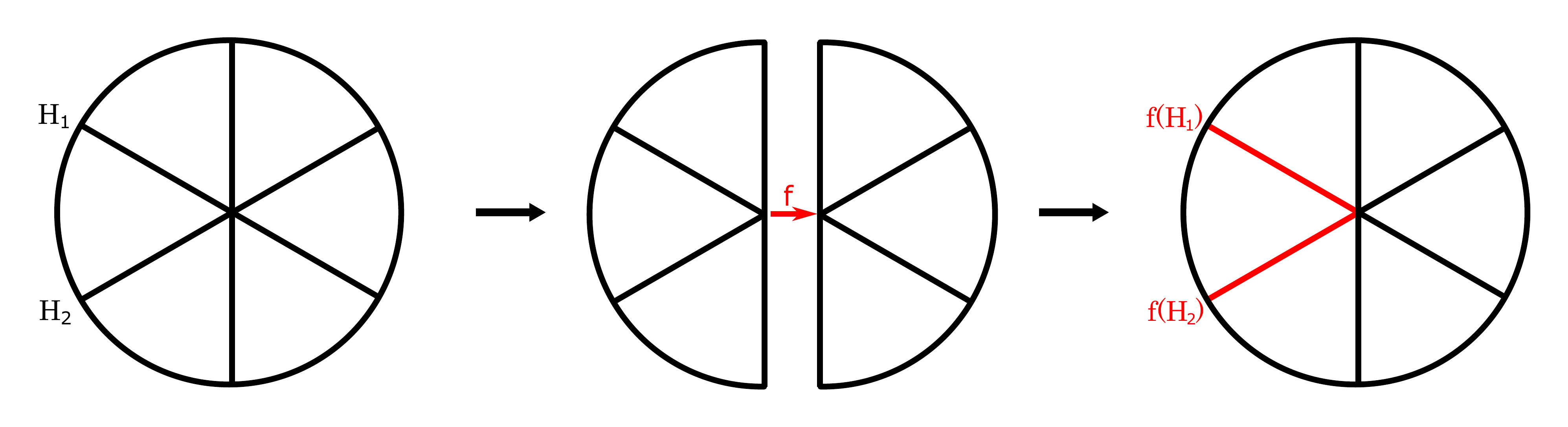}
    \caption{ A schematic illustrating the process of cutting out and regluing a subsection by a map respecting the boundary Heegaard splitting. The handlebodies on the boundary do not change, but the handlebodies in the subsection being reglued may change.}
    \label{fig:gluingCutSystems}
\end{figure}

\begin{proposition}
\label{prop:CurvesAfterGluing}
Let $X$ be a multisected 4-manifold with sectors $X_I=X_1,\dots, X_n$, described by a diagram $(\Sigma; C_1,\dots, C_n)$. Let $X_{i,j}$ be a subsection and $\phi: \boundary X_{i,j} \to \boundary X_{i,j}$ a homeomorphism respecting the Heegaard splitting of $\partial X_{i,j}$. Then, $(\Sigma; C_1,\dots,C_i,\allowbreak \phi(C_{i+1}),\allowbreak \phi(C_{i+2}),\dots, \phi(C_{j-1}),\allowbreak\phi(C_j), C_{j+1}, C_{j+2},\allowbreak\dots, C_n)$ is a multisection diagram for $(X- X_{i,j})\cup_{\phi} X_{i,j}$.
\end{proposition}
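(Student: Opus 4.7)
The plan is to first build the multisection of the reglued manifold $Y = (X - X_{i,j}) \cup_\phi X_{i,j}$ geometrically, then read off its diagram, and finally resolve two redundant cut systems using handle slides.

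First, because $\phi$ lies in the Goeritz group of $\partial X_{i,j} = H_{i-1,i} \cup_\Sigma H_{j,j+1}$, it carries each boundary handlebody to itself and, in particular, sends $\Sigma$ to $\Sigma$. Consequently, cutting along $\partial X_{i,j}$ and regluing by $\phi$ preserves each 4-dimensional sector $X_1,\dots,X_n$ together with each 3-dimensional cross-section handlebody, while identifying the two copies of $\Sigma$ by $\phi|_\Sigma$. The assembled $Y$ therefore inherits an $n$-section with the same sectors, whose central surface $\Sigma_Y$ may be identified with either copy of $\Sigma$.

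Second, I would identify $\Sigma_Y$ with the copy of $\Sigma$ sitting inside $X - X_{i,j}$. Under this identification, the sectors in the complement contribute their original cut systems $C_1,\dots,C_i,C_{j+1},\dots,C_n$ verbatim. The sectors in $X_{i,j}$ live on the other copy of $\Sigma$, which is identified with $\Sigma_Y$ via $\phi|_\Sigma$; pushing forward accordingly, the interior cut systems of $X_{i,j}$ appear on $\Sigma_Y$ as $\phi(C_{i+1}),\dots,\phi(C_j)$, while the two boundary cut systems of $X_{i,j}$ appear as $\phi(C_i)$ and $\phi(C_{j+1})$. These latter two are redundant with $C_i$ and $C_{j+1}$ already present from the complement side.

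The third step resolves this redundancy. Because $\phi$ extends across $H_{i-1,i}$ by hypothesis, $\phi(C_i)$ is another cut system for the same handlebody $H_{i-1,i}$ as $C_i$; since any two cut systems of a given handlebody are related by handle slides, and a multisection diagram is invariant under handle slides within each cut system, we may replace $\phi(C_i)$ by $C_i$. An analogous argument at the other boundary lets us replace $\phi(C_{j+1})$ by $C_{j+1}$, producing exactly the diagram in the statement. The main obstacle here is not depth but bookkeeping: keeping straight which copy of $\Sigma$ hosts each cut system and therefore where the $\phi$ twist appears. Once this is pinned down, the argument reduces to the standard fact that a cut system for a handlebody is unique up to handle slides, which is precisely the content of the assumption that $\phi$ respects the Heegaard splitting.
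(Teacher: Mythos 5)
Your proposal is correct and follows essentially the same route as the paper: the paper's argument is precisely that regluing by $\phi$ amounts to applying $\phi|_\Sigma$ to the cut systems of the subsection, and that $\phi(C_i)$ and $\phi(C_{j+1})$ are handle-slide equivalent to $C_i$ and $C_{j+1}$ because $\phi$ extends across the boundary handlebodies. Your explicit bookkeeping of the two copies of $\Sigma$ just spells out what the paper leaves implicit.
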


Next, we introduce operations which allow us to remove and introduce additional sectors. We begin with the process of removing a sector.

\begin{definition}
Let $X$ be a multisected 4-manifold with sectors $X_I=X_1,\dots, X_n$. Suppose that the sectors $X_i$ and $X_{i+1}$ have the property that $X_i \cup_{H_{i,i+1}}X_{i+1} \cong \natural^k S^1 \times B^3$ for some $k$. Then the \emph{contraction} of $X_I$ along $H_{i,i+1}$ is the $n-1$-section with sectors $X_I'$, where for $0 < k < i$, $X_k' = X_k$, for $k=i,$ $X_i' = X_i \cup X_{i+1}$, and for $i< k \leq n-1$, $X_i' = X_{i+1}$. 
\end{definition}

The inverse procedure of contraction introduces additional sectors to a multisection. While this may seem undesirable at first, this operation reduces the complexity of each individual sector, which has certain advantages. In fact, by iterating this procedure, any multisection can be turned into a thin multisection. The proof of this fact is delayed until Proposition \ref{prop:thinMultisections}.

\begin{definition}\label{def:expansion}
Let $X$ be a multisected 4-manifold with sectors $X_I=X_1,\dots, X_n$. Suppose that $X_i$ admits a bisection $X_i = A \cup_{H'} B$ inducing the original Heegaard splitting $\partial X_i=H_{i-1,i} \cup_\Sigma H_{i,i+1}$. Then the \emph{expansion} of $X_I$ along $H'$ is the $(n+1)$- section with sectors $X_I'$, where for $0 < k < i$, $X_k' = X_k$, $X_i' = A$, $X_{i+1}' = B$, and for $i+1<k \leq n+1$, $X_k' = X_{i-1}$.
\end{definition}

We remark that one can always trivially expand a multisection by adding an additional sector which is just the product neighbourhood of some $H_{i,i+1}$, though this will not reduce the complexity of any sector. Sectors created in this fashion are superfluous, and the following lemma allows us to remove them.

\begin{lemma}
\label{lem:removeRedundantSector}
Let $n\geq 3$. Suppose $X$ admits an $n$-section of genus $g$ and that for some $i$, $k_i = g$. Then, $X$ admits an $(n-1)$-section of genus $g$.
\end{lemma}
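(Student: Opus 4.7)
The plan is to exploit the fact that $k_i = g$ forces $X_i \cong \natural^g S^1 \times B^3$ with a genus-$g$ Heegaard splitting $H_{i-1,i}\cup_\Sigma H_{i,i+1}$ on its boundary $\#^g S^1 \times S^2$, so that $X_i$ is as thin as possible. Under this constraint I expect $X_i$ to be a trivial $I$-bundle over its bounding handlebody, compatibly with the boundary data, which will let me absorb $X_i$ into the neighboring sector $X_{i+1}$ without changing the diffeomorphism type of the union. Once this product structure is in place, the contraction operation immediately yields the desired $(n-1)$-section.

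To obtain the product structure, I would appeal directly to the construction in the proof of Proposition \ref{prop:handledecomposition}. There each sector $X_j$ is built as the cobordism obtained by attaching $g-k_j$ 2-handles to $H_{j-1,j}\times I$ along a link in $H_{j,j+1}$. In our case $g-k_i=0$, so no 2-handles are attached and the constructed cobordism is simply the trivial product $H_{i-1,i}\times I$, with its two boundary handlebodies identified with $H_{i-1,i}$ and $H_{i,i+1}$. The appeal to Laudenbach--Po\'enaru already present in that proof ensures that the original sector $X_i$ agrees with this trivial cobordism via a diffeomorphism respecting the boundary handlebodies. In particular, $X_i$ can be identified with $H_{i,i+1}\times I$ in such a way that $H_{i,i+1}\subset\partial X_i$ corresponds to $H_{i,i+1}\times\{0\}$.

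Given this collar description, $X_i\cup_{H_{i,i+1}} X_{i+1}$ is a collar attachment and therefore diffeomorphic to $X_{i+1}\cong \natural^{k_{i+1}} S^1\times B^3$, which verifies the hypothesis of the contraction operation. Contracting $X_I$ along $H_{i,i+1}$ then produces an $(n-1)$-section of $X$ with the same central surface $\Sigma_g$, and hence of the same genus. The multisection axioms for the new decomposition are routine to check: the merged sector $Y_i:=X_i\cup X_{i+1}$ has boundary $H_{i-1,i}\cup_\Sigma H_{i+1,i+2}$, which is a genus-$g$ Heegaard splitting of $\#^{k_{i+1}} S^1\times S^2$; the adjacent intersections in the new indexing are these same 3-dimensional handlebodies; and any non-adjacent intersection in the new indexing reduces to $\Sigma_g$ by the original axioms.

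The main subtlety is ensuring that the product structure on $X_i$ respects the specific boundary handlebody $H_{i,i+1}$ prescribed by the multisection, rather than some other parameterization of $\partial X_i$; this is exactly the role played by Laudenbach--Po\'enaru in Proposition \ref{prop:handledecomposition}, and borrowing that ingredient appears to be the cleanest way through.
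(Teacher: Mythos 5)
Your proposal is correct and follows essentially the same route as the paper: identify $X_i$ with a product $H\times I$ on one of its boundary handlebodies (the paper asserts this directly from $k_i=g$ and the genus-$g$ splitting of $\partial X_i$, while you route the same fact through the construction in Proposition \ref{prop:handledecomposition} and Laudenbach--Po\'enaru), then absorb it into an adjacent sector as a collar attachment. The one point worth making explicit is the role of $n\geq 3$, which the paper flags: it ensures the two boundary handlebodies of the merged sector are genuinely distinct, so the contracted decomposition still satisfies the multisection axioms.
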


\begin{proof}
By definition, $k_i = g$ implies that $X_i \cong \natural^{g} S^1 \times B^3$. Moreover, $\partial X_i = H_{(i-1),i} \cup H_{i,(i+1)}$ where both $H_i$ and $H_{i+1}$ are handlebodies of genus $g$, and so $X_i$ is diffeomorphic to $H_i \times I$. Note that $\partial X_{i-1} \cap X_i = H_{(i-1),i}$, and so $X_{i-1}' = X_{i-1} \cup_{H_{(i-1),i}} X_{i}$ is topologically just a collar on $X_{i-1}$, i.e., still a 4-dimensional 1-handlebody. The boundary of $X_i'$ has the requisite Heegaard splitting $\partial X_i'=H_{(i-1),(i-2)} \cup H_{i,(i+1)}$, where the assumption that $n \geq 3$ ensures that these are not the same handlebody. Therefore,  the decomposition $X = X_1 \cup X_2 \cup \dots\cup X_{i-1}' \cup X_{i+1} \cup \dots \cup X_{n}$ is a genus $g$ multisection of $X$.
\end{proof}

\section{Multisection genus}
 
In this section, we study the genus of a multisection, which is a natural measure of complexity. First, we classify all 4-section diagrams on the torus, along with their corresponding cross-sections. If a 4-manifold, $X$, admits a genus one 4-section in which some $k_i=1$, then by Lemma \ref{lem:removeRedundantSector} $X$ admits a genus one trisection. Genus one trisections are easy to classify \cite{GayKir16}: these have diagrams described in Figure \ref{fig:TorusTrisections}. We therefore focus on manifolds admitting $(1;0)$- 4-sections; a standard diagram for each such manifold is illustrated in Figure \ref{fig:4sectionstorus}.

\begin{figure}[ht]
    \centering
    \includegraphics[width=0.7\textwidth]{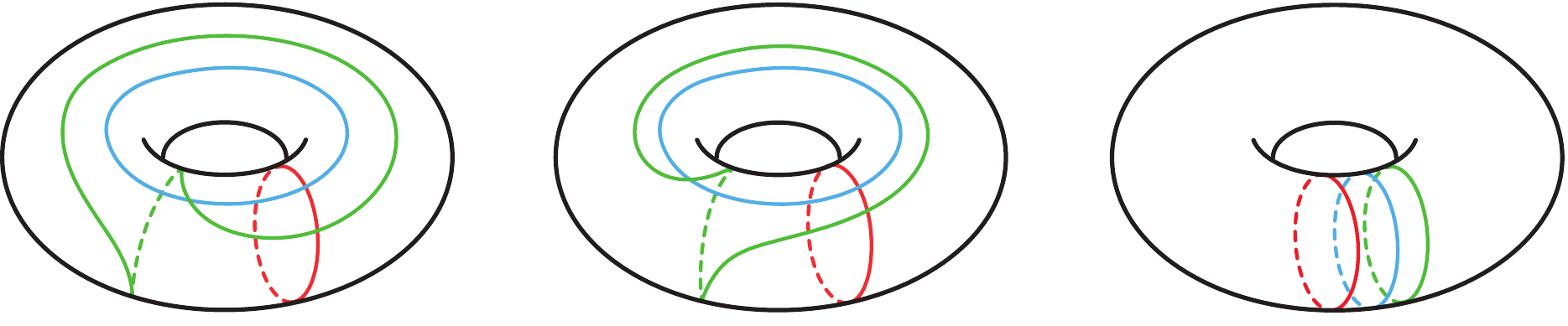}
    \put(-285,-20){(a)}
    \put(-170,-20){(b)}
    \put(-55,-20){(c)}
    \caption{The possible  $(1;0)$- trisection diagrams. These are: (a) $\cp$, (b) $\cpbar$, and (c) $S^1\times S^3$. All cross-sections are either $S^3$ or $S^1\times S^2$. }%
    \label{fig:TorusTrisections}
\end{figure}

\begin{proposition}\label{prop:genusoneclassification}
Suppose that a 4-manifold $X$ admits a $(1;0)$- $4$-section. Then this 4-section may be described by one of the diagrams in Table \ref{tab:4sections}. In particular, $X$ is diffeomorphic to $S^2\times S^2$, $S^2\simtimes S^2$, $\cp\#\cp$, or $\cpbar\#\cpbar$. 
\end{proposition}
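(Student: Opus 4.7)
The plan is to classify $(1;0)$-4-section diagrams up to the natural equivalences ($\mathrm{Mod}(T^2) \cong SL(2,\mathbb{Z})$ acting diagonally on all cut systems, cyclic rotation of cut systems, and reversal of the cyclic order), and then identify the underlying 4-manifold in each resulting normal form via Proposition \ref{prop:kirbydiagramfrommultisection}. Since a cut system on $T^2$ is a single essential simple closed curve classified up to isotopy by its slope in $\mathbb{Q}\cup\{\infty\}$, and consecutive cut systems are required to form a genus-one Heegaard diagram of $S^3$ (i.e.\ intersect exactly once geometrically), $(1;0)$-4-section diagrams correspond to closed walks of length four in the Farey graph.

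First I would normalize $(C_1, C_2) = (\infty, 0)$ by the $SL(2,\mathbb{Z})$-action. The Farey-adjacency constraints then force $[C_3] = 1/n$ and $[C_4] = m$ for integers $n, m$ with $|1 - mn| = 1$, so $mn \in \{0,2\}$. This produces three families: (i) $n = 0$, i.e.\ $C_3 = C_1$, with $m \in \mathbb{Z}$ arbitrary; (ii) $m = 0$, i.e.\ $C_4 = C_2$, with $n \in \mathbb{Z}$ arbitrary; and (iii) $mn = 2$, giving the finite list $(m,n) \in \{(\pm 1, \pm 2),(\pm 2, \pm 1)\}$. A cyclic rotation of the cut systems followed by a renormalization by an upper-triangular element of $SL(2,\mathbb{Z})$ swaps families (i) and (ii) and sends the integer parameter $m$ in (i) to $-m$; combined with reversal of the cyclic order, I would show that families (i) and (ii) collapse to two normal forms distinguished by the parity of $m$, while family (iii) collapses to two normal forms distinguished by sign.

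For each of the four normal forms, Proposition \ref{prop:kirbydiagramfrommultisection} yields a Kirby diagram with a single dotted 1-handle (from the standardized $C_1$) and three surface-framed 2-handles (from $C_2,C_3,C_4$ at successively shallower levels), which after a few handle slides and a single 1/2-handle pair cancellation becomes the standard Kirby diagram of one of $S^2\times S^2$, $S^2\simtimes S^2$, $\cp\#\cp$, or $\cpbar\#\cpbar$; the four are distinguished by their intersection forms. As a consistency check, Corollary \ref{prop:handleDecompFromThinMultisection} gives $\chi(X) = 4$, and the 2-handle along $C_2$ kills the 1-handle generator (since $|C_1 \cdot C_2| = 1$), so $\pi_1(X) = 1$ and $b_2(X) = 2$; Freedman's classification then forces $X$ to lie in the list above. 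The main obstacle will be the final reduction step for families (i)--(ii): one must verify that the rotation-renormalization action on the parameter $m$ has image $2\mathbb{Z}$, so that only $m\pmod{2}$ survives as an invariant, and separately that this parity distinguishes $S^2\times S^2$ from $S^2\simtimes S^2$, which is most easily seen by computing the resulting intersection form (even versus odd).
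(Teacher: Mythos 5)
Your overall route is the same as the paper's: normalize the first two curves to $(1,0)$ and $(0,1)$, use the geometric-duality condition and the intersection formula $|ps-qr|$ to force the constraint $mn\in\{0,2\}$ on the remaining two slopes, enumerate the solutions, and identify each manifold by converting to a Kirby diagram via Proposition \ref{prop:kirbydiagramfrommultisection}. Up to that point the proposal matches the paper essentially verbatim.

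However, the step you flag as the main obstacle would in fact fail as stated. The infinite families (i) and (ii) do \emph{not} collapse to finitely many normal forms under the diagonal $SL(2,\mathbb{Z})$-action together with cyclic rotation and reversal: the cross-section $(\Sigma;C_2,C_4)$ (respectively $(\Sigma;C_1,C_3)$) of the diagram with parameter $m$ is the lens space $L(m,1)$, which is an invariant of the diagram under all of these symmetries, so diagrams with different $|m|$ are pairwise inequivalent. This is precisely why the paper's Table \ref{tab:4sections} retains the integer parameter $n$ and records the cross-sections. The correct statement is only that the \emph{total 4-manifold} depends on $m$ through its parity, and this must be checked directly rather than deduced from a symmetry action with image $2\mathbb{Z}$ (no such action exists). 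Your own fallback supplies the repair: run the Kirby algorithm with $m$ left general. After cancelling the dotted circle against the $0$-framed meridian coming from $C_2$, one is left with a two-component link with linking matrix
$\left(\begin{smallmatrix} m & 1 \\ 1 & 0 \end{smallmatrix}\right)$,
and sliding the $m$-framed component over the $0$-framed meridian changes $m$ by $\pm 2$, so the manifold is $S^2\times S^2$ for $m$ even and $S^2\simtimes S^2$ for $m$ odd (with the analogous computation for family (iii) giving $\cp\#\cp$ and $\cpbar\#\cpbar$). With the normal-form reduction deleted and replaced by this parametric Kirby computation, your argument is correct and coincides with the paper's.
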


\begin{figure}[ht]
    \centering
    \includegraphics[width=\textwidth]{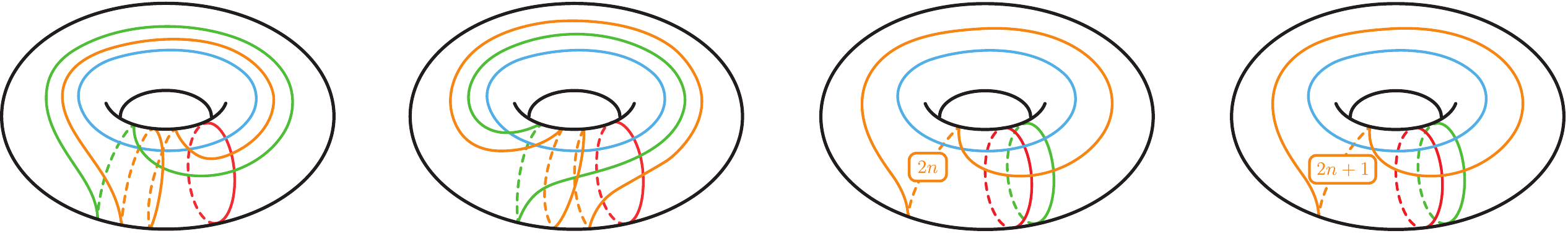}
    \put(-424,-20){(a)}
    \put(-301,-20){(b)}
    \put(-178,-20){(c)}
    \put(-55,-20){(d)}
    \caption{One possible $(1;0)$- 4-section diagram for each of the standard manifolds ($\alpha$, $\beta$, $\gamma$, and $\delta$ are red, blue, green, and orange respectively). These are: (a) $\cp\#\cp$, (b) $\cpbar\#\cpbar$, (c) $S^2\times S^2$, and (d) $S^2\simtimes S^2$. }%
    \label{fig:4sectionstorus}
\end{figure}

\proof
Suppose that $X$ admits a $(1;0)$- 4-section, with diagram \allowbreak $(\Sigma_1;\alpha,\beta,\allowbreak\gamma,\delta)$. Since $k_i=0$ for all $i$, each pair of adjacent curves are geometrically dual. Fix a basis of $H_1(\Sigma_1;\mathbb{Z})$. Then without loss of generality, we may assume that $\alpha$ and $\beta$ are the $(1,0)$ and $(0,1)$ curves on $\Sigma_1$. Assume that $\gamma=(\gamma_1,\gamma_2)$ and $\delta=(\delta_1,\delta_2)$. Since the $(p,q)-$ and $(r,s)-$ curves intersect $|ps-rq|$ times, we must have $|\gamma_1|=|\delta_2|=1$. Without loss of generality, we will take $\gamma_1=\delta_2=1$. Lastly, since $\gamma$ and $\delta$ are dual, we find that: 
\[|\gamma_1\delta_2-\gamma_2\delta_1|=|1-\gamma_2\delta_1|=1\]

The possibilities for the curves are summarized in the table below, and we can use the algorithm in Proposition \ref{prop:handledecomposition} to identify the corresponding 4-manifold. Note that the cross-section manifolds admit genus one Heegaard splittings, so are necessarily lens spaces. This completes the classification of genus one 4-sections. 

\begin{center}
\begin{table}[ht]\label{tab:4sections}
\renewcommand{\arraystretch}{1.5}%
\begin{tabular}{|c|c|c|c||c|c||c|}
\hline
$\alpha$ & $\beta$ & $\gamma$ & $\delta$ & $(\Sigma_1;\alpha,\gamma)$ & $(\Sigma_1;\beta,\delta)$ & $(\Sigma_1;\alpha,\beta,\gamma,\delta)$ \\
\hline
$(1,0)$ & $(0,1)$ & $(1,2n)$ &  $(0,1)$ & $L(2n,1)$ & $S^1\times S^2$ & $S^2\times S^2$ \\
\hline
$(1,0)$ & $(0,1)$ & $(1,0)$ &  $(2n,1)$ & $S^1\times S^2$ & $L(2n,1)$ & $S^2\times S^2$ \\
\hline
$(1,0)$ & $(0,1)$ & $(1,2n+1)$ &  $(0,1)$ & $L(2n+1,1)$ & $S^1\times S^2$ & $S^2\simtimes S^2$ \\
\hline
$(1,0)$ & $(0,1)$ & $(1,0)$ &  $(2n+1,1)$ & $S^1\times S^2$ & $L(2n+1,1)$ & $S^2\simtimes S^2$ \\
\hline
$(1,0)$ & $(0,1)$ & $(1,2)$ &  $(1,1)$ & $\RP^3$ & $S^3$ & $\CP^2\#\CP^2$\\
\hline
$(1,0)$ & $(0,1)$ & $(1,1)$ &  $(2,1)$ & $S^3$ & $\RP^3$ & $\CP^2\#\CP^2$ \\
\hline
$(1,0)$ & $(0,1)$ & $(1,-2)$ &  $(-1,1)$ & $\RP^3$ & $S^3$ & $\overline{\CP}^2\#\overline{\CP}^2$ \\
\hline
$(1,0)$ & $(0,1)$ & $(1,-1)$ &  $(-2,1)$ & $S^3$ & $\RP^3$ & $\overline{\CP}^2\#\overline{\CP}^2$ \\
\hline
\end{tabular}
\vspace{5mm}
\caption{The complete list of $(1;0)$ 4-sections.}
\end{table}
\end{center}
\vspace{-10mm}

\qed

\begin{remark}
A reader familiar with trisections will note that the manifolds described by $(1;0)-$ 4-sections are exactly the simply connected manifolds admitting genus two trisections. Indeed, if a 4-manifold $X$ admits a $(1;0)$- 4-section diagram, then by Proposition \ref{prop:turnIntoTri} we may convert this to a genus two trisection diagram. By Meier and Zupan's \cite{MeiZup17} classification of genus two trisections, $X$ must be one of the standard manifolds in Proposition \ref{prop:genusoneclassification} (there can be no $S^1\times S^3$ summands if $X$ is simply connected). 
\end{remark}

\subsection{Multisection genus and connected sum}

The question of whether trisection genus is additive under connected sum is a difficult question and, in particular, implies the Poincar\'e conjecture. In this section, we will show that multisection genus is far from additive. One reason for this is that multisections can be glued together along sectors diffeomorphic to 4-balls in order to produce multisections of a connected sum, as in Figure \ref{fig:connectedSum}. 

\begin{lemma}
Let $X$ and $X'$ be multisected 4-manifolds with sectors $X_I=X_1,\dots,X_n$ and $X_I'=X_1',\dots, X_{n'}'$, respectively, and suppose that each multisection is of genus $g$. Further, suppose that each multisection admits a sector diffeomorphic to a 4-ball. Then, $X \# X'$ admits an $(n+n'-2)$- section of genus $g$. 
\end{lemma}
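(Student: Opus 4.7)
The plan is to glue the two multisections along the boundaries of the $B^4$ sectors. Let $X_i \subset X$ be a sector with $X_i \cong B^4$, and $X'_j \subset X'$ a sector with $X'_j \cong B^4$; such sectors exist by hypothesis. Their boundaries are 3-spheres, and they inherit genus $g$ Heegaard splittings
\[
\partial X_i = H_{i-1,i} \cup_\Sigma H_{i,i+1}, \qquad \partial X'_j = H'_{j-1,j} \cup_{\Sigma'} H'_{j,j+1}
\]
from the multisection structures of $X$ and $X'$.

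By Waldhausen's uniqueness theorem, any two genus $g$ Heegaard splittings of $S^3$ are isotopic, so I would choose an orientation-reversing diffeomorphism $\phi \colon \partial X'_j \to \partial X_i$ sending $\Sigma'$ to $\Sigma$, $H'_{j-1,j}$ to $H_{i,i+1}$, and $H'_{j,j+1}$ to $H_{i-1,i}$. Since $X_i$ and $X'_j$ are 4-balls and $\phi$ reverses orientation, identifying these boundaries via $\phi$ realizes the connected sum:
\[
X \# X' \;=\; \bigl(X \setminus \mathring{X_i}\bigr) \cup_\phi \bigl(X' \setminus \mathring{X'_j}\bigr).
\]

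The resulting manifold inherits a decomposition with sectors cyclically ordered as
\[
X_{i+1}, X_{i+2}, \ldots, X_{i-1}, X'_{j+1}, X'_{j+2}, \ldots, X'_{j-1},
\]
for a total of $(n-1) + (n'-1) = n+n'-2$ sectors. The verifications are routine and I would simply check: each sector is an unchanged 4-dimensional 1-handlebody; the central surface is the identified $\Sigma = \Sigma'$, still of genus $g$; adjacencies internal to $X$ or $X'$ are inherited from the original multisections; the two new adjacencies $X_{i-1}$ with $X'_{j+1}$ and $X_{i+1}$ with $X'_{j-1}$ meet along the identified handlebodies $H_{i-1,i} = H'_{j,j+1}$ and $H_{i,i+1} = H'_{j-1,j}$, respectively; any two non-adjacent sectors meet only in $\Sigma$; and the boundary Heegaard splitting of each $\partial X_k$ is unchanged from before.

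The only substantive input is Waldhausen's theorem, which is what allows us to align the two boundary Heegaard splittings so that the multisection structures glue consistently along $S^3$. Everything else is bookkeeping about how the cyclic ordering of sectors and their pairwise intersections assemble after the identification.
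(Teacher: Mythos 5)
Your proposal is correct and follows essentially the same route as the paper: remove the two $B^4$ sectors, use Waldhausen's uniqueness of genus $g$ Heegaard splittings of $S^3$ to arrange the gluing map to match up the boundary handlebodies, and observe that the remaining $n+n'-2$ sectors assemble cyclically into a multisection of $X\# X'$. The extra bookkeeping you spell out (which handlebody is identified with which) is consistent with the paper's argument up to the choice of cyclic orientation on one of the two multisections.
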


\begin{proof}
Suppose $X_i \subset X_I$ and $X_j' \subset X_I'$ are the sectors diffeomorphic to 4-balls. We may choose these particular balls to perform the connected sum operation. In order for the resulting manifold to admit a multisection, the connected sum map must send $H_{i-1, i}$ to $H_{j-1,j}'$ and $H_{i,i+1}$ to $H_{j,j+1}$. Since these pairs of handlebodies form genus $g$ Heegaard splittings of $S^3$, which are unique up to isotopy \cite{Wal68}, the map may be arranged to satisfy this condition after an isotopy.
\end{proof}

We can also iterate this process. If a manifold admits an $n$-section of genus $g$ with at least two sectors diffeomorphic to 4-balls, then performing this connected sum operation leaves 4-ball sectors left over. We omit the proof of the following lemma, which only differs from the previous one by this observation.

\begin{lemma}\label{lem:arbitraryConnectedSum}
Let $X_1,X_2,\dots,X_n$ be multisected 4-manifolds, with sectors $X_{I_1}\allowbreak,X_{I_2},\dots,X_{I_n}$ respectively, and suppose that each multisection is of genus $g$. Further, suppose that each multisection contains at least two sectors diffeomorphic to $B^4$. Then, the manifold $\#^{a_1}X_1 \#^{a_2}X_2 \dots \#^{a_n}X_n$ also admits a genus $g$ multisection.
\end{lemma}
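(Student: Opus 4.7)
The plan is to induct on the total number of summands $N = a_1 + a_2 + \cdots + a_n$, but with a slight strengthening of the conclusion: the resulting genus $g$ multisection of $\#^{a_1}X_1 \# \cdots \#^{a_n}X_n$ itself contains at least two sectors diffeomorphic to $B^4$. This strengthened statement is what allows the iteration to go through, since the preceding lemma requires a $B^4$ sector on each side of the connected sum.

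The base case $N = 1$ is immediate: a single summand $X_i$ satisfies the hypothesis with at least two $B^4$ sectors by assumption. For the inductive step, select any index $i$ with $a_i \geq 1$, and apply the inductive hypothesis to $\#^{a_1}X_1 \# \cdots \#^{a_i - 1}X_i \# \cdots \#^{a_n}X_n$, a connected sum of $N-1$ copies. This provides a genus $g$ multisection of this manifold with at least two $B^4$ sectors. Now apply the preceding lemma to connect-sum this multisected manifold with a fresh copy of $X_i$, which also has at least two $B^4$ sectors by hypothesis. The result is a genus $g$ multisection of $\#^{a_1}X_1 \# \cdots \#^{a_n}X_n$.

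To close the induction, I verify that the ``at least two $B^4$ sectors'' property persists. Examining the construction in the preceding lemma, the connected sum is performed by removing a single $B^4$ sector from each side and identifying the resulting Heegaard splittings of $S^3$ (which is possible by uniqueness of Heegaard splittings of $S^3$, as in the preceding lemma's proof). Thus, exactly one $B^4$ sector is consumed from each side. Since each side began with at least two such sectors, at least one $B^4$ sector from each side survives into the new multisection, giving the required two.

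I do not anticipate a substantial obstacle: the genuine geometric content — aligning the two $B^4$ sectors so that their induced genus $g$ Heegaard splittings of $S^3$ can be identified by an ambient isotopy — is already handled by the preceding lemma via Waldhausen's uniqueness theorem. What remains is the combinatorial bookkeeping that keeps enough $B^4$ sectors in reserve to feed into the next application, and this is precisely what the strengthened inductive hypothesis records.
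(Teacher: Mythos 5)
Your proof is correct and matches the paper's intended argument: the paper omits the proof, noting only that it follows from the previous lemma together with the observation that a multisection with at least two $B^4$ sectors retains a spare $B^4$ sector after each connected sum, which is exactly the strengthened inductive hypothesis you formalize.
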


\begin{figure}
    \centering
    \includegraphics[scale=.15]{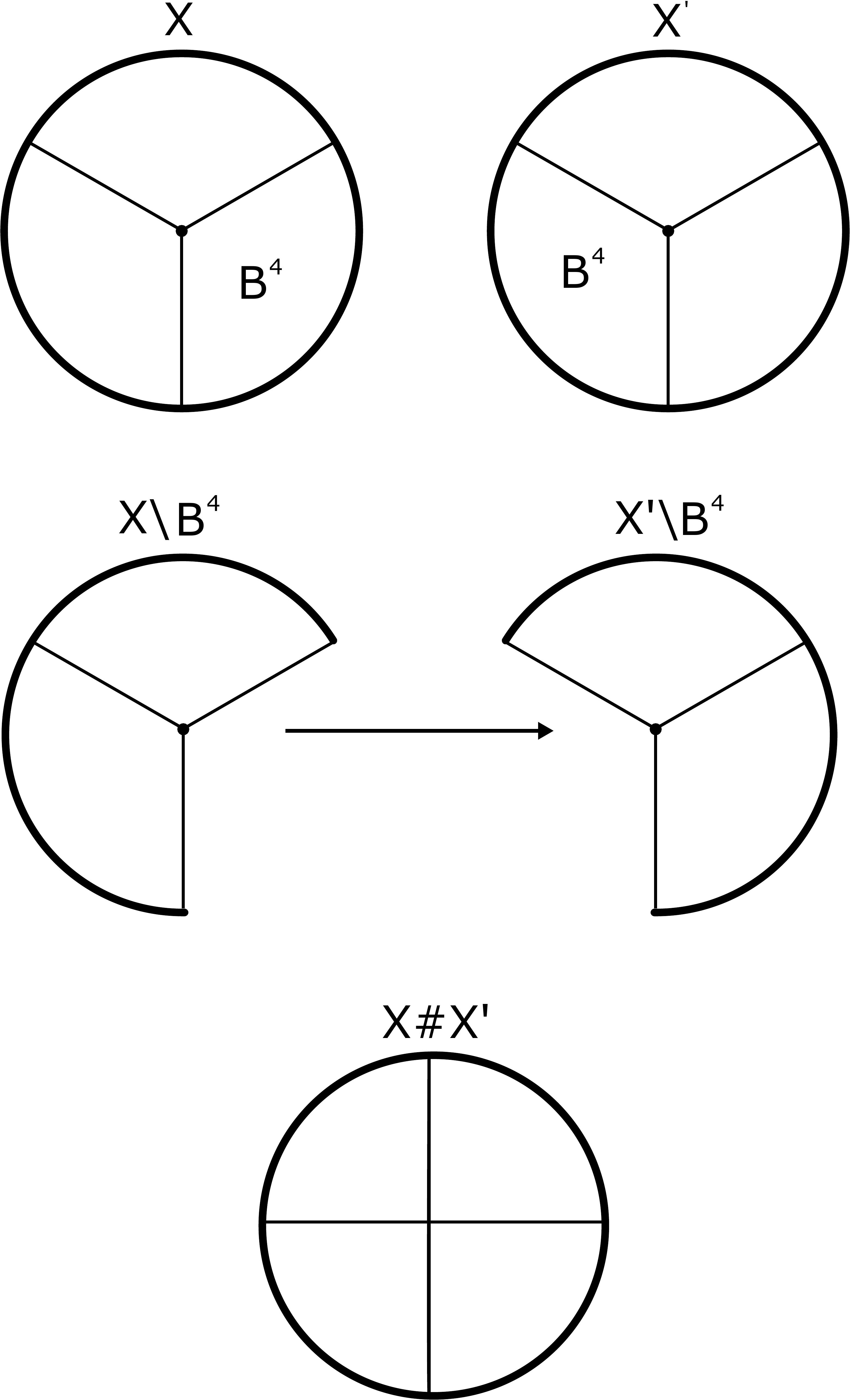}
    \caption{If $X$ and $X'$ admit multisections where each sector is a 4-ball, then $X \# X'$ admits a multisection obtained by cutting out each of these 4-balls and gluing the remaining pieces together.}
    \label{fig:connectedSum}
\end{figure}

Since $\cp$ and $\cpbar$ admit $(1;0)$- trisections, and by Proposition \ref{prop:genusoneclassification}, $S^2\times S^2$ admits a $(1;0)$- 4-section, the following proposition is immediate.

\standardManifoldsAreGenusOne

On the other hand, the trisection genus of these manifolds is unbounded. In fact, these examples fit into a more general framework: if a smooth 4-manifold admits a decomposition without 1- or 3-handles, then it is known to admit a trisection with two sectors diffeomorphic to 4-balls \cite{MeiSchZup16}. While this may seem like a restrictive class of manifold, it is conjectured that all simply connected 4-manifolds admit such a decomposition. The following question addresses this conjecture.

\begin{question}
Does there exist a simply connected 4-manifold, $X$, such that for $n\in \mathbb{N}$ the multisection genus of $\#^n X$ is unbounded?
\end{question}

By Lemma \ref{lem:arbitraryConnectedSum}, an affirmative answer would produce a simply connected 4-manifold which requires either 1- or 3-handles. In light of \cite{BCKM19}, such an example would also produce a simply connected manifold which is not an irregular 3-fold branched cover over $S^4$.

\section{Corks and 4-sections}\label{sec:corkTwists}

\subsection{Corks twists as subsection operations}

For our purposes, a \emph{cork} is a pair $(A, \tau)$, where $A$ is a contractible compact 4-manifold and $\tau$ is an involution on its boundary, usually called a \emph{cork twist}. We will often suppress the involution from this notation. If $A \subset X^4$ then we will call the manifold $X_A = (X - A) \cup_\tau A$ a cork twist of $X$ about $A$. Corks are at the center of the failure of the $h$-cobordism theorem in dimension 4, as well as exotic behavior of 4-manifolds, and this is illustrated by the following theorem.

\begin{theorem}[\cite{CurFreHsiSto96}, \cite{Mat96}]\label{thm:pairsAreCorkTwists}
Every pair of simply connected, orientable, closed 4-manifolds which are homeomorphic but not diffeomorphic are related by a cork twist, i.e., are of the form $X$ and $X_A$ for some cork $A \subset X$. Moreover, the cork $A$ may be chosen so that $X$ and $X -A$ are 2-handlebodies.
\end{theorem}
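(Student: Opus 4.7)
Since this theorem is attributed to the cited references of Curtis--Freedman--Hsiang--Stong and Matveyev, my plan is essentially to quote the result; for context I sketch the strategy of their proofs.

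The starting point is Freedman's work, which guarantees that any two homeomorphic, closed, simply connected smooth 4-manifolds admit a smooth $h$-cobordism $W$ between them. Standard handle-trading in the simply connected setting lets one assume $W$ is built from $X \times I$ by attaching only 2-handles and 3-handles. The attaching circles of the 2-handles and the belt circles of the 3-handles can be flowed into a common middle level $M$, and the $h$-cobordism condition forces their algebraic intersection pairing on $M$ to be the identity matrix after a basis change. If one could arrange the geometric intersection to match this algebraic intersection, the smooth Whitney trick would give $W \cong X \times I$ and hence $X \cong X'$ smoothly; the failure of the smooth 4-dimensional Whitney trick is precisely what prevents this.

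The heart of the argument, and what I expect to be the main obstacle, is to localize this failure inside a contractible compact codimension-zero submanifold $A \subset W$. One takes $A$ to be a regular neighborhood of the union of the relevant ascending and descending disks in the middle level together with the topological (but not smooth) Whitney disks guaranteed by Freedman; using the intersection data one verifies via Mayer--Vietoris that $A$ is contractible. Outside of $A$, the Whitney trick works smoothly, so $W - A$ is diffeomorphic to a product cobordism. Exchanging the two boundary copies of $\partial A$ via this product structure then induces an involution $\tau$ on $\partial A$ that does not extend across $A$, and by construction $X_A = (X - A) \cup_\tau A$ is diffeomorphic to $X'$. The final claim that $X$ and $X - A$ are 2-handlebodies follows by choosing the handle decomposition of $W$ so that exactly the handles lying in $A$ are the ones obstructing the product structure, and observing that the complementary pieces inherit handle decompositions without 3- or 4-handles on each side.
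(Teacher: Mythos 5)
The paper offers no proof of this theorem---it is imported directly from the cited work of Curtis--Freedman--Hsiang--Stong and Matveyev---and your proposal likewise treats it as a quoted result, so you are in agreement with the paper's treatment. Your contextual sketch of the external argument is accurate in outline, modulo the small slip that the contractible codimension-zero piece localizing the failure of the Whitney trick is a sub-$h$-cobordism of the five-dimensional $W$, while the cork $A$ itself is its four-dimensional intersection with $X$.
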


We now investigate the effect of a cork twist on a 4-section diagram. 

\corkCurves

\begin{proof}
Suppose that $X$ and $X'$ are homeomorphic but not diffeomorphic. By Theorem \ref{thm:pairsAreCorkTwists}, there exists a cork, $A$, such that $A$ and $X \backslash A$ are both 2-handlebodies, and $X_A$ is diffeomorphic to $X'$. By Corollary \ref{thm:2HandlebodyHasBisection}, both $X$ and $X - A$ admit bisections. After stabilization, these bisections can be glued to form a 4-section of $X$. Let $X=X_1 \cup X_2 \cup X_3 \cup X_4$ be such a quadrisection with $X_{1,2} = A$ and $X_{3,4} = X -A$.

By Lemma \ref{prop:reglueAfterStab}, after sufficiently many stabilizations, the cork twist $\tau$ can made compatible with the 4-section, in sense that $\tau$ fixes $\Sigma$ and extends across $H_{1,2}$ and $H_{2,3}$. By the equivariant disk theorem \cite{MeeYau81}, there exist cut systems, $C_1$ and $C_3$, for $H_{4,1}$, and $H_{2,3}$, respectively so that $\tau(C_1) = C_1$ and  $\tau(C_3) = C_3$. The effect of $\tau$ on $C_2$ follows from proposition \ref{prop:CurvesAfterGluing}.
\end{proof}

We remark that in \cite{AkbMat98}, the authors show that the cork and its complement in Theorem \ref{thm:pairsAreCorkTwists} can be chosen to be compact Stein manifolds. Using Corollary \ref{cor:steinMfldsHaveBisections}, the previous proof admits a straightforward modification where the bisections involved are Stein manifolds. 

\subsection{The cork twist on a Mazur manifold}\label{sec:mazurex}

In this section, we produce an explicit example of a bisection of a cork, together with the effect of the cork twist. Consider the 4-manifold, $W$, given by the Kirby diagram in Figure \ref{fig:mazManWithHeegSplit}. This manifold was first described by Mazur in \cite{Maz61}, who showed that $W$ is contractible. A rotation about the axis of symmetry of the link in the Kirby diagram induces an involution, $\tau$, on the boundary of this manifold. Akbulut showed in \cite{Akb91} that this involution does not extend to the whole 4-manifold, and so $(W, \tau)$ is indeed a cork.

The manifold $W$ is obtained by attaching a $2$-handle to $\partial(S^1 \times B^3) = S^1 \times S^2$. In addition to the handle decomposition, in Figure $\ref{fig:mazManWithHeegSplit}$ we also include a Heegaard surface, $\Sigma$, splitting this copy of $S^1\times S^2$ into two handlebodies. We will label the ``interior'' handlebody $H_\alpha$, and the ``exterior'' handlebody $H_\beta$, so that $S^1\times S^2=H_\alpha\cup_\Sigma H_\beta$. On $\Sigma$ we also see the curve which bounds a disk in both $H_\alpha$ and $H_\beta$ (in purple). The 2-handle projects onto $\Sigma$ as $\gamma$ (in green), and this projection has the additional property that its surface framing is equal to $0$. Since $\gamma$ is dual to $H_\beta$, pushing $\gamma$ into $H_\beta$ and doing surface-framed surgery is a handlebody, $H_\gamma$. Sliding the curves in a cut system for $H_\beta$ off of $\gamma$ via this dual curve produces the cut system $C_\gamma$ in Figure \ref{fig:mazManCorkTwist}. 

The duality condition between the $H_\beta$ and $\gamma$ also implies that $\Sigma$ is a Heegaard surface for $\partial W$. This surface is fixed setwise with respect to $\tau$ and, in particular, $\tau$ induces an involution on $\Sigma$. By Proposition \ref{prop:CurvesAfterGluing}, the effect of doing a cork twist on this bisection is replacing $C_\beta$ with $C_\beta'$, which is also illustrated in Figure \ref{fig:mazManCorkTwist}.

\begin{figure}
    \centering
    \includegraphics[width=0.9\textwidth]{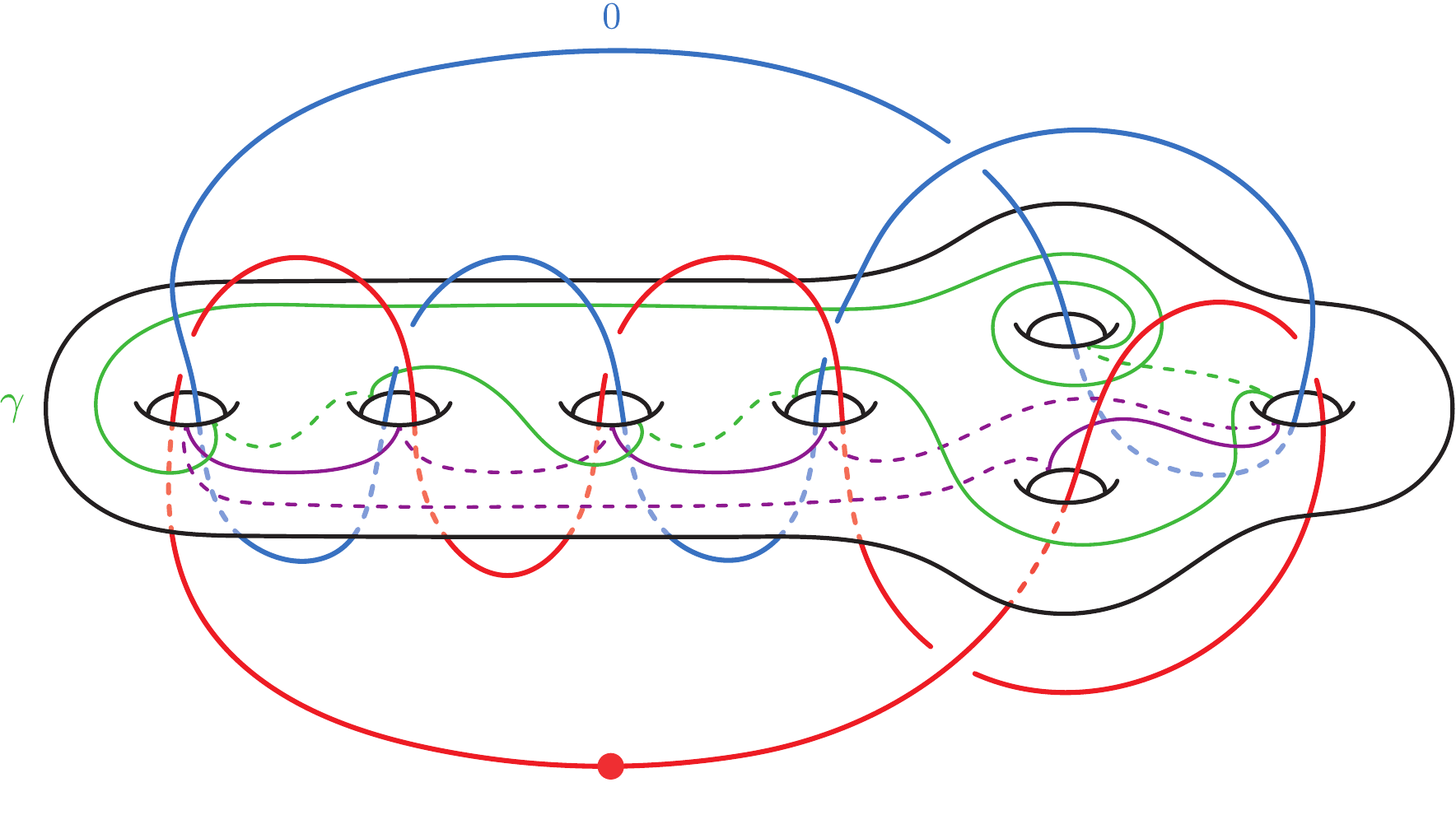}
    \caption{An equivariant Heegaard splitting for the boundary of the Mazur manifold.}
    \label{fig:mazManWithHeegSplit}
\end{figure}
 
In fact, Akbulut showed that $W$ may be embedded in $E(2) \# \cpbar$, and that cutting out and regluing $W$ by $\tau$ changes the smooth structure on $E(2) \# \cpbar$. This was later generalized by Bi\v{z}aca and Gompf \cite{BizGom96}, who demonstrated an embedding of $W$ in $E(n) \#^2 \cpbar$, so that cutting and regluing $W$ by $\tau$ also changes the smooth structure. In their decomposition of $E(n) \#^2 \cpbar$, \cite[page 477]{BizGom96} the complement of $W$ is built without handles of index $3$ or $4$. Therefore, by Theorem \ref{thm:2HandlebodyHasBisection} both $W$ and $(E(n)\#^2\cpbar)-W$ admit bisections which can (after some stabilizations) be glued together. The involution in Figure \ref{fig:mazManCorkTwist} naturally extends to the stabilized bisections by performing the stabilization in an equivariant way. Thus, we obtain the following proposition.

\begin{proposition}
There are infinitely many exotic pairs of 4-manifolds, $X$ and $X'$, satisfying the following properties:
\begin{enumerate}
    \item $X$ and $X'$ have 4-section diagrams $(\Sigma;D_1, D_2, D_3, D_4)$ and $(\Sigma;D_1, D_2, D_3, D_4')$, respectively;
    \item $D_2$, $D_3$, and $D_4$ are some stabilization of the cut systems $C_2$, $C_3$, and $C_4$, in Figure \ref{fig:mazManCorkTwist}, respectively;
    \item $D_4'$ is some stabilization of the cut system $C_4'$ in Figure \ref{fig:mazManCorkTwist}.
\end{enumerate}
\end{proposition}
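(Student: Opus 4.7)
The plan is to realize the claimed pairs as the 4-manifolds $E(n)\#^2\cpbar$ and their Mazur cork twists for varying $n \geq 2$, using the bisection of $W$ from Section \ref{sec:mazurex} extended to a full 4-section. Bi\v{z}aca and Gompf embed $W$ in $E(n)\#^2\cpbar$ so that the cork twist changes the smooth structure, and so that the complement $(E(n)\#^2\cpbar)\setminus W$ admits a handle decomposition without 3- or 4-handles. Theorem \ref{thm:2HandlebodyHasBisection} then equips this complement with a bisection; combined with the bisection of $W$ pictured in Figure \ref{fig:mazManCorkTwist}, we have two bisected pieces that together decompose $E(n)\#^2\cpbar$.

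To assemble these into a single 4-section, I would glue the two bisections along $\partial W$. The Heegaard splittings of $\partial W$ induced by the two sides need not agree a priori, so Lemma \ref{lem:MapFixHeegaard} together with the diagrammatic stabilizations of Definition \ref{def:Stabilization} would be applied to each side until the splittings become isotopic; Proposition \ref{prop:reglueAfterStab} then guarantees that the result is an honest 4-section of $E(n)\#^2\cpbar$. Arranging labels so that $W$ coincides with one of the two subsections, one obtains a diagram $(\Sigma; D_1, D_2, D_3, D_4)$ whose cut systems are stabilizations of $C_1, C_2, C_3, C_4$ from Figure \ref{fig:mazManCorkTwist}.

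The main obstacle is to carry out all of these stabilizations equivariantly with respect to the cork involution $\tau$ on $\partial W$, so that $\tau$ continues to fix the central surface setwise and to extend across the relevant boundary handlebodies. Concretely, the stabilizing arcs of Figure \ref{fig:bisectionStabilization} must be chosen in $\tau$-invariant pairs, and the corresponding $(i,j)$-stabilizations of the 4-section must be performed compatibly with $\tau$; this is the same equivariance input used in the proof of Theorem \ref{thm:corkCurves}. Granting this, Proposition \ref{prop:CurvesAfterGluing} applied to the subsection equal to (the stabilization of) $W$ shows that the cork twist alters only the single cut system in the interior of $W$, replacing $D_4$ by the stabilization $D_4'$ of $C_4'$, while $D_1, D_2, D_3$ are preserved. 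Since Bi\v{z}aca and Gompf produce infinitely many values of $n$ for which $E(n)\#^2\cpbar$ and its $W$-twist are not diffeomorphic, this procedure yields infinitely many exotic pairs with 4-section diagrams of exactly the required form.
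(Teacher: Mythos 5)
Your proposal is correct and follows essentially the same route as the paper: the Bi\v{z}aca--Gompf embedding of $W$ in $E(n)\#^2\cpbar$, bisections of $W$ and its complement via Theorem \ref{thm:2HandlebodyHasBisection}, equivariant stabilization so the two bisections glue to a 4-section compatible with $\tau$, and Proposition \ref{prop:CurvesAfterGluing} to see that the twist changes only the innermost cut system. You simply make explicit the supporting lemmas (Lemma \ref{lem:MapFixHeegaard}, Proposition \ref{prop:reglueAfterStab}) that the paper invokes implicitly.
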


\begin{figure}
    \centering
    \includegraphics[width=\textwidth]{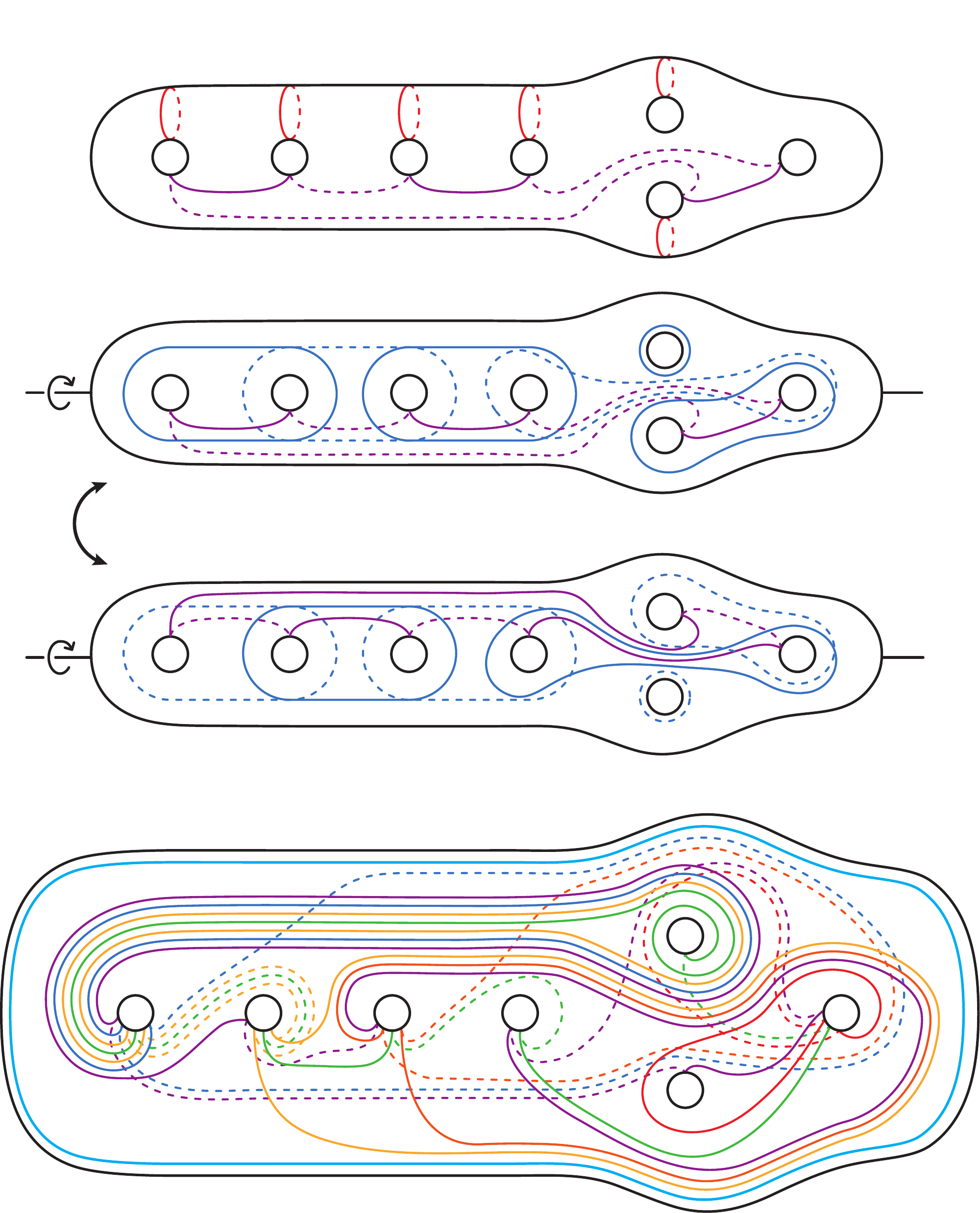}
    \put(-250,455){$C_\alpha$}
    \put(-250,342){$C_\beta$}
    \put(-250,217){$C_\beta'$}
    \put(-250,0){$C_\gamma$}
    \put(-445,328){$\tau$}
    \caption{The three cut systems for a bisection diagram of the Mazur manifold: $C_\alpha$, $C_\beta$, and $C_\gamma$. The curves in $C_\gamma$ are colored differently only for visual clarity.}
    \label{fig:mazManCorkTwist}
\end{figure}

\subsection{The failure of Waldhausen's theorem for 4-sections}\label{sec:waldhausen}

Another interesting question in trisection theory is whether the analogue of Waldhausen's theorem \cite{Wal68} holds for trisections of $S^4$, i.e., whether every trisection of $S^4$ is a stabilization of the $(0;0)$- trisection. Here, we answer the corresponding question for multisections in the negative. This question is only interesting in the case that the cross-section manifolds are identical; otherwise the multisections could not possibly be diffeomorphic. Our main tool will be the (infinitely many) exotic Mazur manifolds constructed in \cite{HayMarPic19}, i.e., pairs of compact contractible 4-manifolds built with a single 1- and 2-handle, that are homeomorphic but not diffeomorphic.

\begin{proposition}
There are infinitely many pairs of non-diffeomorphic 4-sections of $S^4$ with the same 3-manifold cross-sections.
\end{proposition}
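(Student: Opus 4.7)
The plan is to leverage the infinite family of pairs of exotic Mazur manifolds $(W, W')$ from \cite{HayMarPic19}, where each pair consists of homeomorphic but non-diffeomorphic compact contractible 4-manifolds built with one $0$-, one $1$-, and one $2$-handle. Each such pair should produce a pair of non-diffeomorphic 4-sections of $S^4$ whose cross-section 3-manifolds coincide, and iterating over the family yields the desired infinite collection.

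Given such a pair, I would first apply Theorem~\ref{thm:2HandlebodyHasBisection} to obtain bisections of $W$ and of $W'$, which is possible since each is a 2-handlebody with connected boundary. Next, using the standard Mazur cancellation argument (upon doubling, the $1$-handle of $W$ cancels with its $2$-handle, leaving a $0$-framed unknotted $2$-handle that pairs with the dual $3$-handle), each double $D(W) = W \cup_{\partial W}(-W)$ is diffeomorphic to $S^4$, and similarly for $W'$. Gluing the bisection of $W$ to the mirror bisection of $-W$ along $\partial W$ produces a 4-section $M$ of $S^4$ with sectors $X_1, X_2, X_3, X_4$ satisfying $X_{1,2} = W$ and $X_{3,4} = -W$; an analogous construction on $(W', -W')$ produces $M'$.

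To verify that $M$ and $M'$ have matching cross-section 3-manifolds, I would compute both explicitly. One cross-section is $\partial X_{1,2} = \partial W$, which is diffeomorphic to $\partial W'$ since $W$ and $W'$ are homeomorphic and homeomorphic 3-manifolds are diffeomorphic. The other cross-section is $\partial X_{2,3}$, which by the symmetry of the doubling construction is the double of the middle handlebody of the bisection, namely $\#^g S^1 \times S^2$ where $g$ is the bisection genus. Using Definition~\ref{def:Stabilization} to stabilize the bisections of $W$ and of $W'$ so that their genera agree in advance, this cross-section also matches across $M$ and $M'$.

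Finally, I would argue that $M$ and $M'$ are non-diffeomorphic as 4-section structures: any such diffeomorphism must carry each subsection to a subsection of the target, so $X_{1,2} = W$ must map to either $X_{1,2}^{M'} = W'$ or $X_{3,4}^{M'} = -W'$, each of which is diffeomorphic to $W'$ as an unoriented smooth 4-manifold, contradicting $W \not\cong W'$. The main technical obstacle is verifying that the doubling construction yields the standard smooth $S^4$ (rather than a potentially exotic homotopy $S^4$) for the specific Mazur manifolds of \cite{HayMarPic19}, together with checking that the genus-equalization stabilization can be performed without introducing additional distinctions between the two 4-sections; both reduce to explicit handle calculus on the HMP handle diagrams.
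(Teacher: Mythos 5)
Your proposal follows essentially the same route as the paper's proof: double each exotic Mazur manifold from \cite{HayMarPic19} to obtain $S^4$, glue the two bisections (after stabilization) into a 4-section, identify the cross-sections as $\partial W$ and $\#^g S^1\times S^2$, and observe that a diffeomorphism of the resulting 4-sections would restrict to a diffeomorphism of the contractible pieces. The two points you flag as remaining technical obstacles (that the double of a compact contractible 2-handlebody is the standard $S^4$, and that the stabilizations can be matched) are standard and are treated just as briefly in the paper.
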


\begin{proof}
Let $Z$ and $Z'$ be a pair of non-diffeomorphic Mazur manifolds with $\partial Z=\partial Z'=Y$. Since $Z$ and $Z'$ are 2-handlebodies, they admit bisections, and up to stabilization, the doubles $D(Z)=Z\cup Z$ and $D(Z')=Z'\cup Z'$ admit 4-sections, say of genus $g$. The cross-section manifolds are $Y$ and $\#^gS^1\times S^2$ (the double of the middle handlebody). Since both $Z$ and $Z'$ are of Mazur type, their doubles are $S^4$. On the other hand, these two 4-sections of $S^4$ cannot be diffeomorphic, since this would restrict to a diffeomorphism between $Z$ and $Z'$. Applying this construction to the pairs of exotic Mazur manifolds from \cite{HayMarPic19} completes the proof.
\end{proof}

\section{Multisections of \texorpdfstring{$E(n)_{p,q}$}{E(n)p,q}}\label{sec:EllipicFibrations}

\subsection{Elliptic fibrations and \texorpdfstring{$\mathbf{E(1)}$}{E1}} 

Recall that a complex surface $S$ is a holomorphic elliptic fibration if there is a holomorphic map $p:S \to C$ to a complex curve, $C$, such that for all $t \in C$, $p^{-1}(t)$ is an elliptic curve, i.e. topologically a torus. For a smooth, closed, oriented 4-manifold $X$, we say that $p:X \to C$ is a smooth elliptic fibration if for all $t \in C$, the fiber $p^{-1}(t)$ has a neighborhood modeled on a holomorphic elliptic fibration. More precisely, $p^{-1}(t)$ has a neighborhood, $U$, and an orientation preserving diffeomorphism, $\phi: U \to S$, to an elliptic fibration, $S$, such that $\phi$ commutes with the fibration maps. 

In this section, we will be focused on smooth elliptic fibrations whose base surface, $C$, is topologically a sphere. Of particular importance in this class is the elliptic fibration $p:\cp \#^9 \cpbar \to S^2$. We call this manifold, together with its fibration structure, $E(1)$.  Let $F,F' \subset E(1)$ be regular fibers of this fibration, and let $\nu(F)$ and $\nu(F')$ be their regular neighbourhoods. By choosing these neighbourhoods sufficiently small, we can arrange so that they contain no critical fibers. We therefore obtain a decomposition of $E(1)$ as $\nu(F) \cup_{T^3} A \cup_{T^3} \nu(F')$, where $A$ is the region of $E(1)$ projecting to the annulus $S^2 - (p(\nu(F)) \cup p(\nu(F')))$.

We parameterize $\partial \nu(F) \cong T^3$ as $S^1 \times S^1 \times S^1$, where the first two coordinates span a regular fiber, and the third coordinate corresponds to the circle which bounds a disk in $\nu(F)$. Using $F$ as a reference fiber, $E(1)$ has twelve critical fibers whose vanishing cycles, in order, alternate between the loops $S^1 \times {\text{\{pt\}}} \times {\text{\{pt\}}}$ and $ {\text{\{pt\}}} \times S^1 \times {\text{\{pt\}}}$. The effect of expanding a subset of the fibration across a critical fiber is a 2-handle addition along the vanishing cycle with framing one less than the fiber-surface framing. We can therefore describe $A$ by the self-cobordism of $T^3$ consisting of twelve 2-handle additions, pictured in Figure \ref{fig:E1HandleDecomp}.

\begin{figure}
    \centering
    \includegraphics[scale=.3]{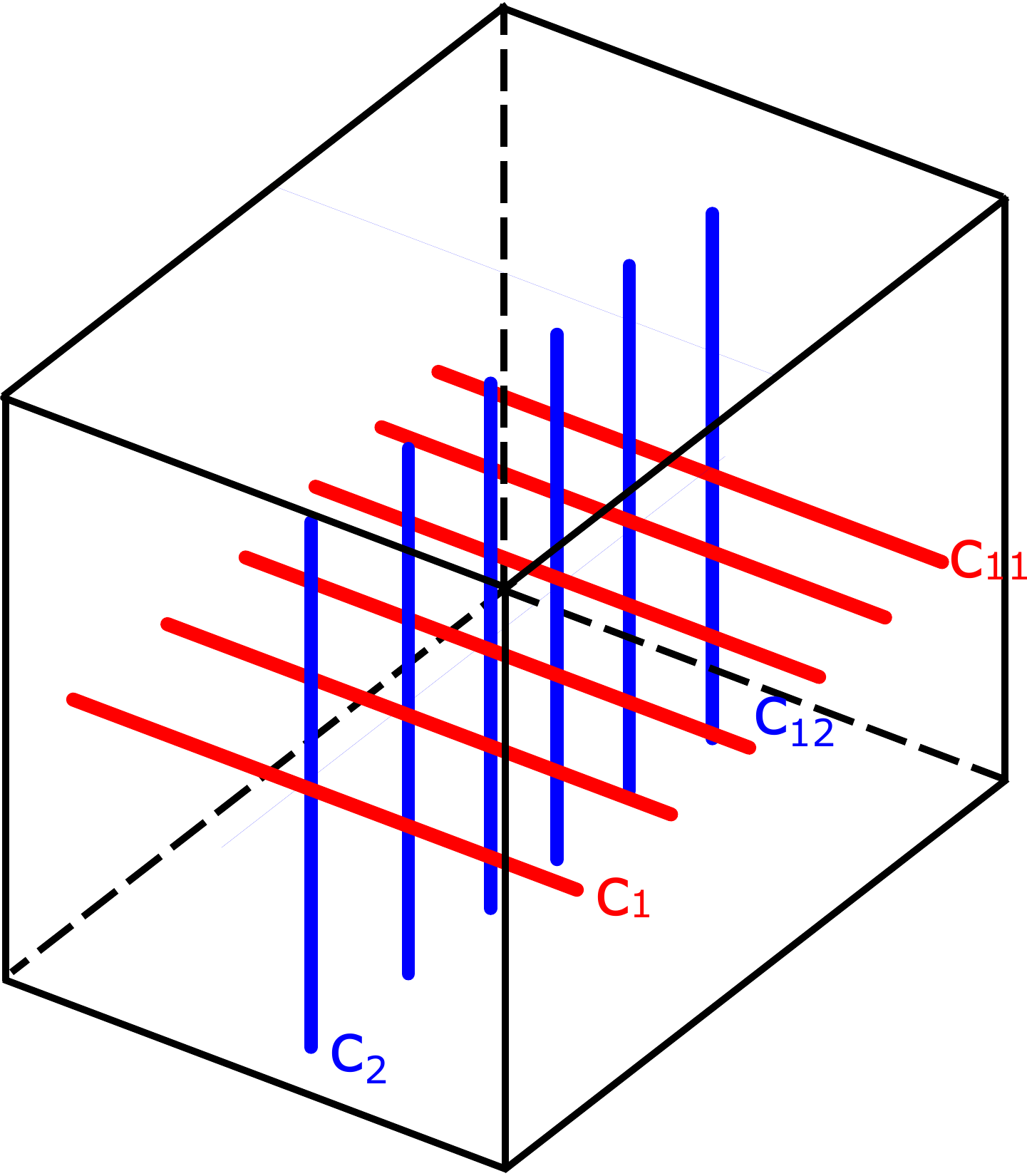}
    \caption{The self cobordism of $T^3$ corresponding to the complement of two regular fibers of $E(1)$ consists of twelve layered 2-handles. All of these 2-handles are $-1$- framed with respect to the coordinate $T^2$ spanned by a red and a blue curve.}
    \label{fig:E1HandleDecomp}
\end{figure}
 
We now construct a multisection of $E(1)$. We begin with $\nu(F)$, which is diffeomorphic to $T^2 \times D^2$ and can be obtained by attaching a $0$-framed 2-handle to $T^3 \times I$ along the loop $ {\text{\{pt\}}} \times {\text{\{pt\}}} \times S^1$. This 2-handle attachment is compatible with the Heegaard splitting of $T^3$, since the projection of this curve to the Heegaard surface, $\Sigma$, is dual to the cut systems $D_1$ and $D_3$ in Figure \ref{fig:T2xD2}. The reader may verify this fact by noting that the projection of the 2-handle attaching circle is the red curve in $D_2$. In particular, Figure \ref{fig:T2xD2} is a bisection diagram for $T^2\times D^2$; where $D_1$ and $D_3$ is the usual Heegaard diagram of $T^3$.

\begin{figure}
    \centering
    \includegraphics[width=0.85\textwidth]{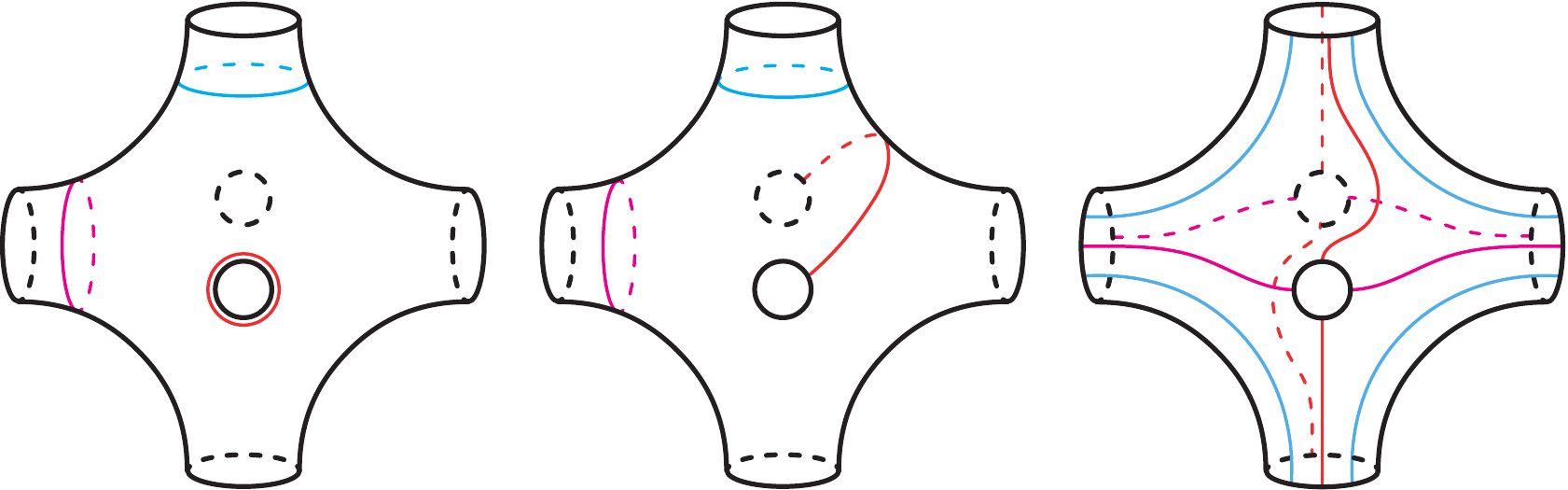}
    \put(-30,25){$D_{3}$}
    \put(-166,25){$D_{2}$}
    \put(-303,25){$D_{1}$}
    \put(-387,85){$d_1$}
    \caption{A bisection diagram of $T^2 \times D^2$. The different colors in each cut system are only for visual clarity.}
    \label{fig:T2xD2}
\end{figure}

Next, we incorporate the cobordism, $A$; to do so, we will attach each 2-handle $c_1,\dots,c_{12}$ in Figure \ref{fig:E1HandleDecomp} in order of increasing index. The projection of $c_1$ onto $\Sigma$ has an obvious dual curve $d_1$ (labelled in $D_1$). Performing a left-handed Dehn twist of $c_1$ about $d_1$ produces a curve $c_1^p \subset \Sigma$ isotopic to $c_1$ in $T^3$, but whose framing induced by $\Sigma$ is one less than the framing induced by the fiber surface $S^1 \times S^1 \times {\text{\{pt\}}}$. Therefore, replacing the curve $d_1$ in the Heegaard diagram for $T^3$ by $c_1^p$ corresponds to adding a $-1$- framed 2-handle along $c_1$, as desired. The resulting cut system is pictured in the top left of Figure \ref{fig:CobordismMultisection}, and labelled $C_1$.

We next proceed to $c_2$; we would like to push it in front of the 2-handle we have already attached. Sliding $c_2$ over the framed curve $c_1$ corresponds to a right-handed Dehn twist of $c_2$ about $c_1$ in the fiber $S^1\times S^1\times\{\text{pt}\}$. Note that such a slide preserves the surface framing. The projection of this twisted curve to the Heegaard surface is dual to the curve $d_2\subset \Sigma$ (labelled in $C_1$), and so, correcting the framing as before, we obtain a curve $c_2^p$. Replacing $d_2$ with $c_2^p$ produces the cut system $C_2$ in Figure \ref{fig:CobordismMultisection}.

We proceed similarly with the remaining 2-handles. Sliding $c_i$ over the curves $c_1,\dots,c_{i-1}$ corresponds to performing a right handed Dehn twist of $c_i$ about $c_{i-1},c_{i-2},\dots,c_2$, and $c_1$, in this order. After these slides, $c_i$ lies in front of the other curves and so can be projected to the front of $\Sigma$. With coordinates as before, the result of sliding $c_i$ in front of the previous curves produces the $(0,1,0)$- curve if $i \equiv 0\! \mod 3$, the $(1,0,0)$- curve if $i \equiv 1\! \mod 3$, and the $(1,1,0)$- curve if $i\! \equiv 2 \mod 3$. 

For $i \geq 3$, the result of projecting $c_i$ to the Heegaard surface is dual to the curve $c_{i-2}^p$. We can frame the curve appropriately by performing a left handed Dehn twist of $c_i$ about $c_{i-2}^p$, producing a curve $c_i^p$. Replacing $c_{i-2}^p$ by  $c_i^p$ gives the next handlebody in the sequence. The remainder of Figure \ref{fig:CobordismMultisection} is the result of repeatedly applying this procedure. By construction, the handlebodies determined by the cut systems $D_3$ and $C_i$ define a Heegaard splitting for the torus bundle over $S^1$ with monodromy obtained by taking the first $i$ terms of the expression $(\tau_{\alpha}\tau_{\beta})^6$. In particular, the well known relation $(\tau_{\alpha}\tau_{\beta})^6=1$ in the mapping class group of the torus implies that the final cut system, $C_{12}$, together with $D_3$, is a Heegaard diagram for $T^3$.

Combining Figures \ref{fig:T2xD2} and \ref{fig:CobordismMultisection} we now have a subsection of $E(1)-\nu(F')$ in a forthcoming multisection for $E(1)$. To extend this multisection to $\nu(F')$, we fill in the remaining $T^3$ boundary with the bisection of Figure \ref{fig:T2xD2}, by adding the cut system $D_2$. More explicitly, a complete multisection diagram of $E(1)$ is given by the ordered cut systems $(D_3,D_2, D_1, C_1, C_2, \dots C_{11}, C_{12}=D_1, D_2)$. Note that this is a thin $(3;2)$ 16-section, and so $\chi(E(1))=12$, as expected. We remark that the cut systems $C_i$ and $C_{i+1}$ can be contracted, so $E(1)$ admits a 10-section of genus $3$. 

\begin{figure}
    \vspace{5mm}
    \centering
    \includegraphics[width=0.85\textwidth]{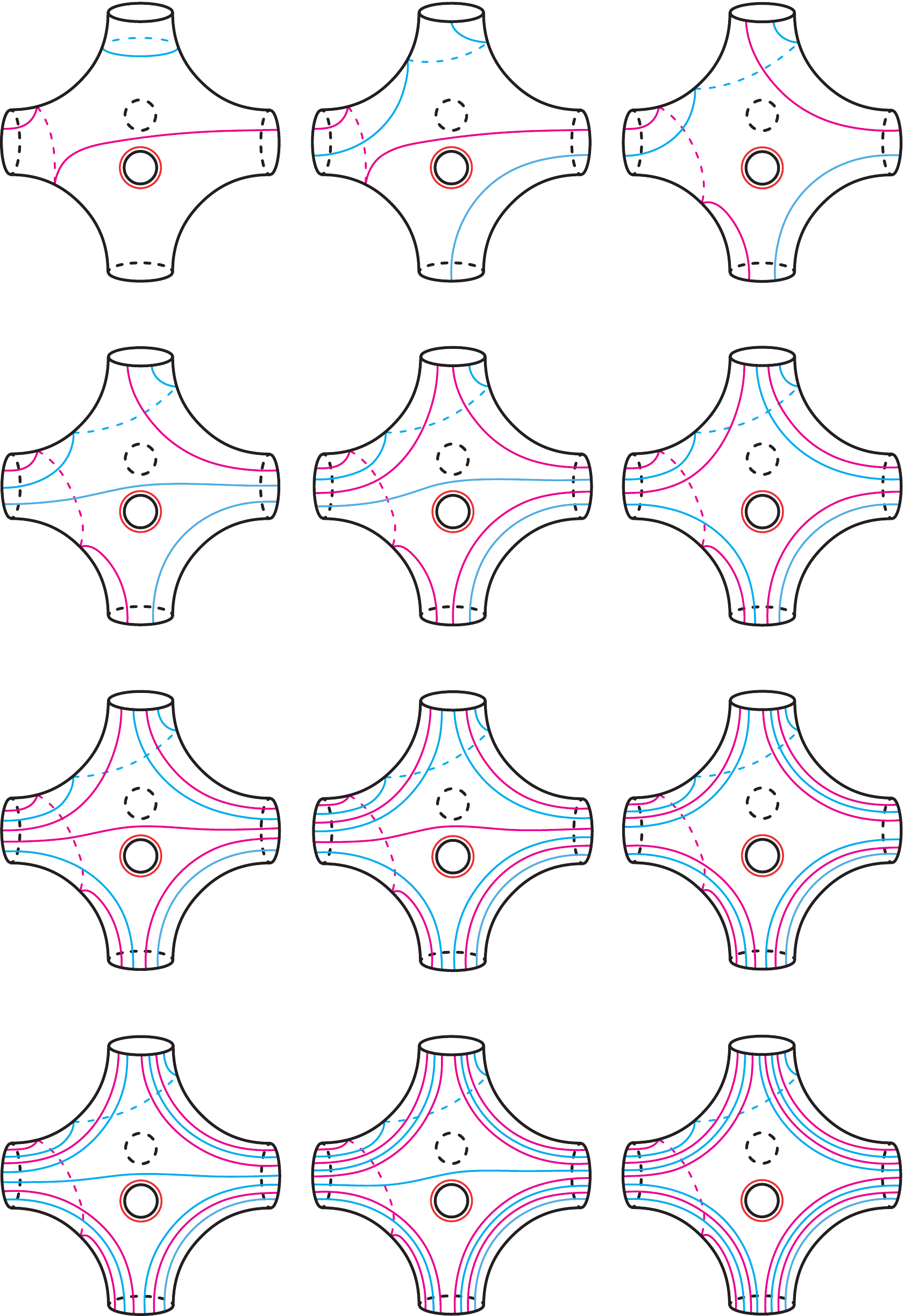}
    \put(-30,25){$C_{12}$}
    \put(-166,25){$C_{11}$}
    \put(-303,25){$C_{10}$}
    \put(-30,177){$C_9$}
    \put(-166,177){$C_8$}
    \put(-303,177){$C_7$}
    \put(-30,329){$C_6$}
    \put(-166,329){$C_5$}
    \put(-303,329){$C_4$}
    \put(-30,482){$C_3$}
    \put(-166,482){$C_2$}
    \put(-303,482){$C_1$}
    \put(-387,542){$d_2$}
    \vspace{3mm}
    \caption{The twelve cut systems describing the cobordism $A\subset E(1)$ from $T^3$ to $T^3$. The curves are colored differently only for visual clarity.}
    \label{fig:CobordismMultisection}
\end{figure}

\subsection{Fiber Sums and Log Transforms}

Elliptic fibrations can be glued together along fibers in order to obtain new fibrations. More explicitly, let $p_i:X_i \to S^2$ $i\in \{1,2\}$ be smooth elliptic fibrations over $S^2$ and $F_i \subset X_i$ be regular fibers. We may form the \emph{fiber sum}, denoted $X_1 \#_f X_2$, by removing neighbourhoods of $F_i$, and gluing the resulting manifolds together by an orientation reversing map of $T^3$. By a theorem of Moishezen \cite{Moi77}, the resulting smooth manifold does not depend on the choice of gluing map. Note that $X_1 \#_f X_2$ inherits the structure of an elliptic fibration over the sphere. We define $E(n)$ to be the elliptic fibration obtained by taking the $n$-fold fiber sum of $E(1)$ with itself, i.e., $E(n) = E(n-1) \#_f E(1)$. 

We may obtain a multisection diagram for $E(2)$ by removing bisections of $T^2 \times D^2$ from two copies of $E(1)$ and gluing the resulting multisections together. To account for the change in orientation, we reflect $T^3$ about an arbitrary coordinate $T^2$. Note that such a reflection is in the Goeritz group of the genus 3 Heegaard splitting of $T^3$, and so gluing along this map is compatible with the multisection structure. If $C_i$ is a cut system on the Heegaard surface $\Sigma$ for $T^3$, we let $\overline{C}_i$ be the result of reflecting this cut system along the chosen $T^2$. Then a multisection diagram for $E(2)$ is given by the ordered set of cut systems $$(D_3, D_2, D_1, C_1, C_2, \dots, C_{11}, C_{12} = D_1 = \overline{D}_1,  \overline{C}_1, \overline{C}_2, \dots, \overline{C}_{12}, D_2).$$ The manifolds $E(n)$ can be constructed similarly, by alternating between the ordinary and reflected cut systems.

Next, we describe a surgery operation on elliptic fibrations which preserves their structure. Let $p:X \to C$ be a smooth elliptic fibration, and $F \subset X$ be a regular fiber. A neighbourhood of $F$ is diffeomorphic to $T^2 \times D^2$, and so the fibration structure on $X$ induces a map $\boundary \nu(F)\cong T^2\times S^1 \to S^1$. Let $\phi_p: T^2 \times S^1 \to T^2 \times S^1 $ be a diffeomorphism such that the restriction of the map to the $S^1$ factor is a degree $p$ map. Then, the manifold $X- (\nu(F)) \cup_{\phi_p} T^2 \times D^2$ is called a \emph{log transform of multiplicity $p$}. By a theorem of Gompf \cite{Gom91}, the diffeomorphism type of the surgered manifold in this setting depends only on $p$. We denote the manifold obtained by doing $n$ log transforms of multiplicity $p_1,\dots, p_n$ on $E(n)$ by $E(n)_{p_1, \dots, p_n}$. 

If $p$ and $q$ are relatively prime, then $E(n)_{p,q}$ is simply connected. Moreover, it is straightforward to show that the intersection form is not changed by this surgery, and so it follows from Freedman's theorem \cite{Fre82} that these manifolds are homeomorphic. On the other hand, these manifolds often fail to be diffeomorphic. By identifying log transforms with rational blowdowns, Fintushel and Stern \cite{FinSte97} were able to calculate the Seiberg-Witten invariants of $E(n)_{p,q}$, for $n \geq 2$. Combining this with a result of Friedman \cite{Fri95} on the manifolds $E(1)_{p,q}$ gives the following theorem.

\begin{theorem}[\cite{FinSte97},\cite{Fri95}]\label{thm:EnpqDiffeoType}
Let $n \geq 1$, and suppose $p,q,p',q'>1$. Then, $E(n)_{p,q}$ is diffeomorphic to $E(n)_{p',q'}$ if and only if $\{p,q\} = \{p',q'\}$ as unordered pairs.
\end{theorem}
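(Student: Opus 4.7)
The ``if'' direction is immediate from Gompf's theorem cited just above: the diffeomorphism type of a log transform depends only on its multiplicity, and log transforms performed on disjoint regular fibers of $E(n)$ can be performed in either order, so $E(n)_{p,q}$ depends only on the unordered pair $\{p,q\}$. The plan for ``only if'' is to extract $\{p,q\}$ from a smooth invariant of the underlying 4-manifold, and the choice of invariant depends on whether $n \geq 2$ or $n=1$.

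For $n \geq 2$, I would follow Fintushel and Stern and use Seiberg--Witten theory. The key geometric input is their identification of a multiplicity-$p$ logarithmic transform with a rational blowdown: after blowing up $E(n)$ appropriately near a cusp fiber, one exposes a linear plumbing $C_p$ of $2$-spheres of total self-intersection $-p^2$ whose boundary lens space bounds a rational ball, and replacing that plumbing by its rational ball realizes the log transform. The Fintushel--Stern blowdown formula then gives a precise rule for how the Seiberg--Witten polynomial transforms under this surgery. Starting from the known Seiberg--Witten polynomial of $E(n)$ (a power of $t - t^{-1}$ with $t = \exp(F)$, where $F$ is the fiber class) and iterating for both log transforms, one obtains a closed-form Laurent polynomial $\mathcal{SW}_{E(n)_{p,q}}$ in the variable corresponding to the new primitive multiple-fiber class, whose Newton polygon recovers the multiset $\{p,q\}$ unambiguously.

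For $n = 1$ the Seiberg--Witten invariants are insufficient, and I would invoke Friedman's classification of Dolgachev surfaces. The manifold $E(1)_{p,q}$ is a minimal properly elliptic surface, and the multiplicities of its two multiple fibers are visible in its canonical class $K = -F + (p-1)F_p + (q-1)F_q$; Friedman shows that this information is detected by Donaldson polynomial invariants and hence is a diffeomorphism invariant, so $\{p,q\}$ is again determined. The main obstacle is the bookkeeping in the Seiberg--Witten calculation for $n \geq 2$: while the rational blowdown formula is explicit, iterating it for two distinct multiplicities requires careful tracking of characteristic classes on intermediate blow-ups. The $n = 1$ case is harder still, since no gauge-theoretic shortcut is available and one must go through the full holomorphic classification of properly elliptic surfaces and the comparison of their Donaldson series.
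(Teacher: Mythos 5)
Your proposal matches the paper exactly: the paper does not prove this theorem but quotes it, attributing the $n\geq 2$ case to Fintushel--Stern's computation of Seiberg--Witten invariants via the identification of log transforms with rational blowdowns, and the $n=1$ case to Friedman's work on Dolgachev surfaces. Your sketch is a faithful outline of precisely those two cited arguments, so there is nothing to add.
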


In order to understand the effect of a multiplicity $p$ log transform on the multisection of $E(n)$, we must understand the effect of a map $\phi_p: T^3 \to T^3$ on the genus 3 Heegaard splitting of $T^3$. Boileau and Otal \cite{BoiOta90} show that there is only one genus 3 Heegaard splitting of $T^3$ up to isotopy, and so any choice of $\phi_p$ will fix this Heegaard splitting, up to isotopy. Thus, cutting and regluing $E(n)$ along the two subsections diffeomorphic to $T^2 \times D^2$ produces multisections of $E(n)_{p,q}$. 

In order to draw diagrams of these log transforms, we will need a concrete representative of $\phi_p$. A mapping class of $T^3$ is determined entirely by its action on $H_1(T^3) = \mathbb{Z} \oplus \mathbb{Z} \oplus \mathbb{Z}$, so $\text{Mod}(T^3)$ is isomorphic to $SL_3(\mathbb{Z})$. We will continue to use our parameterization of $T^3\cong S^1\times S^1\times S^1$, where $\{\text{pt}\}\times \{\text{pt}\}\times S^1$ bounds a disk in $T^2\times D^2$. For convenience, denote the loop corresponding to the $i$-th coordinate by $x_i$, so that $H_1(T^3)$ is generated by $\{[x_1],[x_2],[x_3]\}$. Recall that $SL_3(\mathbb{Z})$ is generated by the matrices $A_{i,j}$, where $A_{i,j}$ is the elementary matrix which only differs from the identity matrix by a $1$ in the $(i,j)$ position. For $i,j,k$ distinct, we may realize the mapping class corresponding to $A_{i,j}$ by twisting the $(x_i, x_k)$ coordinate torus in the $x_i$ direction. This map is illustrated in Figure \ref{fig:torusTwist}, and its effect on the Heegaard surface is shown in Figure \ref{fig:torusTwistonHeegaardSurface}.

\begin{figure}[ht]
    \centering
    \includegraphics[scale=.3]{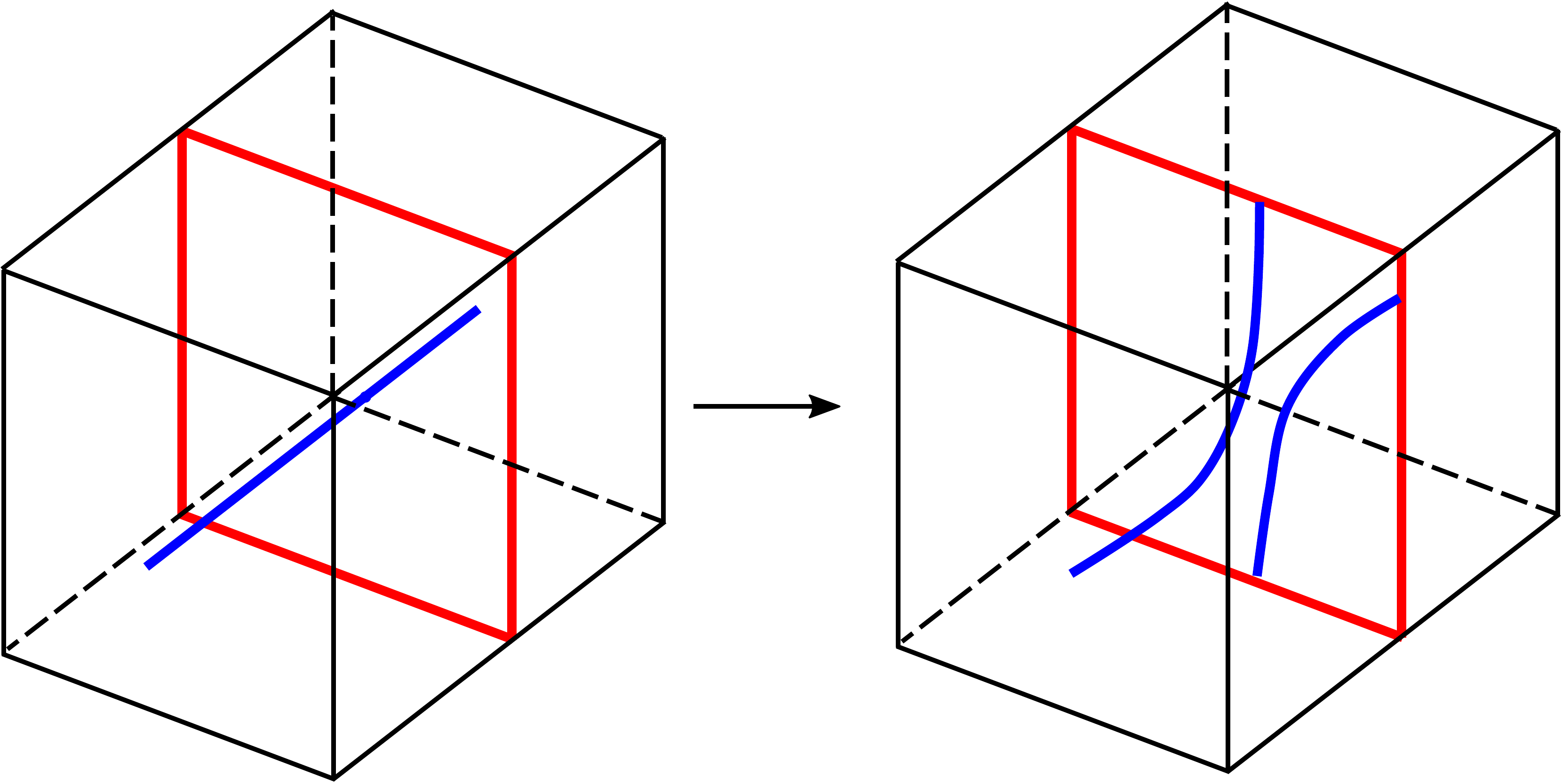}
    \caption{The effect on $x_i$ of a torus twist about the $x_j,x_k$ coordinate $T^2$ in $T^3$.}
    \label{fig:torusTwist}
\end{figure}

\begin{figure}[ht]
    \centering
    \includegraphics[scale=.3]{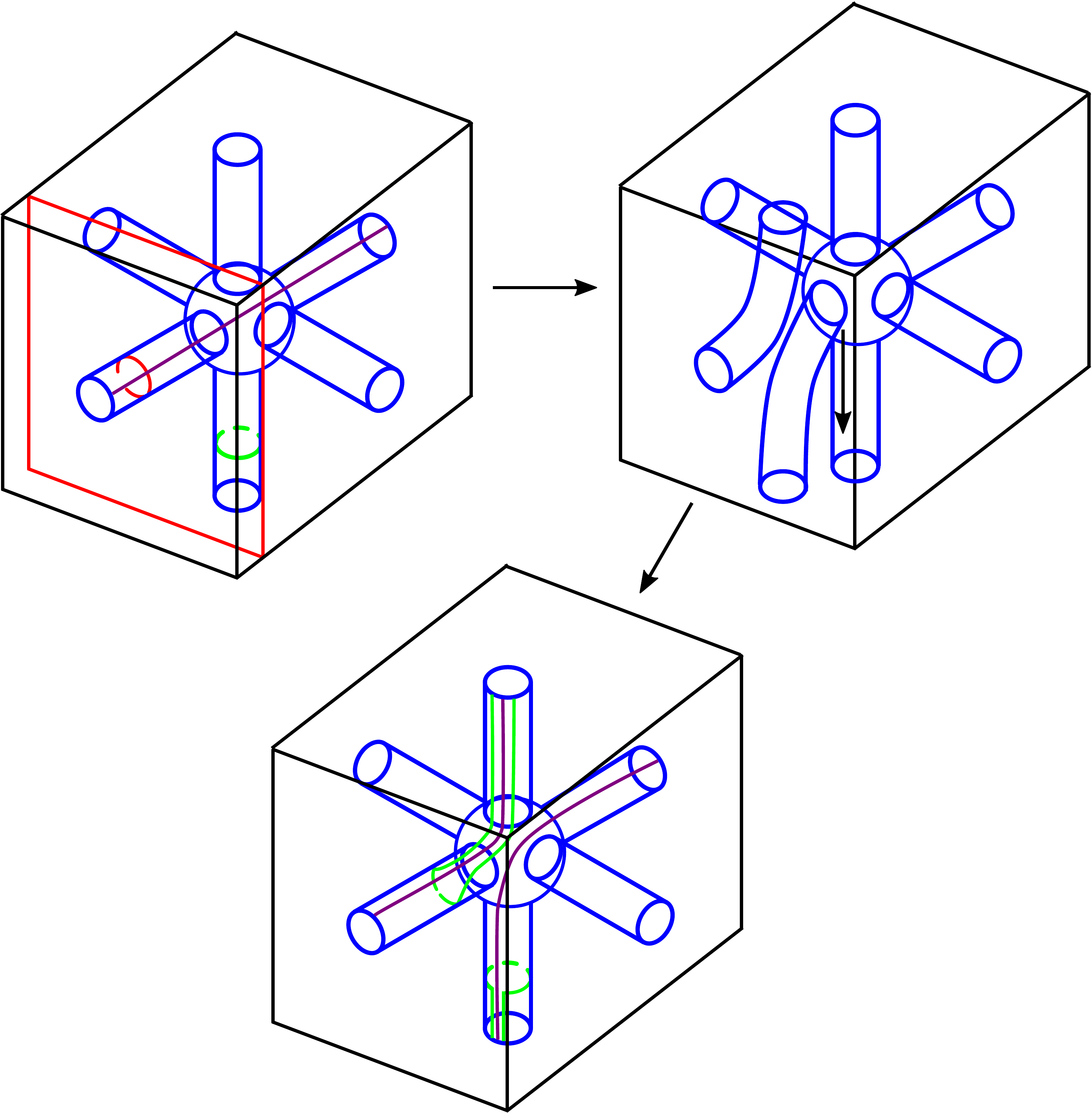}
    \caption{Top left: the genus 3 Heegaard splitting meets each coordinate torus in a circle. We will perform a torus twist in the upwards direction. Top Right: performing a torus twist about a coordinate torus moves the surface, but dragging the foot of the tube in the direction of the black arrow returns the surface to its original position. Bottom: the dragging process induces a point pushing map on the Heegaard surface, which  transforms the original green and purple curves as illustrated.}
    \label{fig:torusTwistonHeegaardSurface}
\end{figure}

\begin{figure}
    \centering
    \includegraphics[width=0.85\textwidth]{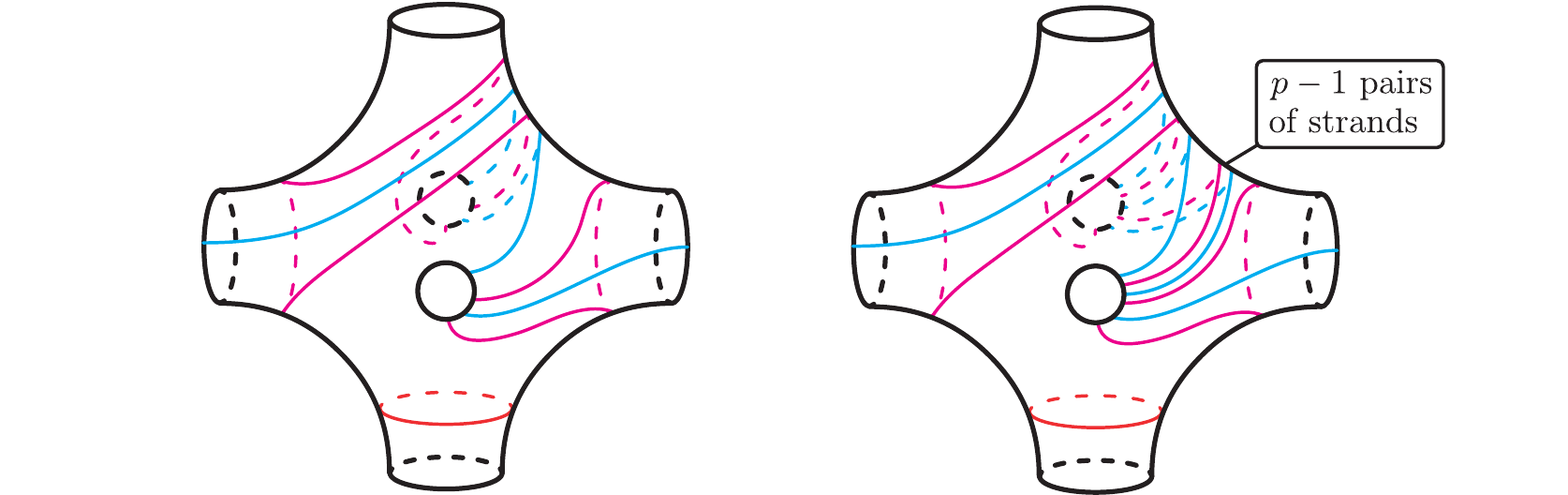}
    \put(-87,25){$D_2^p$}
    \put(-255,25){$D_2^2$}
    \caption{A log transform of multiplicity two or $p$ (pictured: $p=3$) transforms $C_\beta$ in Figure \ref{fig:T2xD2} to the cut system shown above.}
    \label{fig:logTransform}
\end{figure}

With respect to our chosen basis, we may take $\phi_p$ to be any matrix with a $p$ in the bottom right corner, and so we will choose the particular matrix
$$\phi_p=\begin{bmatrix} 1 & 0 & 0\\ 0 & 1 & 1 \\ 0 & p-1 & p \end{bmatrix}=A_{3,2}^{p-1}A_{2,3}=\begin{bmatrix} 1 & 0 & 0\\ 0 & 1 & 0 \\ 0 & 1 & 1 \end{bmatrix}^{p-1}\begin{bmatrix} 1 & 0 & 0\\ 0 & 1 & 1 \\ 0 & 0 & 1 \end{bmatrix}.$$

\noindent Therefore, this corresponds to a twist along the $(x_1, x_2)$ coordinate torus in the $x_1$ direction, followed by $p-1$ twists along the $(x_1, x_3)$ coordinate torus in the $x_3$ direction. 

The result of composing these maps fixes the handlebodies described by $D_1$ and $D_3$ in Figure \ref{fig:T2xD2}, but does not fix the handlebody described by $D_2$. The effect of $\phi_2$ on $D_2$ is shown on the left of Figure \ref{fig:logTransform}. More generally, the effect of $\phi_p$ on $D_2$ is described on the right of Figure \ref{fig:logTransform}, and we denote this cut system by $D_2^p$. Since $(D_1,D_2,D_3)$ appears twice in the multisection diagrams for $E(n)$, replacing $D_2$ in each triple with a $D_2^p$ and $D_2^q$ produces multisection diagrams for $E(n)_{p,q}$. Following Theorem \ref{thm:EnpqDiffeoType} we obtain the following result.

\infManyExoticGenusThree

\section{Multisections and stable maps}\label{sec:StableMaps}

\subsection{Stable maps of 4-manifolds to the disk}

In this section we discuss connections to stable maps of 4-manifolds to the disk. The reader is referred to \cite{BaySae17} for an overview of the topic well suited to our setting and to \cite{GolGui73} for the theoretical underpinnings of the subject. The smooth maps $X^4 \to D^2$ whose singular set consists of folds and cusps are stable in the category of smooth maps. In the process of proving Theorem 4 of \cite{GayKir16}, the authors show that the manifold $\natural^k S^1 \times B^3$ admits a map to a wedge whose singular image is shown on the left of Figure \ref{fig:wedgeMap}. There is a definite fold on the outside of the wedge, followed by $k$ indefinite folds (without cusps) and $g-k$ indefinite folds with a cusp. The fiber genus increases in the direction of the arrow, i.e., the folds are of index 1 when moving towards corner of the wedge. Note that such a map induces a genus $g$ Heegaard splitting of $\#^k S^1 \times S^2$.

Now, suppose that $X$ is a multisected 4-manifold with sectors $X_1,\dots,X_n$, and of genus $g$. Each sector induces such a map of $\natural^{k_i}S^1\times B^3$, and so induces two Morse functions on the intersections $H_{i,i+1}$. Any two Morse functions are homotopic, and so these maps may be connected by Cerf boxes, i.e., regions where critical levels may change to facilitate handle sliding, but no births or deaths occur. A typical example of such a map is given in the right side of Figure \ref{fig:wedgeMap}. Given a Morse 2-function, we can also explicitly extract a multisection (compare with \cite{GayKir16}).

\begin{figure}
    \centering
    \includegraphics[scale=.3]{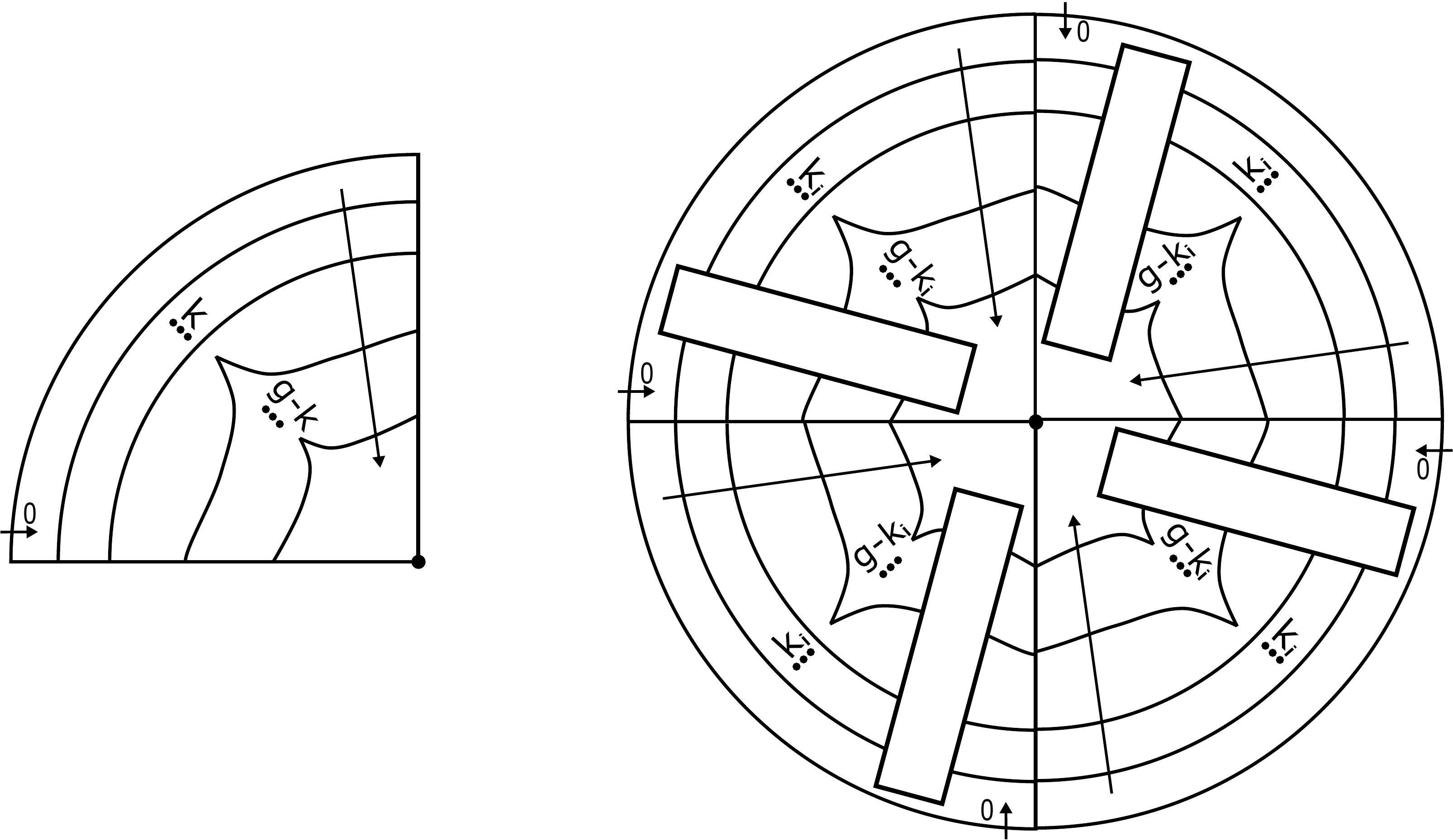}
    \caption{Left: the singular image of a map from $\natural^k S^1 \times B^3$ to a wedge. Right: a 4-section gives rise to four of these maps pasted together via Cerf boxes.}
    \label{fig:wedgeMap}
\end{figure}

\begin{definition}
A Morse 2-function $f:X^4 \to D^2$ is said to be \emph{radially monotonic} if
\begin{enumerate}
    \item $f$ has a unique definite fold;
    \item Indefinite folds always have index 1 when moving towards the center of the disk;
    \item There exist $n$ radial lines separating the disk into $n$ sectors, such that each fold has at most one cusp in each sector.
\end{enumerate}
\end{definition}

By the conditions above, the sectors of a radially monotonic function $f:X^4 \to D^2$ are diffeomorphic to $\natural^k S^1 \times B^3$. Two sectors sharing a radial line meet in a handlebody, and two sectors which do not share a radial line meet in the surface lying in the inverse image of the central point of the disk. Therefore, the inverse images of the sectors of a radially monotonic Morse 2-function are a multisection of $X$. The advantage of this singularity theoretic perspective is that we may modify the critical image, or its decomposition into sectors, in order to obtain different multisections of a fixed 4-manifold. In particular, we can perform expansion operations (see Definition \ref{def:expansion}) to produce a thin multisection.

\begin{proposition}\label{prop:thinMultisections}
Let $X$ be a multisected 4-manifold. Then, there is a sequence of expansions of this multisection producing a thin multisection.
\end{proposition}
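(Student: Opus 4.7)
The plan is to iteratively apply the expansion operation of Definition \ref{def:expansion} to each sector with $k_i < g-1$, splitting off one 2-handle at a time. The construction is driven by Proposition \ref{prop:handledecomposition} applied relatively to a single sector: it realizes $X_i$ as $H_{i-1,i} \times I$ with $g - k_i$ successive 2-handles attached along curves in $\Sigma$, where the first attaching curve is dual to a disk in $H_{i-1,i}$.

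For one expansion step, I would let $A$ be the union of $H_{i-1,i} \times I$ with just the first 2-handle. By Lemma \ref{lem:dualhandlebody}, the resulting top boundary is again a genus $g$ handlebody $H'$, so $\partial A = H_{i-1,i} \cup_\Sigma H'$, and since the 2-handle cancels one of the $g$ 1-handles, $A \cong \natural^{g-1} S^1 \times B^3$ is thin. The complement $B$ consists of $H' \times I$ with the remaining $g - k_i - 1$ 2-handles; a short Euler characteristic calculation (or a direct count of cancellations) shows $B \cong \natural^{k_i + 1} S^1 \times B^3$, with $\partial B = H' \cup_\Sigma H_{i,i+1}$. Together, $X_i = A \cup_{H'} B$ is a bisection inducing the original Heegaard splitting on $\partial X_i$, so Definition \ref{def:expansion} applies and produces an $(n+1)$-sector multisection in which the thin piece $A$ has been spun off and $B$ inherits a $k$-value that is exactly one larger than $k_i$.

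The iteration then terminates in a predictable number of steps. Starting from a single sector of thickness $g - k_i$, each expansion replaces the non-thin remainder by one thin piece plus a shallower remainder, so after exactly $g - k_i - 1$ expansions, the original $X_i$ has been replaced by $g - k_i$ thin sectors (the final remainder itself has $k$-value $g-1$). Performing this procedure on each non-thin sector in turn converts the multisection into a thin one after finitely many expansions. A sector with the degenerate value $k_i = g$ cannot be expanded directly into thin pieces for parity reasons, but such a sector is merely a collar on $H_{i-1,i}$ and so may first be removed via Lemma \ref{lem:removeRedundantSector}, reducing to the case $k_i \leq g - 1$ throughout.

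The main subtlety is verifying that the bisection of $X_i$ induces precisely the prescribed Heegaard splitting $H_{i-1,i} \cup_\Sigma H_{i,i+1}$ on $\partial X_i$, rather than some stabilization of it. This is automatic from the construction above, since the intermediate handlebody $H'$ lies in the interior of $X_i$ and the pieces $A$ and $B$ meet $\partial X_i$ only in $H_{i-1,i}$ and $H_{i,i+1}$ respectively. As a sanity check, the Morse 2-function perspective developed earlier in this section offers an alternative viewpoint: in a radially monotonic function with at most one cusp per sector, each sector meets exactly one cusp and hence the corresponding 4-dimensional piece is $\natural^{g-1} S^1 \times B^3$; inserting new radial lines between consecutive cusps within a given sector precisely implements the expansions constructed above.
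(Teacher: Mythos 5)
Your proof is correct, but it takes a genuinely different route from the paper's. The paper proves this proposition entirely in the stable-map language of Section \ref{sec:StableMaps}: it takes a radially monotonic Morse 2-function inducing the multisection and inserts additional radial lines so that each sector of $D^2$ contains exactly one cusp; a one-cusp sector pulls back to $\natural^{g-1}S^1\times B^3$, so the result is thin by definition. You instead work directly with the handle structure of a single sector from Proposition \ref{prop:handledecomposition}: realize $X_i$ as $H_{i-1,i}\times I$ with $g-k_i$ two-handles attached along successively dual curves, peel off one handle at a time via Lemma \ref{lem:dualhandlebody}, and check that each peeling is an expansion in the sense of Definition \ref{def:expansion} with the correct $k$-values ($g-1$ for the piece split off, $k_i+1$ for the remainder). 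The two arguments are essentially dual --- inserting a radial line between consecutive cusps is exactly your one-handle peeling, as you note at the end --- but yours has the advantage of being explicit enough to produce the resulting thin multisection diagram and of tracking the intermediate handlebodies $H'$ concretely, whereas the paper's is shorter and leaves the verification that the new radial pieces are 1-handlebodies with the prescribed boundary splittings implicit in the singularity-theoretic setup. You also treat the degenerate case $k_i=g$, which the paper's proof silently ignores (a zero-cusp sector remains zero-cusp after separating cusps); note, though, that your fix invokes Lemma \ref{lem:removeRedundantSector}, which is a contraction rather than an expansion, so the statement as phrased (``a sequence of expansions'') needs that caveat under either approach.
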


\begin{proof}
Denote the sectors of this multisection by $X_1,\dots,X_n$, and let $f:X\to D^2$ be a radially monotonic Morse 2-function inducing this multisection. If any sector contains more than two cusps, draw an additional line between the cusps to separate the region into two sectors; this new multisection is related to the old one by a thinning operation. After separating each cusp in the decomposition, we are left with a collection of sectors, each of which contains a single cusp. Such a sector is diffeomorphic to $\natural^{g-1} S^1 \times B^3$, where $g$ is the genus of the multisection. By definition, this multisection is thin.
\end{proof}

Following \cite{BaySae17}, we say a local modification of a critical images which takes a critical image $C_0$ and produces a critical image $C_1$ is \emph{always realizable} if there is a smooth 1-parameter family of smooth maps $f_t: X^4 \to D^2$ such that the critical image of $f_0$ is $C_0$ and the critical image of $f_1$ is $C_1$. Three always realizable critical image modifications are shown in Figure \ref{fig:alwaysRealizeableMoves}. The first is the \emph{unsink} move, which takes a fold with index 1 and an index 2 critical point meeting at a cusp, and transforms it into a fold with no cusp together with a Lefschetz singularity. The second is the \emph{push} move, which moves a Lefschetz singularity over an indefinite fold in the direction of increasing fiber genus. The third is the \emph{wrinkle} move, which transforms a Lefschetz singularity into a fold with three cusps. The first and third moves were introduced by Lekeli in \cite{Lek09} and the second was introduced by Baykur in \cite{Bay09}. The reader is referred to these sources for the proofs that these moves are always realizable. For convenience, we will define a composition of these moves for radially monotonic Morse 2-functions. 

\begin{figure}
    \centering
    \includegraphics[scale=1]{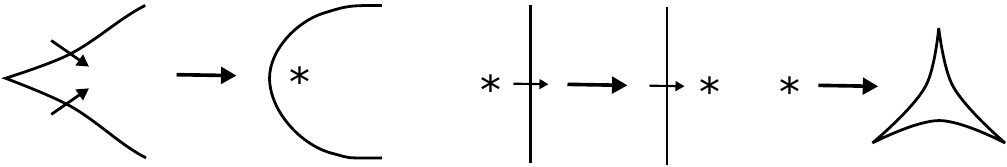}
    \caption{A collection of always realizable moves. Left: the unsink move replaces a cusp with a Lefschetz singularity. Middle: one can always push a Lefschetz singularity to the higher genus side of an indefinite fold. Right: a Lefschetz singularity can be wrinkled into a fold with three cusps.}
    \label{fig:alwaysRealizeableMoves}
\end{figure}

\begin{definition}
Let $f: X^4 \to D^2$ be a radially monotonic Morse 2-function and $c$ a cusp in $\textnormal{crit}(f)$. A \emph{UPW-move} (unsink-push-wrinkle) is the modification of $\textnormal{crit}(f)$ given by unsinking $c$, pushing the resulting Lefschetz singularity to the center of the disk, and wrinkling the singularity.
\end{definition}

Since the unsink, push, and wrinkle moves are always realizable, so is the UPW move. This move preserves the radial monotonicity of a stable function and hence takes a genus $g$ multisection to a genus $g+1$ multisection. Note that when $n>3$, we must choose how to distribute the three new cusps among the sectors. This does not change the manifold, but we may need to take extra care when we are interested in the resulting multisection.

\subsection{Decreasing the number of sectors and stable equivalence}

In this subsection, we will use the moves described above to relate any two multisections of a fixed 4-manifold. These moves can be used to decrease the number of sectors in a multisection, and so convert any multisection into a trisection. This is essential for our proof, since we will first convert a multisection into a trisection, and then use the stable equivalence of trisections \cite{GayKir16} to deduce the stable equivalence of multisections. We will first show how to decrease the number of sectors in a multisection; the reader may consult Figure \ref{fig:UPWonQuad} for an illustration of the argument.

\begin{proposition}\label{prop:turnIntoTri}
Let $n>3$, and suppose a 4-manifold $X$ admits an $n$-section of genus $g$ with sectors $X_1,\dots,X_n$. Then, $X$ admits an $(n-1)$-section of genus $(2g-k_1)$, where $X_1\cong \natural^{k_1}S^1\times B_3$.
\end{proposition}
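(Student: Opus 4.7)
The plan is to use the singularity-theoretic framework from this section: first convert the multisection into a radially monotonic Morse 2-function, then apply UPW moves to empty $X_1$ of cusps, and finally invoke Lemma~\ref{lem:removeRedundantSector} to discard the now-redundant sector.

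Concretely, I would begin by realizing the given $n$-section as a radially monotonic Morse 2-function $f : X \to D^2$, using the correspondence discussed before Proposition~\ref{prop:thinMultisections}. By the description of the wedge map for $\natural^{k_1} S^1 \times B^3$, the sector of $\operatorname{crit}(f)$ corresponding to $X_1$ contains exactly $g - k_1$ cusps (in addition to $k_1$ cuspless indefinite folds and the piece of the definite fold).

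Next I would perform a UPW move on each of these $g - k_1$ cusps, one at a time. Each UPW move is always realizable, preserves radial monotonicity, and increases the multisection genus by one; it also replaces a single cusp by a small fold circle carrying three new cusps that can be positioned freely in the disk. Since $n > 3$, there are at least three sectors other than $X_1$, so I can place the new fold circle so that its three cusps lie in three distinct sectors all different from $X_1$. Because these three cusps lie on a single new fold component, and that fold is disjoint from every other fold, the radial monotonicity condition (at most one cusp per sector per fold) is maintained. Iterating this over all $g - k_1$ cusps in $X_1$, I obtain a radially monotonic Morse 2-function inducing a multisection of genus
\[
g + (g - k_1) \;=\; 2g - k_1,
\]
in which the sector $X_1$ contains no cusps at all, hence is diffeomorphic to $\natural^{2g - k_1} S^1 \times B^3$.

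In the language of Lemma~\ref{lem:removeRedundantSector}, this new sector satisfies $k_1 = $ (new genus), so it is redundant. Applying that lemma removes $X_1$ and produces an $(n-1)$-section of genus $2g - k_1$, as desired. The main technical point to be careful about is the cusp-distribution step: one must verify that the wrinkle from each UPW move can be placed in the disk so that its three cusps simultaneously (i) avoid $X_1$, (ii) lie in three distinct sectors, and (iii) preserve radial monotonicity together with the cusps produced by previous UPW moves. This is where the hypothesis $n > 3$ is essential, and it is the only nontrivial obstacle; everything else follows from the always-realizability of the UPW move and a direct application of Lemma~\ref{lem:removeRedundantSector}.
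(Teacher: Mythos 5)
Your proposal is correct and follows essentially the same route as the paper's own proof: realize the multisection by a radially monotonic Morse 2-function, perform a UPW move on each of the $g-k_1$ cusps in the sector for $X_1$ (distributing the three new cusps among the other sectors, which is where $n>3$ is used), and then remove the now cusp-free sector via Lemma~\ref{lem:removeRedundantSector}. The genus count $g+(g-k_1)=2g-k_1$ and the appeal to the redundant-sector lemma match the paper exactly.
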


\begin{proof}
Let $f:X\to D^2$ be a Morse 2-function inducing the given multisection, so that $X_1$ is realized as the inverse image of a sector of $D^2$. This sector contains $g$ folds, and $g-k_1$ of them have cusps. Perform UPW moves on each of these cusps, distributing the three new cusps to $X_2, X_3,$ and $X_4$ each time (since $n>3$, there are at least three available sectors). Since we have done $g-k_1$ UPW moves, the resulting multisection has genus $g+(g-k_1) = 2g - k_1$ as desired. Furthermore, the resulting critical image set in $X_1$ now has no cusps. By Lemma \ref{lem:removeRedundantSector} this sector can be contracted into an adjacent sector, turning the original $n$-section into an $(n-1)$-section.
\end{proof}

\begin{figure}
    \centering
    \includegraphics[scale=.16]{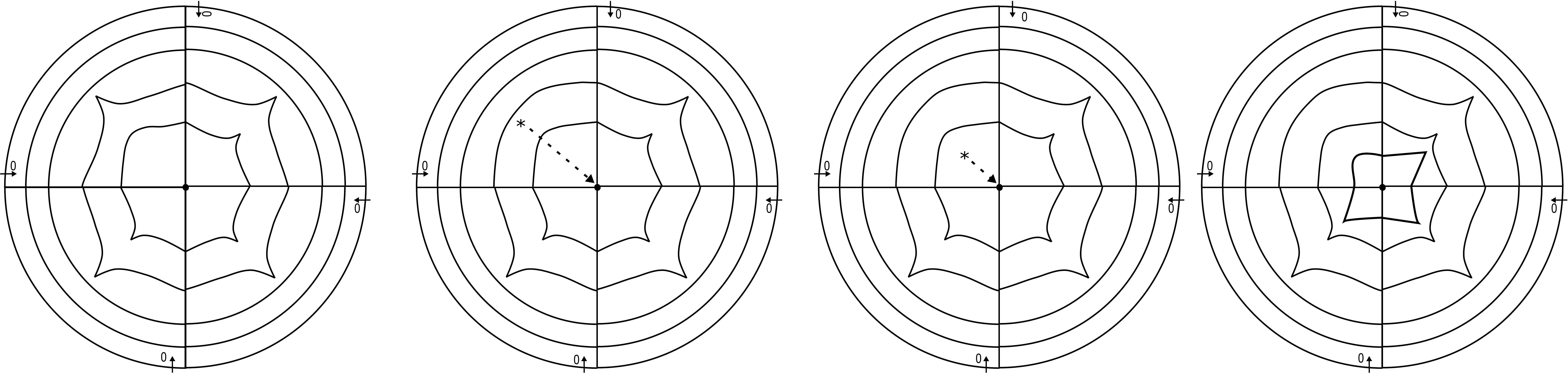}
    \caption{The sequence of moves making up a UPW move. Here, it is used to remove the cusp in the top left sector, leaving this sector with no cusps in the final critical image. A sector with no cusps can be contracted into either adjacent sector, producing a multisection with fewer sectors (in this case, a trisection).}
    \label{fig:UPWonQuad}
\end{figure}

In what follows, we will use the UPW move, as well as the stabilization move in Definition \ref{def:Stabilization}. We can a diagrammatic definition, but it also may be realized as a modification of a stable map. In \cite{GayKir16} (see Figure 13), Gay and Kirby realize this stabilization operation as the introduction of an ``eye,'' corresponding to a birth and death of a cancelling pair of indefinite critical points. We conclude with a notion of equivalence between any two multisections of a given 4-manifold.

\begin{theorem}
Let $X$ be a smooth, oriented, closed, and connected 4-manifold. Any two multisections of $X$ are related by a sequence of UPW moves, stabilizations, and isotopy through multisections.
\end{theorem}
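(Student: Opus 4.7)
The plan is to reduce the problem to the stable equivalence of trisections, established by Gay and Kirby \cite{GayKir16}. Given any two multisections $M_1$ and $M_2$ of $X$, I will show each is equivalent (under the allowed moves) to a trisection of $X$, and then appeal to the trisection stable equivalence theorem.

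For the reduction step, start with a multisection $M$ of $X$ with $n > 3$ sectors. Apply Proposition \ref{prop:turnIntoTri} iteratively: perform UPW moves on every cusp lying in a chosen sector, then contract the resulting cusp-free sector into an adjacent one via Lemma \ref{lem:removeRedundantSector}. The UPW moves are among the allowed moves by definition, and the contraction corresponds, in the radially monotonic Morse $2$-function picture, to sweeping a radial line across a region of the disk whose interior contains no singularities other than the boundary definite fold---an isotopy of the stable map through radially monotonic Morse $2$-functions which merges two adjacent sectors. Iterating produces a trisection $T_M$ of $X$ obtained from $M$ by UPW moves and isotopies through multisections.

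For the trisection step, let $T_1 = T_{M_1}$ and $T_2 = T_{M_2}$. By the Gay--Kirby stable equivalence theorem, $T_1$ and $T_2$ are related by a sequence of trisection stabilizations. A trisection stabilization---adding a standard genus-one summand distributed among the three sectors as in \cite{GayKir16}---is exactly the $n = 3$ instance of the stabilization of Definition \ref{def:Stabilization}, since both operations amount to taking the connected sum with the genus-one multisection of $S^4$ whose two new curves intersect once and are distributed across consecutive sectors. Concatenating the equivalences $M_1 \sim T_1 \sim T_2 \sim M_2$ completes the argument.

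The delicate step is justifying the contraction as an isotopy through multisections, since strictly speaking it changes the sector count $n$, whereas an isotopy might be expected to preserve the combinatorial type of the decomposition. The cleanest resolution is the stable-map viewpoint of Section \ref{sec:StableMaps}: once a sector's interior is free of cusps (hence of all singularities other than the arc of indefinite fold on its boundary), collapsing the angle at the central point of the disk between its two bounding radial lines is a continuous $1$-parameter family of radially monotonic Morse $2$-functions, and the induced deformation of multisections is the desired isotopy. Alternatively, one may note that contraction is the inverse of a trivial expansion in the sense of Definition \ref{def:expansion}, which can be realized by the birth of a cancelling pair of folds (the ``eye'' modification of \cite{GayKir16}) and hence folds into the stabilization and isotopy moves already listed.
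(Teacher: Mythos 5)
Your proposal is correct and follows essentially the same route as the paper: reduce both multisections to trisections via Proposition \ref{prop:turnIntoTri} (UPW moves plus contraction of the resulting cusp-free sector), then invoke the Gay--Kirby stable equivalence theorem for trisections. Your extra care in justifying the contraction step as an isotopy of radially monotonic Morse $2$-functions, and in identifying trisection stabilization with the $n=3$ case of Definition \ref{def:Stabilization}, addresses points the paper leaves implicit, but the argument is the same.
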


\begin{proof}
Begin with two arbitrary multisections of $X$, with sectors $X_1,\dots,X_n$ and $Y_1,\dots,Y_m$. If either multisection has more than three sectors, we may use Proposition \ref{prop:turnIntoTri} to decrease the number of sectors until each is a trisection. By Theorem 11 of \cite{GayKir16}, these trisections are related by a sequence of stabilizations and isotopies.
\end{proof}

\bibliographystyle{alpha}

\end{document}